\begin{document}
\setlength{\baselineskip}{14pt}

\parindent 0.5cm
\evensidemargin 0cm \oddsidemargin 0cm \topmargin 0cm \textheight 22cm \textwidth 16cm \footskip 2cm \headsep
0cm

\newtheorem{theorem}{Theorem}[section]
\newtheorem{lemma}{Lemma}[section]
\newtheorem{proposition}{Proposition}[section]
\newtheorem{definition}{Definition}[section]
\newtheorem{example}{Example}[section]
\newtheorem{corollary}{Corollary}[section]

\newtheorem{remark}{Remark}[section]

\numberwithin{equation}{section}

\def\p{\partial}
\def\I{\textit}
\def\R{\mathbb R}
\def\C{\mathbb C}
\def\u{\underline}
\def\l{\lambda}
\def\a{\alpha}
\def\O{\Omega}
\def\e{\epsilon}
\def\ls{\lambda^*}
\def\D{\displaystyle}
\def\wyx{ \frac{w(y,t)}{w(x,t)}}
\def\imp{\Rightarrow}
\def\tE{\tilde E}
\def\tX{\tilde X}
\def\tH{\tilde H}
\def\tu{\tilde u}
\def\d{\mathcal D}
\def\aa{\mathcal A}
\def\DH{\mathcal D(\tH)}
\def\bE{\bar E}
\def\bH{\bar H}
\def\M{\mathcal M}
\renewcommand{\labelenumi}{(\arabic{enumi})}

\def\disp{\displaystyle}
\def\undertex#1{$\underline{\hbox{#1}}$}
\def\card{\mathop{\hbox{card}}}
\def\sgn{\mathop{\hbox{sgn}}}
\def\exp{\mathop{\hbox{exp}}}
\def\OFP{(\Omega,{\cal F},\PP)}
\newcommand\JM{Mierczy\'nski}
\newcommand\RR{\ensuremath{\mathbb{R}}}
\newcommand\CC{\ensuremath{\mathbb{C}}}
\newcommand\QQ{\ensuremath{\mathbb{Q}}}
\newcommand\ZZ{\ensuremath{\mathbb{Z}}}
\newcommand\NN{\ensuremath{\mathbb{N}}}
\newcommand\PP{\ensuremath{\mathbb{P}}}
\newcommand\abs[1]{\ensuremath{\lvert#1\rvert}}

\newcommand\normf[1]{\ensuremath{\lVert#1\rVert_{f}}}
\newcommand\normfRb[1]{\ensuremath{\lVert#1\rVert_{f,R_b}}}
\newcommand\normfRbone[1]{\ensuremath{\lVert#1\rVert_{f, R_{b_1}}}}
\newcommand\normfRbtwo[1]{\ensuremath{\lVert#1\rVert_{f,R_{b_2}}}}
\newcommand\normtwo[1]{\ensuremath{\lVert#1\rVert_{2}}}
\newcommand\norminfty[1]{\ensuremath{\lVert#1\rVert_{\infty}}}

\title{On Principal Spectrum Points/Principal Eigenvalues of Nonlocal Dispersal
Operators and Applications}

\author{
Wenxian Shen\thanks{Partially supported by NSF grant DMS--0907752} \,\, and\,\, Xiaoxia Xie \\
Department of Mathematics and Statistics\\
Auburn University\\
Auburn University, AL 36849\\
U.S.A. }

\date{}
\maketitle

\noindent {\bf Abstract.}
This paper is to investigate the dependence of the principal spectrum points of nonlocal dispersal operators on underlying parameters and to consider its applications.
In particular, we study the effects of the spatial inhomogeneity, the dispersal rate, and the dispersal distance on the existence of the principal eigenvalues,
the magnitude of the principal spectrum points, and the asymptotic behavior of the principal spectrum points of nonlocal dispersal operators with Dirichlet type,
 Neumann type, and periodic boundary conditions in a unified way. We also discuss the applications of the principal spectral theory of nonlocal dispersal operators
  to the asymptotic dynamics of two species competition systems with nonlocal dispersal.

\bigskip

\noindent {\bf Key words.} Nonlocal dispersal operator,  principal spectrum point, principal eigenvalue, spatial inhomogeneity, dispersal rate, dispersal distance, two species
competition system.
\bigskip

\noindent {\bf Mathematics subject classification.} 45C05, 45M05, 45M20, 47G10, 92D25.

\newpage

\section{Introduction}
\setcounter{equation}{0}

The current paper is devoted to the study of principal spectrum of the following three eigenvalue
problems associated to nonlocal dispersal operators,
\begin{equation}
\label{dirichlet-op} \nu_1\left[\int_D k(y-x)u(y)dy-u(x)\right]+a_1(x)u=\lambda
u(x),\quad\quad \quad x\in \bar D,
\end{equation}
where $D\subset \RR^N$ is a smooth bounded domain,
\begin{equation}
\label{neumann-op} \nu_2\int_D k(y-x)[u(y)-u(x)]dy+a_2(x)u(x)=\lambda
u(x),\quad\quad \,     x  \in \bar D,
\end{equation}
where $D\subset \RR^N$ is as in \eqref{dirichlet-op},
and
\begin{equation}
\label{periodic-op}
\begin{cases}
\nu_3[\int_{\RR^N} k(y-x)u(y)dy-u(x)]+a_3(x)u(x)=\lambda u(x),\quad
&x\in \RR^N, \cr u(x+p_j{\bf e_j})=u(x),\quad & x\in \RR^N,
\end{cases}
\end{equation}
where $p_j>0$, ${\bf e_j}=(\delta_{j1},\delta_{j2},\cdots,\delta_{jN})$
with $\delta_{jk}=1$ if $j=k$ and $\delta_{jk}=0$ if $j\not =k$, and
$a_3(x+p_j{\bf e_j})=a_3(x)$, $j=1,2,\cdots,N$. In  \eqref{dirichlet-op}, \eqref{neumann-op}, and \eqref{periodic-op},
$k(\cdot)$ is a nonnegative $C^1$ function with compact support, $k(0)>0$, and $\int_{\RR^N} k(z)dz=1$.

Observe that the nonlocal dispersal operators in \eqref{dirichlet-op}, \eqref{neumann-op}, and \eqref{periodic-op},  that is,
$u(x)\mapsto \int_D k(y-x)u(y)dy-u(x)$, $u(x)\mapsto \int_D k(y-x)[u(y)-u(x)]dy$, and $u(x)\mapsto \int_{\RR^N} k(y-x)u(y)dy-u(x)$,
 can be viewed as
$u(x)\mapsto \int_{\RR^N}k(y-x)[u(y)-u(x)]dy$ with Dirichlet type boundary condition $\int_{\RR^N\setminus D}k(y-x)u(y)dy=0$ for
$x\in\bar D$,
$u(x)\mapsto \int_{\RR^N} k(y-x)[u(y)-u(x)]dy$ with Neumann type boundary condition $\int_{\RR^N\setminus D}k(y-x)[u(y)-u(x)]dy =0$ for $x\in\bar D$,
and $u(x)\mapsto \int_{\RR^N} k(y-x)[u(y)-u(x)]dy$ with periodic boundary condition $u(x+p_j{\bf e_j})=u(x)$ for $x\in\RR^N$,  respectively.

Observe also that
the eigenvalue problems \eqref{dirichlet-op}, \eqref{neumann-op}, and \eqref{periodic-op} can be viewed as  the nonlocal
 counterparts of the following eigenvalue problems associated to random dispersal operators,
\begin{equation}
\label{random-dirichlet-op}
\begin{cases}
\nu_1\Delta u(x)+a_1(x)u(x)=\lambda u(x), \quad & x\in D, \cr u(x)=0, \quad & x\in \p D,
\end{cases}
\end{equation}
\begin{equation}
\label{random-neumann-op}
\begin{cases}
\nu_2\Delta u(x)+a_2(x)u(x)=\lambda u(x),\quad & x\in D, \cr \frac{\p u}{\p
n}(x)=0, \quad & x\in \p D,
\end{cases}
\end{equation}
and
\begin{equation}
\label{random-periodic-op}
\begin{cases}
\nu_3\Delta u(x)+a_3(x)u(x)=\lambda u(x), \quad & x\in \RR^N, \cr u(x+p_j{\bf
e_j})=u(x), \quad & x\in \RR^N,
\end{cases}
\end{equation}
respectively.   In \cite{ShXi1}, the authors of the current paper will explore the relations between \eqref{dirichlet-op} and \eqref{random-dirichlet-op}
(resp. \eqref{neumann-op} and \eqref{random-neumann-op}, \eqref{periodic-op} and \eqref{random-periodic-op})  and prove that
the principal eigenvalues of \eqref{random-dirichlet-op}, \eqref{random-neumann-op},
and \eqref{random-periodic-op}  can be approximated
by the principal spectrum points of \eqref{dirichlet-op}, \eqref{neumann-op}, and \eqref{periodic-op}
with properly rescaled kernels, respectively  (see Definition \ref{pev-def} for the definition of
principal spectrum points of  \eqref{dirichlet-op}, \eqref{neumann-op}, and \eqref{periodic-op}).

The nonlocal dispersal operator $u(x)\mapsto \int_{\RR^N} k(y-x)[u(y)-u(x)]dy$ with Dirichlet type or Neumann type or periodic  boundary condition and
the random dispersal operator $u(x)\mapsto \Delta u(x)$ with  Dirichlet or Neumann or periodic  boundary condition are  widely used to model diffusive systems in
applied sciences.  In particular, the random dispersal operator $u(x)\mapsto \Delta u(x)$ with proper boundary condition is usually adopted when the organisms
in a diffusive system move randomly between the adjacent spatial locations.  Nonlocal dispersal operator such
as $u(x) \mapsto  \int_{\RR^N}k(y-x)[u(y)-u(x)]dy$ is applied when
 diffusive
systems  exhibit long range internal interactions (see \cite{Fif}, \cite{GrHiHuMiVi}, \cite{HuMaMiVi}). Here if there is $\delta>0$ such that
${\rm supp}(k(\cdot))\subset B(0,\delta):=\{z\in\RR^N\,|\, \|z\|<\delta\}$ and for any $0<\tilde\delta<\delta$, ${\rm supp}(k(\cdot))\cap \Big( B(0,\delta)\setminus B(0,\tilde\delta)\Big)\not
=\emptyset$, $\delta$ is called  the {\it  dispersal distance} of the nonlocal dispersal operator  $u(x)\mapsto  \int_{\RR^N}k(y-x)[u(y)-u(x)]dy$.
  As a basic technical tool
for the study of nonlinear evolution equations with random and
nonlocal dispersals, it is of great importance to investigate
aspects of  spectral theory for random and nonlocal dispersal
operators.

The eigenvalue problems \eqref{random-dirichlet-op}, \eqref{random-neumann-op}, and \eqref{random-periodic-op},
and in particular, their associated principal eigenvalue problems, are well understood. For example,
it is known that the largest real part, denoted by $\lambda_{R,1}(\nu_1,a_1)$, of the spectrum set of \eqref{random-dirichlet-op}
 is an isolated algebraically simple eigenvalue of \eqref{random-dirichlet-op}
with a positive eigenfunction, and for any other  $\lambda$ in the spectrum set  of \eqref{random-dirichlet-op},
${\rm Re}\lambda<\lambda_{R,1}(\nu_1,a_1)$ ($\lambda_{R,1}(\nu_1,a_1)$ is called the {\it principal eigenvalue} of \eqref{random-dirichlet-op}).
Similar properties hold for the largest real parts, denoted by $\lambda_{R,2}(\nu_2,a_2)$ and $\lambda_{R,\nu_3}(\nu_3,a_3)$, of the
spectrum sets of
\eqref{random-neumann-op} and \eqref{random-periodic-op}.

The principal eigenvalue problems \eqref{dirichlet-op}, \eqref{neumann-op}, and \eqref{periodic-op} have also been  studied recently  by many
people  (see \cite{Cov}, \cite{GaRo}, \cite{HeShZh}, \cite{KaLoSh1},
\cite{ShZh0}, \cite{ShXi1}, and references therein).
Let $\tilde\lambda_1(\nu_1,a_1)$ (resp. $\tilde\lambda_2(\nu_2,a_2)$, $\tilde\lambda_3(\nu_3,a_3)$)  be the largest real part of the spectrum set of \eqref{dirichlet-op}
(resp. \eqref{neumann-op}, \eqref{periodic-op}).  $\tilde\lambda_1(\nu_1,a_1)$ (resp. $\tilde\lambda_2(\nu_2,a_2)$, $\tilde\lambda_3(\nu_3,a_3)$) is called the
{\it principal spectrum point} of \eqref{dirichlet-op} (resp. \eqref{neumann-op}, \eqref{periodic-op}). $\tilde\lambda_1(\nu_1,a_1)$ (resp. $\tilde\lambda_2(\nu_2,a_2)$, $\tilde\lambda_3(\nu_3,a_3)$) is also called the
{\it principal eigenvalue} of \eqref{dirichlet-op} (resp. \eqref{neumann-op}, \eqref{periodic-op}) if it is an isolated algebraically simple
 eigenvalue of  \eqref{dirichlet-op} (resp. \eqref{neumann-op}, \eqref{periodic-op})
with a positive eigenfunction (see Definition \ref{pev-def} for detail).
It is known that a nonlocal dispersal operator may not have a principal
eigenvalue (see \cite{Cov}, \cite{ShZh0} for examples), which reveals some essential difference between nonlocal and random dispersal operators.
Some sufficient conditions are provided in \cite{Cov}, \cite{KaLoSh1},  and \cite{ShZh0} for the existence of principal eigenvalues of \eqref{dirichlet-op}, \eqref{neumann-op},
 and \eqref{periodic-op} (the conditions in \cite{Cov} apply to \eqref{dirichlet-op} and \eqref{neumann-op}, the conditions in \cite{KaLoSh1} apply to \eqref{dirichlet-op},
 and the conditions in \cite{ShZh0} apply to \eqref{periodic-op}). Such sufficient conditions have been found important in the study
 of nonlinear evolution equations with nonlocal dispersals (see \cite{Cov}, \cite{HeNgSh}, \cite{HeShZh}, \cite{KaLoSh1},
 \cite{KaLoSh2}, \cite{KoSh}, \cite{ShZh0}, \cite{ShZh1}, \cite{ShZh2}).
However, the understanding is still little to many interesting questions regarding the principal spectrum points/principal eigenvalues of nonlocal dispersal
operators, including the dependence of principal spectrum points or principal eigenvalues
(if exist) of nonlocal dispersal operators on the underlying parameters.

The  objective of the current paper is to investigate the dependence of the principal spectrum points of nonlocal dispersal operators  on the underlying parameters.
In particular, we  study the effects of the spatial inhomogeneity, the dispersal rate, and the dispersal distance on the existence of principal eigenvalues, on the magnitude  of the principal spectrum points, and  on the asymptotic behavior of the principal spectrum points of nonlocal dispersal operators with different types of boundary conditions
in a unified way. Among others, we obtain the following:

\smallskip

\noindent $\bullet$  {\it criteria for $\tilde\lambda_1(\nu_1,a_1)$ (resp. $\tilde\lambda_2(\nu_2,a_2)$, $\tilde\lambda_3(\nu_3,a_3)$)
to be the principal eigenvalue of \eqref{dirichlet-op} (resp. \eqref{neumann-op}, \eqref{periodic-op}) } (see Theorem \ref{spatial-effect-thm} (1), (2),
Theorem \ref{dispersal-rate-effect-thm} (3), and Theorem  \ref{dispersal-distance-effect-thm} (3) for detail);

\medskip

\noindent $\bullet$ {\it  lower bounds of $\tilde\lambda_i(\nu_i,a_i)$  in terms of $\hat a_i$, where $\hat a_i$ is the spacial average of $a_i(x)$ ($i=2,3$)}  (see  Theorem \ref{spatial-effect-thm}
(4) for detail);

\medskip

\noindent $\bullet$ {\it monotonicity of $\tilde\lambda_i(\nu_i,a_i)$ with respect to $a_i(x)$ and  $\nu_i$ ($i=1,2,3$)}  (see Theorem \ref{spatial-effect-thm} (5) and Theorem \ref{dispersal-rate-effect-thm} (1) for detail);

\medskip

\noindent $\bullet$ {\it  limits of $\tilde\lambda_i(\nu_i,a_i)$ as $\nu_i\to 0$ and $\nu_i\to\infty$ ($i=1,2,3$) } (see Theorem \ref{dispersal-rate-effect-thm} (4), (5) for detail);

\medskip

\noindent $\bullet$ {\it limits of $\tilde\lambda_i(\nu_i,a_i,\delta)$ as $\delta\to 0$ and $\delta\to\infty$ in the case $k(z)=\frac{1}{\delta^N}\tilde k(\frac{z}{\delta})$ and
$\tilde k(z)\ge 0$, ${\rm supp}(\tilde k)=B(0,1)$, $\int_{\RR^N}\tilde k(z)dz=1$, where $\tilde\lambda_i(\nu_i,a_i,\delta)=\tilde\lambda(\nu_i,a_i)$ ($i=1,2,3$)}
 (see  Theorem  \ref{dispersal-distance-effect-thm}
(1), (2) for detail).

\medskip

\noindent We also investigate the applications of   principal spectrum point properties of nonlocal dispersal operators to the asymptotic dynamics of the following
two species competition system,
\begin{equation}
\label{competition-system-eq}
\begin{cases}
u_t=\nu[\int_D k(y-x)u(t,y)dy -u(t,x)]+u f(x,u+v),\quad x\in\bar D,\cr
v_t=\nu \int_D k(y-x)[u(t,y)-u(t,x)]dy +v f(x,u+v),\quad x\in \bar D,
\end{cases}
\end{equation}
where $D$ and $k(\cdot)$ are as in \eqref{dirichlet-op} with $k(-z)=k(z)$  and $f(\cdot,\cdot)$ is a $C^1$ function satisfying that $\tilde \lambda_1(\nu,f(\cdot,0))>0$, $f(x,w)<0$ for $w\gg 1$, and
$\p_2 f(x,w)<0$ for $w>0$. \eqref{competition-system-eq} models the population dynamics of two competing species with the same local population dynamics
(i.e. the same growth rate function  $f(\cdot,\cdot)$), the same dispersal rate (i.e. $\nu$), but one species
 adopts nonlocal dispersal with Dirichlet type boundary condition and the other adopts nonlocal dispersal with Neumann type boundary condition,
 where  $u(t,x)$ and $v(t,x)$ are the population
densities of two species at time $t$ and space location $x$. We show

\medskip

\noindent $\bullet$ the species diffusing nonlocally  with Neumann type boundary condition drives the species diffusing nonlocally with Dirichlet type boundary condition extinct (see Theorem \ref{competition-system-thm} for detail).
\medskip

Nonlocal evolution equations have been attracting more and more attentions due to the presence
of nonlocal interactions in many diffusive systems in applied sciences.
The reader is referred to \cite{BaCh},  \cite{BaZh}, \cite{ChChRo},  \cite{Che}, \cite{CoElRoWo}, \cite{CoElRo}, \cite{Cov1},
\cite{CoDu}, \cite{CoDaMa}, \cite{GaRo}, \cite{HuShVi},
\cite{KaLoSh1}, \cite{KaLoSh2}, \cite{LiSuWa}, \cite{LvWa}, \cite{PaLiLi}, \cite{ShVi}, etc.  for the study of various aspects of nonlocal dispersal equations.

The rest of this paper is organized as follows. In section 2, we  introduce some standing notations, definitions, and state the main results of the paper (i.e.
Theorems \ref{spatial-effect-thm}-\ref{competition-system-thm}).
In section 3, we present some preliminary materials to be used in the proofs of the main theorems in later sections.
We investigate the effects of spatial variation on the principal spectrum points of nonlocal dispersal operators and prove Theorem  \ref{spatial-effect-thm}
in section 4. In section 5, we consider the effects of dispersal rate   on the principal spectrum points of nonlocal dispersal operators and prove Theorem  \ref{dispersal-rate-effect-thm}.
In  section 6, we explore the effects of dispersal distance on the principal spectrum points of nonlocal dispersal operators and prove Theorem  \ref{dispersal-distance-effect-thm}.
In the last section, we consider the asymptotic dynamics of \eqref{competition-system-eq} by applying some of the principal spectrum point properties of nonlocal dispersal
operators and prove Theorem \ref{competition-system-thm}.

\section{Notations, Definitions and Main Results}

Let
\begin{equation}
\label{x-d-space}
X_i=C(\bar D)
\end{equation}
with norm $\|u\|_{X_i}=\max_{x\in\bar D}|u(x)|$ for $i=1,2$,
\begin{equation}
\label{x-d-positive-cone}
X_i^+=\{u\in X_i\,|\, u(x)\geq 0,\quad x\in\bar D\},\quad i=1,2,
\end{equation}
and \begin{equation}
\label{x-d-positive-interior}
X_i^{++}={\rm Int} (X_i^+)=\{u\in X_i^+\,|\, u(x)>0,\quad x\in \bar D\},\quad i=1,2.
\end{equation}
Let
\begin{equation}
\label{x-p-space}
X_3=\{u\in C(\RR^N,\RR)\,|\, u(x+p_j{\bf e_j})=u(x),\quad x\in\RR^N,j=1,2,\cdots,N\}
\end{equation}
with norm $\|u\|_{X_3}=\max_{x\in\RR^N}|u(x)|$,
\begin{equation}
\label{x-p-positive-cone}
X_3^+=\{u\in X_3\,|\, u(x)\geq 0,\quad x\in\RR^N\},
\end{equation}
and
\begin{equation}
\label{x-p-positive-interior}
X_3^{++}={\rm Int}(X_3^+)=\{u\in X_3^+\,|\, u(x)>0,\quad x\in\RR^N\}.
\end{equation}

Let
\begin{equation}
\label{K-D-op}
\mathcal{K}_i: X_i\to X_i,\,\, (\mathcal{K}_iu)(x)=\int_D k(y-x)u(y)dy\quad \forall u\in X_i,\quad i=1,2,
\end{equation}
and
\begin{equation}
\label{K-P-op}
\mathcal{K}_3:X_3\to X_3,\,\, (\mathcal{K}_3u)(x)=\int_{\RR^N}k(y-x)u(y)dy\quad\forall u\in X_3.
\end{equation}

Observe that $X_2=X_1$ and $\mathcal{K}_2=\mathcal{K}_1$. The introduction of $X_2$ and $\mathcal{K}_2$ is for convenience.
Throughout the paper, $\mathcal{I}$ denotes the identity map in the space under consideration.

Let
\begin{equation}
\label{h-i-eq}
\begin{cases}
h_1(x)=-\nu_1+a_1(x),\cr h_2(x)=-\nu_2\int_D k(y-x)dy+a_2(x),\cr
h_3(x)=-\nu_3+a_3(x).
\end{cases}
\end{equation}
So, we have
\begin{equation}
\label{K-P-op}
h_i(\cdot)\mathcal {I}:X_i\rightarrow X_i, (h_i(\cdot)\mathcal{I}u)(x)=h_i(x)u(x) \quad \forall u\in X_i,\quad i=1,2, 3,
\end{equation}
where  $a_i\in X_i$, $i=1,2,3$.
And $a_i(\cdot)\mathcal I$ has the same meaning as in \eqref{K-P-op} with
$h_i(\cdot)$ being replaced by $a_i(\cdot)$.

In the following, for \eqref{periodic-op}, we put
\begin{align}
\label{periodic-domain-eq}
D=[0,p_1]\times [0,p_2]\times\cdots\times  [0,p_N].
\end{align}
For given $a_i\in X_i$, let
\begin{equation}
\label{spacial-average}
\hat a_i=\frac{1}{|D|}\int_D a_i(x)dx, \,\ i=1, 2, 3,
\end{equation}
where $|D|$ is the Lebesgue measure of $D$.
Let
$$
a_{i,\max}=\max_{x\in\bar D}a_i(x),\quad a_{i,\min}=\min_{x\in\bar D} a_i(x),
$$
and
$$
h_{i,\max}=\max_{x\in\bar D}h_i(x),\quad h_{i,\min}=\min_{x\in\bar D} h_i(x).
$$

Let $\sigma(\nu_i\mathcal{K}_i+h_i(\cdot)\mathcal{I})$  be the
spectrum of $\nu_i\mathcal{K}_i+h_i(\cdot)\mathcal{I}$ for $i=1,2,3$
and
\begin{equation} \label{tilde-lambda-eq}
 \tilde \lambda_i(\nu_i,a_i)=\sup\{{\rm Re}\mu\,|\,
\mu\in \sigma(\nu_i\mathcal{K}_i+h_i(\cdot)\mathcal{I})\},\quad
i=1,2,3.
\end{equation}

\begin{definition}
\label{pev-def}
Let $1\le i\le 3$ be given.
\begin{itemize}
\item[(1)] $\tilde\lambda_i(\nu_i,a_i)$ defined in \eqref{tilde-lambda-eq} is called the  {\rm principal spectrum point} of
$\nu_i\mathcal{K}_i+h_i(\cdot)\mathcal{I}$.

\item[(2)]   A real number
$\lambda_i(\nu_i,a_i)\in \RR$  is called the {\rm principal
eigenvalue} of $\nu_i\mathcal{K}_i+h_i(\cdot)\mathcal{I}$  if it is
an isolated algebraically simple eigenvalue of
$\nu_i\mathcal{K}_i+h_i(\cdot)\mathcal{I}$ with a positive
eigenfunction and for any $\mu\in
\sigma(\nu_i\mathcal{K}_i+h_i(\cdot)\mathcal{I})\setminus\{\lambda_i(\nu_i,a_i)\}$,
${\rm Re}\mu <\lambda_i(\nu_i,a_i)$.
\end{itemize}
\end{definition}

Observe that   $\tilde\lambda_i(\nu_i,a_i)\in
\sigma(\nu_i\mathcal{K}_i+h_i(\cdot)\mathcal{I})$ (see Proposition \ref{belong-prop}). Observe also that if $\lambda_i(\nu_i,a_i)$ exists ($1\leq i\leq 3$),
then
$$
\lambda_i(\nu_i,a_i)=\tilde\lambda_i(\nu_i,a_i).
$$

Consider \eqref{competition-system-eq}. By general semigroup theory, for any $(u_0,v_0)\in X_1\times X_2$, \eqref{competition-system-eq} has a
unique (local) solution $(u(t, x; u_0, v_0),v(t, x; u_0, v_0))$ with $(u(0, x; u_0, v_0),v(0, x; u_0, v_0))=(u_0(x), v_0(x))$.

The main results of the current paper  are stated in the following four theorems.

\begin{theorem}[Effects of spatial variation]
\label{spatial-effect-thm}

\begin{itemize}
\item[(1)]  {\rm (Existence of principal eigenvalues)} For given $1\leq i\leq 2$, $\lambda_i(\nu_i,a_i)$ exists if
$a_{i,\max}-a_{i,\min}<\nu_i \inf_{x\in\bar D}\int_D k(y-x)dy$.

\item[(2)] {\rm (Existence of principal eigenvalues)}
For given $1\leq i\leq 2$, $\lambda_i(\nu_i,a_i)$ exists
if $h_i(\cdot)$ is in $C^N(\bar D)$, there is some $x_0\in {\rm
Int}(D)$ satisfying that $h_i(x_0)=h_{i,\max}$, and  the partial
derivatives of $h_i(x)$ up to order $N-1$ at $x_0$ are zero.

\item[(3)] {\rm (Upper bounds)} For given $1\le i\le 3$ and $c_i\in\RR$, $\sup\{\tilde \lambda_i(\nu_i,a_i)\,|\, a_i\in X_i,\,\, \hat a_i=c_i\}=\infty$.

\item[(4)] {\rm (Lower bounds)}  Assume that $k(\cdot)$ is symmetric with respect to $0$ (i.e. $k(-z)=k(z)$) and $i=2$. For given  $c_i\in\RR$,
$$\inf\{\tilde \lambda_i(\nu_i,a_i)\,|\, a_i\in X_i,\,\, \hat a_i=c_i\}=\lambda_i(\nu_i,c_i)(=c_i)$$
 (hence
$\tilde\lambda_i(\nu_i,a_i)\ge\tilde\lambda_i(\nu_i,\hat a_i)$). If the principal eigenvalue  of $\nu_i\mathcal{K}_i+h_i(\cdot)\mathcal{I}$ exists,
then "$=$" holds if and only if $a_i(\cdot)$ is a constant function, that is $a_i(\cdot)\equiv \hat a_i$.

\item[(5)] {\rm (Monotonicity)} For given $a_i^1,a_i^2\in X_i$, if $a_i^1(x)\le a_i^2(x)$, then $\tilde\lambda_i(a_i^1,\nu_i)\le \tilde\lambda_i(a_i^2,\nu_i)$
$(i=1,2,3)$.

\end{itemize}
\end{theorem}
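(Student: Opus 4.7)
The unified strategy is to view each $\mathcal{L}_i:=\nu_i\mathcal{K}_i+h_i(\cdot)\mathcal{I}$ as a compact positive perturbation of a multiplication operator; by Weyl's theorem the essential spectrum of $\mathcal{L}_i$ equals the range of $h_i$, which lies in $[h_{i,\min},h_{i,\max}]$, so $\tilde\lambda_i(\nu_i,a_i)\ge h_{i,\max}$ and $\tilde\lambda_i$ is a principal eigenvalue precisely when it sits strictly above this essential spectrum. The workhorse is the fixed-point reformulation: for $\mu>h_{i,\max}$, $\mathcal{L}_i u=\mu u$ is equivalent to $u=T_{i,\mu}u$ with $T_{i,\mu}u(x):=\nu_i(\mu-h_i(x))^{-1}(\mathcal{K}_iu)(x)$, a compact positive operator on $X_i$ whose spectral radius $r(T_{i,\mu})$ is strictly decreasing in $\mu$. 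Existence of a principal eigenvalue thus reduces to producing a positive sub-eigenfunction, i.e.\ some $v\ge 0$ with $T_{i,\mu}v\ge v$ for a $\mu>h_{i,\max}$, whereupon $r(T_{i,\mu})\ge 1$ and Krein--Rutman supplies a positive eigenfunction and simplicity at the unique crossing value.

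For (1), I would test $v\equiv\mathbf{1}$: the inequality $T_{i,\mu}\mathbf{1}\ge\mathbf{1}$ reduces by algebra to $\mu\le a_{i,\min}+\nu_i\inf_x\int_D k(y-x)\,dy-\nu_i$ (for $i=1$) or $\mu\le a_{i,\min}$ (for $i=2$), and the hypothesis $a_{i,\max}-a_{i,\min}<\nu_i\inf_x\int_D k(y-x)\,dy$ ensures that such $\mu$ can be chosen strictly above $h_{i,\max}$. For (2), $C^N$ regularity with vanishing derivatives through order $N-1$ at an interior maximum $x_0$ permits a carefully scaled compactly supported bump $\phi_\varepsilon$ centered at $x_0$; a Taylor expansion shows $\mu-h_i(x)=O(|x-x_0|^N)+O(\mu-h_{i,\max})$ on the support, and balancing this decay against the convolution with $k$ yields $r(T_{i,\mu})>1$ for $\mu$ just above $h_{i,\max}$, following the pattern of \cite{Cov}, \cite{KaLoSh1}, \cite{ShZh0}.

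For (3), combining $\tilde\lambda_i(\nu_i,a_i)\ge h_{i,\max}$ with the explicit form of $h_i$ gives bounds like $\tilde\lambda_i\ge a_{i,\max}-\nu_i$ or $\tilde\lambda_i\ge a_{i,\max}-\nu_i\sup_x\int_D k(y-x)\,dy$, so concentrating large positive values of $a_i$ on a small subset while compensating with negative values elsewhere (preserving $\hat a_i=c_i$) drives $a_{i,\max}$ and hence $\tilde\lambda_i$ to $+\infty$. For (4), symmetry of $k$ renders $\mathcal{L}_2$ self-adjoint on $L^2(D)$ with quadratic form
\begin{equation*}
\langle\mathcal{L}_2 u,u\rangle_{L^2}=-\frac{\nu_2}{2}\int_D\!\int_D k(y-x)(u(y)-u(x))^2\,dy\,dx+\int_D a_2(x)u(x)^2\,dx.
\end{equation*}
Hence $\tilde\lambda_2(\nu_2,a_2)=\sup_{u\ne 0}\langle\mathcal{L}_2 u,u\rangle_{L^2}/\|u\|_{L^2}^2$; inserting $u\equiv 1$ gives $\tilde\lambda_2\ge\hat a_2=c_2$, while for $a_2\equiv c_2$ the constant $\mathbf{1}$ is a positive eigenfunction with eigenvalue $c_2$, identifying $\lambda_2(\nu_2,c_2)=c_2$. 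If a principal eigenvalue exists and equality holds, then $\mathbf{1}$ attains the Rayleigh supremum and is, by simplicity, the principal eigenfunction, forcing $a_2\equiv c_2$.

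For (5), $a_i^1\le a_i^2$ implies $h_i^1\le h_i^2$ in all three cases (the $\int k$ term in $h_2$ is independent of $a_2$), so after shifting by a large $M>0$ the operators $\mathcal{L}_i^j+M\mathcal{I}$ are bounded and positive on the cone $X_i^+$ with $\mathcal{L}_i^1+M\mathcal{I}\le\mathcal{L}_i^2+M\mathcal{I}$ in the pointwise order. Classical comparison of spectral radii for positive operators, combined with $\tilde\lambda_i^j=r(\mathcal{L}_i^j+M\mathcal{I})-M$ obtained from spectral mapping for the norm-continuous semigroup $e^{t\mathcal{L}_i^j}$, yields $\tilde\lambda_i^1\le\tilde\lambda_i^2$. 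The principal obstacle across the theorem is the delicate test-function construction in (2), where the flatness-of-order-$N$ condition at $x_0$ must be translated into a genuine spectral gap $\tilde\lambda_i>h_{i,\max}$ by a careful choice of localization scale balanced against the kernel convolution.
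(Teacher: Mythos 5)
Your overall architecture matches the paper's: reduce everything to the resolvent-type operators (your $T_{i,\mu}$ is exactly the paper's $V^i_{a_i,\nu_i,\mu}$), use $\tilde\lambda_i\ge h_{i,\max}$ from the essential-spectrum identification, and characterize existence of the principal eigenvalue via $\tilde\lambda_i>h_{i,\max}$ together with strict monotonicity of $\alpha\mapsto r(T_{i,\alpha})$. Your treatments of (1), (3), and (5) are essentially the paper's: for (1) the constant test function, for (3) blowing up $a_{i,\max}$ while keeping the spatial average fixed and invoking $\tilde\lambda_i\ge h_{i,\max}\ge -\nu_i+a_{i,\max}$, and for (5) a positive-operator monotonicity argument (the paper phrases it via the solution semigroup $\Phi_i(t)$, you via the shifted generator $\mathcal{L}_i+M\mathcal{I}$, but both are the Perron-type comparison of spectral radii of positive operators and are interchangeable).

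Part (4) is where you genuinely diverge, and for the better. The paper proves the lower bound $\tilde\lambda_2\ge\hat a_2$ by first assuming the principal eigenvalue exists, dividing the eigenvalue equation by the positive eigenfunction $u_2$, integrating over $D$, and symmetrizing to exhibit the nonnegative term $\frac{1}{2}\iint k(y-x)\frac{(u_2(y)-u_2(x))^2}{u_2(x)u_2(y)}\,dy\,dx$; when no principal eigenvalue exists, the paper then needs the approximation machinery of Lemma \ref{technical-lm} (produce $a_2^\epsilon$ satisfying the vanishing condition, apply the bound there, and pass to the limit using the $\pm\epsilon$ stability $\tilde\lambda_2(\nu_2,a_2\pm\epsilon)=\tilde\lambda_2(\nu_2,a_2)\pm\epsilon$). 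You instead plug $u\equiv|D|^{-1/2}$ directly into the Rayleigh-quotient formula of Proposition \ref{variation-characterization-prop}, which holds unconditionally for symmetric $k$ and immediately gives $\tilde\lambda_2\ge\hat a_2$ with no existence assumption and no approximation step. Your characterization of the equality case (the constant function attains the Rayleigh supremum, hence by simplicity must span the one-dimensional principal eigenspace, forcing $a_2(x)\equiv\hat a_2$ after plugging the constant into the eigenequation) is also sound and cleaner than the paper's verbal argument from \eqref{thm2-1-eq1}. This is a real simplification that uses tools the paper already sets up in Section 3.

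The one place you leave real work undone is (2). You correctly identify that the flatness-of-order-$N$ condition at $x_0$ and the dimension $N$ must interact, but you state only that a "carefully scaled bump" and "balancing decay against the convolution" will produce $r(T_{i,\mu})>1$ for $\mu$ slightly above $h_{i,\max}$, deferring to the literature. The quantitative crux — that $h_i(x_0)-h_i(y)\le M\|x_0-y\|^N$ forces $\int_{B(x_0,\sigma)}\frac{dy}{M\|x_0-y\|^N+\epsilon}\to\infty$ as $\epsilon\to 0^+$ (the integrand is $\sim\|y-x_0\|^{-N}$ in dimension $N$, a logarithmically divergent integral), which with a compactly supported indicator-type test function $v^*$ gives $U^i_{a_i,\nu_i,h_{i,\max}+\epsilon}v^*\ge\gamma v^*$ for some $\gamma>1$ once $\epsilon$ is small — is the whole content of the claim, and your sketch does not surface it. The paper's proof makes this explicit; as written your (2) is a plausibility argument rather than a proof.
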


\begin{remark}
\label{spatial-effect-rk}
\begin{itemize}
\item[(1)]  For the case $i=3$, similar result to Theorem \ref{spatial-effect-thm}(1)  is proved in \cite{ShZh0}.
To be more precise, it is proved in \cite{ShZh0} that if $a_{3,\max}-a_{3,\min}<\nu_3$, then $\lambda_3(\nu_3,a_3)$ exists.

\item[(2)]  For the case $i=3$, similar result to Theorem \ref{spatial-effect-thm}(2)  is  also proved in \cite{ShZh0}.
Actually  it is proved in \cite{ShZh0} that if $a_3(\cdot)$ is $C^N$ and there is $x_0\in\RR^N$ such that $a_3(x_0)=a_{3,\max}$ and the
partial derivatives of $a_3(x)$ up to order $N-1$ at $x_0$ are zero, then $\lambda_3(\nu_3,a_3)$ exists.

\item[(3)]
For one space dimensional random dispersal operators,  for given $c_i\in\RR$, $\sup\{\lambda_{R,i}(\nu_i,a_i)\,|\, a_i\in X_i^{++},\,\, \hat a_i=c_i\}<\infty$
(see Remark \ref{rk-1} for detail).
  Theorem \ref{spatial-effect-thm}(3)
hence reflects some difference between random dispersal operators and nonlocal dispersal operators.

\item[(4)] Similar result to Theorem \ref{spatial-effect-thm}(4) holds for $i=3$. To be more precise,
it is proved in   \cite{ShZh2} that for any given $c_3\in\RR$,
$$\inf\{\tilde \lambda_3(\nu_3,a_3)\,|\, a_3\in X_3,\,\, \hat a_3=c_3\}=\lambda_3(\nu_3,c_3)(=c_3).
$$ But
 Theorem \ref{spatial-effect-thm}(4) may not hold
  for the case $i=1$ (see Remark \ref{rk-1} for detail).
  \end{itemize}
\end{remark}

\begin{theorem}[Effects of dispersal rate]
\label{dispersal-rate-effect-thm}
 Assume that $1\le i\le 3$ and $k(\cdot)$ is symmetric with respect to $0$.
\begin{itemize}
\item[(1)] {\rm (Monotonicity)} Assume $a_i(\cdot)\not\equiv {\rm constant}$.   If $\nu_i^1<\nu_i^2$, then
$\tilde \lambda_i(\nu_i^1,a_i)>\tilde \lambda_i(\nu_i^2,a_i)$.

\item[(2)] {\rm (Existence of principal eigenvalue)}   If $i=1$ or $3$ and  $\lambda_i(\nu_i,a_i)$ exists for some $\nu_i>0$, then
$\lambda_i(\tilde\nu_i,a_i)$ exists for all $\tilde\nu_i>\nu_i$.

\item[(3)] {\rm (Existence of principal eigenvalue)}  There is $\nu_i^0>0$ such that  the principal eigenvalue $\lambda_i(\nu_i,a_i)$ of $\nu_i\mathcal{K}_i+h_i(\cdot)\mathcal{I}$
exists for $\nu_i>\nu_i^0$.

\item[(4)] {\rm (Limits as the dispersal rate goes to $0$)}
 $\lim_{\nu_i\to 0+}\tilde\lambda_i(\nu_i,a_i)=a_{i,\max}$.

 \item[(5)] {\rm (Limits as the dispersal rate goes to  $\infty$)}
 $\lim_{\nu_i\to\infty}\tilde\lambda_i(\nu_i,a_i)=-\infty$ for $i=1$ and $\lim_{\nu_i\to\infty}\tilde\lambda_i(\nu_i,a_i)=\hat a_i$ for $i=2$ and $3$.
\end{itemize}
\end{theorem}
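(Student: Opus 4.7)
The strategy rests on the Rayleigh-quotient characterisation of $\tilde\lambda_i$. Because $k(-z)=k(z)$, the operator $\nu_i\mathcal{K}_i+h_i(\cdot)\mathcal{I}$ is self-adjoint on $L^2(D)$ (with $D$ the periodic cell when $i=3$), so
\[
\tilde\lambda_i(\nu_i,a_i)=\sup_{u\ne 0}\frac{\langle(\nu_i\mathcal{K}_i+h_i\mathcal{I})u,u\rangle}{\|u\|^2},
\]
and a short computation rewrites the quadratic form as the $a_i$-piece plus a non-positive ``diffusion'' piece, equal to $-\tfrac{\nu_i}{2}\iint_{D\times D}k(y-x)[u(y)-u(x)]^2\,dy\,dx$ for $i=2,3$, and (after using $\int_{\R^N}k=1$) to the Dirichlet analogue $-\tfrac{\nu_1}{2}\iint k[u(y)-u(x)]^2-\nu_1\int u(x)^2\int_{\R^N\setminus D}k(y-x)\,dy\,dx$ for $i=1$. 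Compactness of $\mathcal{K}_i$ together with Weyl's theorem identifies the essential spectrum as $[h_{i,\min},h_{i,\max}]$, so $\lambda_i$ exists iff $\tilde\lambda_i>h_{i,\max}$.

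Parts (1)--(3) then follow directly from this formula. Monotonicity is immediate because the $\nu_i$-coefficient is non-positive; for the strict inequality in (1), I argue by contradiction: equality across $\nu_i^1<\nu_i^2$ forces a maximising sequence onto the kernel of the diffusion form, which consists of the constants for $i=2,3$ and of $\{0\}$ for $i=1$ (since the Dirichlet boundary loss $\int u^2\int_{\R^N\setminus D}k$ must also vanish), contradicting the bound $\tilde\lambda_i(\nu_i,a_i)>\hat a_i$ obtained by the explicit perturbation $u=1+\e(a_i-\hat a_i)$ whenever $a_i$ is non-constant. For part (2) I rewrite
\[
\tilde\lambda_i(\nu_i,a_i)-(a_{i,\max}-\nu_i)=\sup_u\frac{\nu_i\langle\mathcal{K}_iu,u\rangle+\int(a_i-a_{i,\max})u^2}{\|u\|^2};
\]
whenever this supremum is positive a witnessing $u_0$ must satisfy $\langle\mathcal{K}_iu_0,u_0\rangle>0$ (since the $a_i$-term is $\le 0$), and the expression is monotone increasing in $\nu_i$, so positivity persists. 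Part (3) follows by inserting $u\equiv 1$ (for $i=2,3$) or $u=\mathbf{1}_D$ (for $i=1$) into the same rewriting: the resulting quantity is linear and positively sloped in $\nu_i$, hence positive for large $\nu_i$.

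For part (4), the Rayleigh formula gives $\tilde\lambda_i\le a_{i,\max}$ by dropping the non-positive diffusion term; for the matching lower bound I test with $u_\e=\mathbf{1}_{B(x_0,\e)}$ at an interior point where $a_i(x_0)$ approximates $a_{i,\max}$, whose $a_i$-Rayleigh piece tends to $a_{i,\max}$ as $\e\to 0$ and whose diffusion piece is bounded by $c\nu_i$, yielding $\tilde\lambda_i\ge a_{i,\max}-c\nu_i$. For part (5) with $i=1$, subadditivity of the supremum gives $\tilde\lambda_1\le\nu_1(r(\mathcal{K}_1)-1)+a_{1,\max}$, and the decisive input $r(\mathcal{K}_1)<1$ I prove via Krein--Rutman by pairing $\mathcal{K}_1\phi=r\phi$ with the positive principal eigenfunction $\phi$ and integrating to obtain $r\int_D\phi=\int_D\phi\,\tilde J$ with $\tilde J(x)=\int_Dk(y-x)dy<1$ on the positive-measure set within distance $\delta$ of $\partial D$. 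For $i=2,3$, $\tilde\lambda_i\ge\hat a_i$ comes from $u\equiv 1$, and for the upper bound I invoke the nonlocal Poincar\'e inequality $\iint k(y-x)[u(y)-u(x)]^2\ge c_P\int(u-\hat u)^2$; a near-maximising sequence must then satisfy $\|u-\hat u\|\to 0$ as $\nu_i\to\infty$, forcing the quotient to $\hat a_i$. I expect the main obstacle to be establishing this nonlocal Poincar\'e inequality (and its Dirichlet sibling $r(\mathcal{K}_1)<1$); both rest on the $k$-connectedness of $\bar D$ (or the torus), i.e.\ the fact that the relation ``$k(y-x)>0$'' generates $\bar D$ in finitely many steps thanks to the compact support and strict positivity of $k$ near $0$ together with the geometry of $D$.
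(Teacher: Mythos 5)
Your proposal uses the same core tool as the paper---the $L^2$ variational characterization of $\tilde\lambda_i$ from Proposition \ref{variation-characterization-prop}---and for parts (1), (2) and the $i=2,3$ cases of (5) the argument is essentially the paper's. The tactical differences are real, though. For (3) and (5) with $i=1$ the paper exploits the rescaling $\tilde\lambda_1(\nu_1,a_1)=\nu_1\,\tilde\lambda_1(1,a_1/\nu_1)$ together with $\tilde\lambda_1(1,0)<0$ (Proposition \ref{most-basic-prop}), whereas you insert $u\equiv 1$ for (3) and prove $r(\mathcal K_1)<1$ by a Krein--Rutman integration for (5); both of yours are correct and are pleasant alternatives. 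For (4) the paper avoids the Rayleigh quotient entirely: its lower bound is simply $\tilde\lambda_i\ge h_{i,\max}\to a_{i,\max}$ from the essential spectrum, and its upper bound is a resolvent/operator-norm convergence argument ($\nu_i\mathcal K_i+h_i\mathcal I\to a_i\mathcal I$), which is precisely why Remark \ref{dispersal-rate-effect-rk}(2) records that (3) and (4) hold without symmetry of $k$---a robustness that your symmetric-form proof does not recover. The nonlocal Poincar\'e inequality you correctly flag as the main obstacle is exactly what the paper supplies in the proof of (5) for $i=2$: since $\tilde\lambda_2(1,0)=0$ is an isolated simple eigenvalue of $\mathcal K_2+b_0\mathcal I$ lying strictly above its essential spectrum, the self-adjoint operator $-\mathcal K_2-b_0\mathcal I$ is coercive on the mean-zero subspace $E_2$ (inequality \eqref{nu-2-eq4}), which is your $c_P$. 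Finally, your strict-monotonicity argument in (1)---in particular the observation that for $i=1$ the kernel of the Dirichlet form is $\{0\}$ because both the Poincar\'e piece and the boundary-loss piece $\int_D u^2(x)\bigl(1-\int_D k(y-x)\,dy\bigr)\,dx$ must vanish---fills in a step the paper leaves implicit, merely asserting that (1) ``follows from'' the displayed quadratic-form identities.
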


\begin{remark}
\label{dispersal-rate-effect-rk}
\begin{itemize}
\item[(1)] It is open whether Theorem \ref{dispersal-rate-effect-thm} (2) holds for the case $i=2$.

\item[(2)] Theorem \ref{dispersal-rate-effect-thm} (3) and (4) still  hold if $k(\cdot)$ is not symmetric.
\end{itemize}
\end{remark}
For given $\delta>0$ and $\tilde k(\cdot):\RR^N\to \RR^+$ satisfying that ${\rm
supp}(\tilde k)=B(0,1):=\{z\in\RR^N\,|\, \|z\|<1\}$ and
$\int_{\RR^N}\tilde k(z)dz=1$, let
\begin{equation}
\label{k-delta}
k_\delta(z)=\frac{1}{\delta^N} \tilde k\left(\frac{z}{\delta}\right).
\end{equation}
When $k(z)=k_\delta(z)$, to indicate the dependence of $\tilde\lambda_i(\nu_i,a_i)$ on $\delta$,
put
$$
\tilde\lambda_i(\nu_i,a_i,\delta)=\tilde\lambda_i(\nu_i,a_i).
$$

\begin{theorem}[Effects of dispersal distance]
\label{dispersal-distance-effect-thm}
 Suppose that $k(z)=k_\delta(z)$, where  $k_\delta(z)$ is defined as in \eqref{k-delta} and $\tilde k(z)=\tilde k(-z)$.
Let $1\le i\le 3$.

\begin{itemize}

\item[(1)] {\rm (Limits as dispersal distance goes to $0$)} $\lim_{\delta\to 0}\tilde\lambda_i(\nu_i,a_i,\delta)=a_{i,\max}$.

\item[(2)] {\rm (Limits as dispersal distance goes to $\infty$)}
$\lim_{\delta\to\infty}\tilde \lambda_1(\nu_1,a_1,\delta)=-\nu_1+a_{1,\max}$,
$\lim_{\delta\to\infty}\tilde\lambda_2(\nu_2,a_2,\delta)=a_{2,\max}$, and
$\lim_{\delta\to\infty}\tilde\lambda_3(\nu_3,a_3,\delta)=\bar\lambda_3(\nu_3,a_3)$, where
$$
\bar\lambda_3(\nu_3,a_3)=\max\{{\rm Re}\lambda\,|\, \lambda\in \sigma( \nu_3 \mathcal{\bar I}+h_3(\cdot)\mathcal{I})\},
$$
and
$$
\mathcal{\bar I}u=\frac{1}{|D|}\int_D u(x)dx.
$$

\item[(3)] {\rm (Existence of principal eigenvalue)}  There is $\delta_0>0$ such that the principal eigenvalue $\lambda_i(\nu_i,a_i)$ of $\nu_i\mathcal{K}_i+h_i(\cdot)\mathcal{I}$
exists for $0<\delta<\delta_0$.
\end{itemize}
\end{theorem}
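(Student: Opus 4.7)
My plan is to handle the three parts in order, since part (3) leans on the small-$\delta$ asymptotics of part (1); throughout, the operator-norm behaviour of $\mathcal{K}_i$ in the two regimes is the unifying theme. For part (1) I sandwich $\tilde\lambda_i(\nu_i,a_i,\delta)$ between $a_{i,\max}$ and $a_{i,\max}-o(1)$. For the uniform upper bound, monotonicity (Theorem~\ref{spatial-effect-thm}(5)) gives $\tilde\lambda_i(\nu_i,a_i,\delta)\le\tilde\lambda_i(\nu_i,a_{i,\max},\delta)$, and the operator at the constant coefficient $a_{i,\max}$ decomposes as $a_{i,\max}\mathcal{I}+\nu_i\mathcal{L}$ with $\mathcal{L}$ having top spectral point at most $0$: for $i=1$ because $\|\mathcal{K}_1\|_{C\to C}\le 1$, and for $i=2,3$ via the symmetrization identity
\[
\iint k_\delta(y-x)\bigl[u(y)-u(x)\bigr]u(x)\,dy\,dx = -\tfrac{1}{2}\iint k_\delta(y-x)\bigl[u(y)-u(x)\bigr]^2\,dy\,dx\le 0
\]
applied to the $L^2$-selfadjoint realisation (whose top coincides with that on $C$ by compact perturbation of the multiplication part). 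For the lower bound, fix $\varepsilon>0$, choose $x_0\in{\rm Int}(D)$ with $a_i(x_0)\ge a_{i,\max}-\varepsilon$, and take a smooth bump $\varphi$ supported in $B(x_0,r)\subset D$; for $\delta<r$, $\mathcal{K}_i\varphi(x)=\varphi(x)$ on $B(x_0,r-\delta)$, so $(\nu_i\mathcal{K}_i+h_i(\cdot)\mathcal{I})\varphi=a_i(\cdot)\varphi$ there, and $\varphi$ becomes a Weyl sequence for the spectral value $a_i(x_0)\ge a_{i,\max}-\varepsilon$.

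\textbf{Part (2).} The core ingredient is operator-norm convergence of $\mathcal{K}_i$ as $\delta\to\infty$. For $i=1,2$, $k_\delta\le\|\tilde k\|_\infty\delta^{-N}$ on the bounded domain $D$ gives $\|\mathcal{K}_i\|_{C\to C}\le\|\tilde k\|_\infty|D|\delta^{-N}\to 0$; combined with $\int_D k_\delta\to 0$ uniformly (used for $i=2$), this yields norm convergence $\nu_1\mathcal{K}_1+h_1\mathcal{I}\to(-\nu_1+a_1(\cdot))\mathcal{I}$ and $\nu_2\mathcal{K}_2+h_2\mathcal{I}\to a_2(\cdot)\mathcal{I}$, with top spectral points $-\nu_1+a_{1,\max}$ and $a_{2,\max}$ respectively. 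For $i=3$ the kernel retains total mass $1$ but spreads over many periods; applying Poisson summation to the periodized kernel $\sum_n k_\delta(\cdot+np)$ writes it as $|D|^{-1}\bigl(1+\sum_{m\ne 0}\hat{\tilde k}(\delta m/p)e^{2\pi i m\cdot z/p}\bigr)$, and rapid decay of $\hat{\tilde k}$ (from smoothness and compact support of $\tilde k$) drives this uniformly to $|D|^{-1}$, so $\mathcal{K}_3\to\bar{\mathcal{I}}$ in operator norm. Upper semicontinuity of spectrum under norm convergence gives $\limsup\tilde\lambda_i\le$ limit; the matching lower bound follows by transferring the explicit eigenfunction $u^\star(x)=c/(\bar\lambda_3-h_3(x))$ of the limit operator (well-defined since $\bar\lambda_3>h_{3,\max}$) through Weyl's criterion, with analogous delta-concentrated test functions handling $i=1,2$.

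\textbf{Part (3) and main obstacle.} Compactness of $\mathcal{K}_i:X_i\to X_i$ (continuous kernel on a bounded domain, or via its periodization for $i=3$) together with Weyl's theorem gives $\sigma_{\rm ess}(\nu_i\mathcal{K}_i+h_i(\cdot)\mathcal{I})=\overline{h_i(\bar D)}\subset(-\infty,h_{i,\max}]$, so whenever $\tilde\lambda_i>h_{i,\max}$ the principal spectrum point is an isolated eigenvalue of finite algebraic multiplicity, and a Krein--Rutman argument on the resolvent yields simplicity and a positive eigenfunction. For $i=1,3$ the gap $h_{i,\max}=-\nu_i+a_{i,\max}<a_{i,\max}$ is automatic; for $i=2$, $\tilde k(0)>0$, continuity of $\tilde k$, and the interior cone condition for the smooth domain $D$ yield $\int_D k_\delta(y-x)\,dy\ge c_0>0$ uniformly for $x\in\bar D$ and small $\delta$, whence $h_{2,\max}\le a_{2,\max}-\nu_2 c_0<a_{2,\max}$. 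Combining with Part~(1) ($\tilde\lambda_i\to a_{i,\max}$) yields $\tilde\lambda_i>h_{i,\max}$ for all sufficiently small $\delta$. The principal technical difficulty is in Part~(2) for $i=3$: establishing \emph{operator-norm} (not merely pointwise or Fourier-coefficientwise) convergence $\mathcal{K}_3\to\bar{\mathcal{I}}$, because only norm convergence lets the upper-semicontinuity argument isolate $\bar\lambda_3$ above the nontrivial continuous essential spectrum $\overline{h_3(\bar D)}$ of the limit operator.
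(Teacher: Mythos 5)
Your overall architecture for (1) and (3) tracks the paper's: a variational/symmetrization upper bound $\tilde\lambda_i\le a_{i,\max}$, a localized test function for the matching lower bound, and a uniform interior-cone estimate for $\int_D k_\delta(y-x)\,dy$ to separate $a_{2,\max}$ from $h_{2,\max}$. Two small corrections there. First, for a smooth bump $\varphi$ supported in $B(x_0,r)$ the identity $\mathcal K_i\varphi(x)=\varphi(x)$ is false (the kernel does not reproduce a non-constant function); what is true, and what the paper uses, is the mollification estimate $\|\mathcal K_i\varphi-\varphi\|_\infty\to 0$ as $\delta\to 0$, which is all you need once you also shrink $\mathrm{supp}\,\varphi$ so that $a_i$ is within $\varepsilon$ of $a_i(x_0)$ there. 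Second, in your Weyl framing keep in mind you are moving the operator, not the vector, so the argument is only clean in the $L^2$-self-adjoint realization (as you do note), where closeness of $\langle T_\delta\varphi,\varphi\rangle/\|\varphi\|^2$ to the target directly bounds $\tilde\lambda_i(\delta)$ from below.

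For (2), $i=1,2$ you reproduce the paper. For $i=3$ your route is \emph{genuinely different}: you aim for operator-norm convergence $\mathcal K_3\to\bar{\mathcal I}$, from which self-adjoint continuity of the top of the spectrum immediately gives $\tilde\lambda_3(\delta)\to\bar\lambda_3$ without any test-function construction. The paper instead never proves norm convergence of $\mathcal K_3$; it approximates $a_3$ by a potential satisfying the vanishing condition of Theorem~\ref{spatial-effect-thm}(2), uses the explicit function $\psi_3(x)=(\bar\lambda_3+\nu_3-a_3(x))\phi_3(x)$ in the $U$-operator framework, and only needs the pointwise averaging fact $\int\tilde k(z)g(x+\delta z)\,dz\to\hat g$ for continuous $g$. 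Your approach, if it goes through, is cleaner: it removes the separate upper/lower-bound steps, the $\epsilon$-approximation of $a_3$, and the explicit $\psi_3$ (which is undefined when $\bar\lambda_3=h_{3,\max}$, a degenerate case your $u^\star$-based lower bound silently excludes but self-adjoint norm-convergence would still cover).

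The genuine gap is in the justification of $\mathcal K_3\to\bar{\mathcal I}$ in operator norm. You invoke Poisson summation and ``rapid decay of $\hat{\tilde k}$ from smoothness and compact support.'' But the paper only assumes $\tilde k\in C^1_c$, for which $\hat{\tilde k}(\xi)=O(|\xi|^{-1})$; the resulting tail $\sum_{m\ne 0}|\hat{\tilde k}(\delta m/p)|$ is comparable to $\delta^{-1}\sum_{m\ne 0}|m|^{-1}$, which diverges in every dimension. So absolute convergence of the Fourier series does not follow from the stated decay, and the Poisson-summation estimate as written does not establish uniform convergence of the periodized kernel. The conclusion that $\sum_{n}\delta^{-N}\tilde k\bigl((z+Pn)/\delta\bigr)\to |D|^{-1}$ uniformly in $z$ is nonetheless correct, but the right justification is elementary equidistribution: for fixed $z$ this is $|D|^{-1}$ times a Riemann sum of $\tilde k$ over the lattice $(z+P\ZZ^N)/\delta$ with cell volume $|D|/\delta^N$, and since $\tilde k$ is uniformly continuous with compact support the Riemann-sum error is bounded by a constant times the modulus of continuity of $\tilde k$ at scale $\max_j p_j/\delta$, uniformly in the offset. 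Replace the rapid-decay claim by this Riemann-sum bound and the operator-norm convergence (hence all of Part (2), $i=3$) is rigorous.
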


\begin{remark}
\label{dispersal-distance-effect-rk}
\begin{itemize}
\item[(1)] For $i=1$ or $3$,  Theorem \ref{dispersal-distance-effect-thm} (1) is actually  proved in \cite[Theorem 2.6]{KaLoSh1}.

\item[(2)] For $i=1$ or $3$, Theorem \ref{dispersal-distance-effect-thm} (3) is  proved in  \cite{KaLoSh1} (see also \cite{ShZh0} for the case $i=3$).

\end{itemize}
\end{remark}

\begin{corollary}[Criteria for the existence of principal eigenvalues]
\label{critria-thm}
Let $1\le i\le 3$ be given.
\begin{itemize}
\item[(1)] $\lambda_i(\nu_i,a_i)$ exists provided that $\max_{x\in\bar
D}a_i(x)-\min_{x\in\bar D} a_i(x)<\nu_i\inf_{x\in\bar D}\int_D k(y-x)dy$ in the case $i=1,2$ and $\max_{x\in\bar
D}a_i(x)-\min_{x\in\bar D} a_i(x)<\nu_i$ in the case $i=3$.

\item[(2)]  $\lambda_i(\nu_i,a_i)$ exists provided that $h_i(\cdot)$ is in $C^N(\bar D)$, there is some $x_0\in {\rm
Int}(D)$ satisfying that $h_i(x_0)=h_{i,\max}$, and  the partial
derivatives of $h_i(x)$ up to order $N-1$ at $x_0$ are zero.

\item[(3)]  There is $\nu_i^0>0$ such that  the principal eigenvalue $\lambda_i(\nu_i,a_i)$ of $\nu_i\mathcal{K}_i+h_i(\cdot)\mathcal{I}$
exists for $\nu_i>\nu_i^0$.

\item[(4)] Suppose that $k(z)=k_\delta(z)$, where
$k_{\delta}(z)$ is defined as in \eqref{k-delta} and $\tilde k(\cdot)$ is symmetric with respect to $0$. Then there is $\delta_0>0$ such that the principal eigenvalue $\lambda_i(\nu_i,a_i)$ of $\nu_i\mathcal{K}_i+h_i(\cdot)\mathcal{I}$
exists for $0<\delta<\delta_0$.
\end{itemize}
\end{corollary}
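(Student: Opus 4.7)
The plan is to read off each of the four parts of the corollary as a direct consequence of the main theorems and remarks already stated in this section, without invoking any new computation. This is appropriate because the theorems have already packaged the relevant existence criteria; the corollary simply collects them into a single easy-to-reference statement.

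For part (1), when $i=1$ or $2$ the assertion is exactly Theorem \ref{spatial-effect-thm}(1). For $i=3$, the hypothesis becomes $a_{3,\max}-a_{3,\min}<\nu_3$, which is the sufficient condition recalled in Remark \ref{spatial-effect-rk}(1) and established in \cite{ShZh0}; the three cases together yield (1). For part (2), the case $i=1,2$ is literally Theorem \ref{spatial-effect-thm}(2). For $i=3$, we use the fact that $h_3(x)=-\nu_3+a_3(x)$ differs from $a_3(x)$ only by the additive constant $-\nu_3$, so the location of the maximum and the vanishing of the partial derivatives up to order $N-1$ for $h_3$ are equivalent to the corresponding conditions for $a_3$; the conclusion is then exactly the statement recalled in Remark \ref{spatial-effect-rk}(2) and proved in \cite{ShZh0}.

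Parts (3) and (4) are verbatim restatements of the existence claims in Theorem \ref{dispersal-rate-effect-thm}(3) and Theorem \ref{dispersal-distance-effect-thm}(3), respectively, so no further work is required for these.

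The only bookkeeping point worth checking is the translation between the $h_i$ convention used in the operator $\nu_i\mathcal{K}_i+h_i(\cdot)\mathcal{I}$ and the $a_i$ convention used in some of the cited results: for $i=1,3$ the difference $h_i-a_i$ is a constant, while for $i=2$ it is $-\nu_2\int_D k(y-\cdot)\,dy$, a fixed continuous function depending only on $k$ and $D$. In all cases the smoothness and extremum hypotheses pass between $a_i$ and $h_i$ without issue, so there is genuinely no obstacle to overcome; the corollary follows by assembling the cited results.
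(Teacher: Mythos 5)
Your proposal is correct and, at its core, takes the same route as the paper: the corollary is a bookkeeping statement that assembles previously established results. The paper's own proof is even terser than yours, simply declaring that (1) and (2) ``are'' Theorem~\ref{spatial-effect-thm}(1) and (2) and that (3) and (4) ``are'' Theorem~\ref{dispersal-rate-effect-thm}(3) and Theorem~\ref{dispersal-distance-effect-thm}(3).

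The one place where you add useful care is the $i=3$ case of parts (1) and (2). Theorem~\ref{spatial-effect-thm}(1)--(2) are literally stated only for $1\le i\le 2$, so citing them alone would leave $i=3$ uncovered. You correctly fill this in via Remark~\ref{spatial-effect-rk}(1)--(2) and \cite{ShZh0}, and you correctly observe that the translation between $h_3$ and $a_3$ (an additive constant $-\nu_3$) makes the two formulations of the vanishing condition equivalent. (An alternative is to note that the arguments given in the proofs of Theorem~\ref{spatial-effect-thm}(1)--(2), involving the operators $U^i_{a_i,\nu_i,\alpha}$ and $V^i_{a_i,\nu_i,\alpha}$, go through verbatim for $i=3$ using $\int_{\RR^N}k(y-x)\,dy=1$; this is evidently what the paper intends, even though the theorem statements restrict the index.) Parts (3) and (4) are indeed verbatim restatements of the cited theorems, as you say. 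Overall your write-up is slightly more explicit and complete than the paper's one-line proof, but the logic is the same.
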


\begin{proof}
(1) and (2) are Theorem \ref{spatial-effect-thm}(1) and (2), respectively.

(3) is Theorem \ref{dispersal-rate-effect-thm}(3).

(4) is Theorem \ref{dispersal-distance-effect-thm}(3).

\end{proof}

\begin{theorem}
\label{competition-system-thm}
\begin{itemize}
\item[(1)]  There are $u^*(\cdot)\in X_1^{++}$ and $v^*(\cdot)\in X_2^{++}$ such that
$(u^*(\cdot),0)$ and $(0,v^*(\cdot))$ are stationary solutions of \eqref{competition-system-eq}. Moreover,
for any $(u_0,v_0)\in X_1^+\times X_2^+$ with $u_0\not = 0$ and $v_0=0$ (resp. $u_0=0$ and $v_0\not = 0$),
$(u(t,\cdot;u_0,v_0),v(t,\cdot;u_0,v_0))\to (u^*(\cdot),0)$ (resp.
$(u(t,\cdot;u_0,v_0),v(t,\cdot;u_0,v_0))\to (0,v^*(\cdot))$) as $t\to\infty$.

\item[(2)] For any $(u_0,v_0)\in (X_1^+\setminus\{0\})\times (X_2^+\setminus\{0\})$,
$\lim_{t\to \infty} (u(t,\cdot;u_0,v_0),v(t,\cdot;u_0,v_0))=(0,v^*(\cdot))$.
\end{itemize}
\end{theorem}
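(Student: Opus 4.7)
The plan is to split the theorem along its two statements and then invoke the abstract theory of strongly monotone competitive semiflows. For part (1) I would freeze one component at zero and analyse the resulting scalar nonlocal logistic equation. With $v\equiv 0$ the first line of \eqref{competition-system-eq} is a Dirichlet-type nonlocal KPP equation on $X_1$; since $\tilde\lambda_1(\nu,f(\cdot,0))>0$ by hypothesis and $f(x,w)<0$ for $w\gg 1$, I would build a pair of sub-/super-solutions (small $\varepsilon\phi$ from an approximate positive eigenfunction associated with $\tilde\lambda_1(\nu,f(\cdot,0))$, and a large constant coming from the sign of $f$ at infinity) and use monotone iteration to obtain a unique positive steady state $u^*\in X_1^{++}$, together with global attraction in $X_1^+\setminus\{0\}$ via monotonicity of the scalar semiflow (using $\partial_2 f<0$). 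To repeat the argument for the second equation I first record the key comparison: because $\mathcal{K}_1=\mathcal{K}_2$ and $h_2(x)-h_1(x)=\nu(1-\int_D k(y-x)\,dy)\ge 0$ with strict inequality on a boundary layer, Theorem~\ref{spatial-effect-thm}(5) applied to the common operator family $\nu\mathcal{K}_1+(\cdot)\mathcal{I}$ gives $\tilde\lambda_2(\nu,a)\ge\tilde\lambda_1(\nu,a)$ for every $a\in X_1$. In particular $\tilde\lambda_2(\nu,f(\cdot,0))>0$, and the same sub-/super-solution argument produces $v^*\in X_2^{++}$ attracting $X_2^+\setminus\{0\}$.

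For part (2) I would use the fact that the semiflow generated by \eqref{competition-system-eq} is strongly monotone on $X_1^+\times X_2^+$ for the competitive cone $(u_1,v_1)\le_K(u_2,v_2)\iff u_1\le u_2\ \text{and}\ v_1\ge v_2$; this follows from $\partial_2 f<0$, nonnegativity of $k$, and $k(0)>0$. Two structural facts are needed: first, $(u^*,0)$ is linearly unstable while $(0,v^*)$ is linearly stable; second, no coexistence equilibrium exists. The linearised $v$-equation at $(u^*,0)$ has principal spectrum point $\tilde\lambda_2(\nu,f(\cdot,u^*))$; since $u^*>0$ solves the Dirichlet equation with reaction $f(\cdot,u^*)$, it is a positive eigenfunction forcing $\tilde\lambda_1(\nu,f(\cdot,u^*))=0$, and the strict spectrum comparison discussed below then yields $\tilde\lambda_2(\nu,f(\cdot,u^*))>0$, the invasion exponent of $v$. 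The symmetric computation at $(0,v^*)$ gives $\tilde\lambda_1(\nu,f(\cdot,v^*))<\tilde\lambda_2(\nu,f(\cdot,v^*))=0$. Nonexistence of coexistence is immediate from the same idea: a steady state $(\bar u,\bar v)\in X_1^{++}\times X_2^{++}$ would make $\bar u$ and $\bar v$ positive eigenfunctions certifying $\tilde\lambda_1(\nu,f(\cdot,\bar u+\bar v))=\tilde\lambda_2(\nu,f(\cdot,\bar u+\bar v))=0$, again contradicting the strict comparison.

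With these ingredients, the convergence claim in (2) follows from the standard trichotomy theorem for strongly monotone, strictly sub-homogeneous competitive semiflows (Hsu--Smith--Waltman, in the nonlocal-dispersal form used by Hetzer--Nguyen--Shen and Kao--Lou--Shen): every orbit in $(X_1^+\setminus\{0\})\times(X_2^+\setminus\{0\})$ converges to one of the two semi-trivial equilibria, and the instability of $(u^*,0)$ together with the local asymptotic stability of $(0,v^*)$ identifies the limit as $(0,v^*)$. The main technical obstacle I anticipate is establishing the strict inequality $\tilde\lambda_2(\nu,a)>\tilde\lambda_1(\nu,a)$ whenever $h_2\not\equiv h_1$, since the two principal spectrum points need not be isolated eigenvalues; I would argue that any positive eigenfunction of $\nu\mathcal{K}_1+(h_1+a)\mathcal{I}$ at its spectral bound is a strict positive subsolution for $\nu\mathcal{K}_1+(h_2+a)\mathcal{I}$, which forces $\tilde\lambda_2>\tilde\lambda_1$ via irreducibility of the associated positive semigroup, and when no such eigenfunction exists I would perturb $a$ slightly so that Theorem~\ref{spatial-effect-thm}(1) or (2) supplies genuine principal eigenvalues and then pass to the limit.
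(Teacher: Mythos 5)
Your handling of part (1) is essentially the paper's: identify $\tilde\lambda_2(\nu,f(\cdot,0))>0$ from the spectrum comparison between the Dirichlet-type and Neumann-type operators (the paper records this as a separate lemma), and then invoke the monostable scalar theory (the paper cites Rawal--Shen Theorem~E) to produce the globally attracting $u^*, v^*$. That much is sound.

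For part (2), you take a genuinely different route -- the abstract trichotomy of Hsu--Smith--Waltman for strongly monotone competitive semiflows -- whereas the paper proceeds by hand: it builds the explicit super-solution $(u^*+\delta^2,\;\delta\phi)$ with $\phi$ a principal eigenfunction of a perturbed Neumann-type operator, shows the orbit from this initial datum decreases monotonically in the competitive order $\le_2$, identifies the limit as $(0,v^*)$ through the integral identity $\int_D u^{**}v^{**}\,dx=\int_D\bigl(\int_D k(y-x)\,dy\bigr)u^{**}v^{**}\,dx$ (which kills $u^{**}$ near $\partial D$ and then everywhere), upgrades pointwise to uniform convergence by Dini's theorem, and finally sandwiches an arbitrary orbit between single-species dynamics and this special super-solution orbit. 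There is a concrete gap in your abstract route: the nonlocal solution semiflow has no smoothing property, so orbits in $C(\bar D)\times C(\bar D)$ are in general not precompact, and the standard trichotomy theorem for monotone competitive systems requires strong monotonicity \emph{together with} order-compactness (or precompactness of positive orbits). You would therefore have to establish precompactness separately, which is precisely the kind of difficulty the paper's constructive squeeze argument is designed to avoid. A second, smaller gap is the claim $\tilde\lambda_1(\nu,f(\cdot,v^*))<0$: Lemma~\ref{dirichlet-neumann-bc-lm} gives strict inequality only when the Dirichlet principal eigenvalue $\lambda_1$ actually exists, and you have not shown that $\lambda_1(\nu,f(\cdot,v^*))$ exists; your fallback -- perturb and pass to the limit -- yields only the non-strict inequality in the limit, which is not enough to assert linear stability of $(0,v^*)$. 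The paper sidesteps this entirely: it never needs stability of $(0,v^*)$, only the instability of $(u^*,0)$, which is certified by the genuine eigenvalue $\lambda_1(\nu,f(\cdot,u^*))=0$ and the strict comparison $\tilde\lambda_2(\nu,f(\cdot,u^*))>0$. Your observation that a coexistence equilibrium would force $\lambda_1(\nu,f(\cdot,\bar u+\bar v))=\lambda_2(\nu,f(\cdot,\bar u+\bar v))=0$, contradicting strict comparison, is correct and is in fact the same arithmetic behind the paper's integral identity; but without precompactness this alone does not deliver the convergence conclusion. To repair the proposal you would either need to prove asymptotic compactness of the semiflow or replace the abstract appeal with the paper's explicit super-solution sandwich.
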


\section{Preliminary}

In this section, we present some preliminary materials to be used in the proofs of the main results in
later sections.

\subsection{Basic properties of solutions of nonlocal dispersal evolution equations}

In this subsection, we first present some basic properties of the solutions to the following  evolution equations associated to the
eigenvalue problems \eqref{dirichlet-op}, \eqref{neumann-op}, and \eqref{periodic-op},
\begin{equation}
\label{dirichlet-eq}
\p_t u(t,x)=\nu_1\left[\int_D k(y-x)u(t,y)dy-u(t,x)\right]+a_1(x)u(t,x),\quad\,\ \, x\in  \bar D,
\end{equation}
\begin{equation}
\label{neumann-eq}
\p_t u(t,x)= \nu_2\int_D k(y-x)[u(t,y)-u(t,x)]dy+a_2(x)u(t,x),\quad \quad x\in \bar D,
\end{equation}
and
\begin{equation}
\label{periodic-eq}
\begin{cases}
\p_t u(t,x)=\nu_3[\int_{\RR^N} k(y-x)u(t,y)dy-u(t,x)]+a_3(x)u(t,x),\quad
&x\in \RR^N, \cr u(t,x+p_j{\bf e_j})=u(t,x),\quad & x\in \RR^N,
\end{cases}
\end{equation}
respectively.

By general semigroup theory,  for any given $u_0\in X_1$ (resp. $u_0\in X_2$,
$u_0\in X_3$), \eqref{dirichlet-eq} (resp. \eqref{neumann-eq}, \eqref{periodic-eq}) has
a unique solution $u_1(t,\cdot;u_0,\nu_1,a_1)\in X_1$ (resp. $u_2(t,\cdot;u_0,\nu_2,a_2)\in X_2$, $u_3(t,\cdot;u_0,\nu_3,a_3)\in X_3$)
with $u_i(0,x;u_0,\nu_i,a_i)=u_0(x)$ ($i=1,2,3$).
As mentioned before, by general semigroup theory, for any given $(u_0,v_0)\in X_1\times X_2$, \eqref{competition-system-eq} also
has a unique (local) solution $(u(t,\cdot;u_0,v_0),v(t,\cdot;u_0,v_0))$ with $(u(0,x;u_0,v_0),v(0,x;u_0,v_0))=(u_0(x),v_0(x))$.

For given $u^1,u^2\in X_i$, we define
$$
u^1\le u^2,\quad {\rm if}\quad u^2-u^1\in X_i^+,
$$
and
$$
u^1\ll u^2,\quad {\rm if}\quad u^2-u^1\in X_i^{++}.
$$

\begin{definition}
A continuous function $u(t,x)$ on $[0,\tau)\times \bar D$ is called a
{\rm super-solution} (or {\rm sub-solution}) of
\eqref{dirichlet-eq} if for any $x \in \bar D$, u(t, x) is
differentiable  on $[0,\tau)$ and satisfies that
\begin{equation*} \partial _t u(t, x)\geq(or \leq)
\nu_1\Big[\int_{D}k(y-x)u(t,y)dy-u(t,x)\Big]+a_1(x)u(t,x)
\end{equation*} for   $t\in [0,\tau)$.
\end{definition}

{\it Super-solutions and sub-solutions} of \eqref{neumann-eq} and \eqref{periodic-eq} are defined in an analogous way.

\vspace{-.1in}\begin{proposition}[Comparison principle]
\label{comparison-prop} $\quad$
\begin{itemize}
\item[(1)]
If $u^1(t,x)$ and $u^2(t,x)$ are bounded sub- and super-solution of \eqref{dirichlet-eq}
(resp. \eqref{neumann-eq}, \eqref{periodic-eq})
on $[0,\tau)$,
respectively, and  $u^1(0,\cdot)\leq u^2(0,\cdot)$,  then
$u^1(t,\cdot)\leq u^2(t,\cdot)$ for $t\in [0,\tau).$

\item[(2)] For given $1\le i\le 3$, if $u^1,u^2\in X_i$, $u^1\leq u^2$ and $u^1\not\equiv u^2$, then $u_i(t,\cdot;u^1,\nu_i,a_i)\ll
u_i(t,\cdot;u^2,\nu_i,a_i)$ for all $t>0$.

\item[(3)] For given $1\le i\le 3$, $u_0\in X_i^+$, and $a_i^1, a_i^2\in X_i$, if
$a_i^1\le a_i^2$, then
$u_i(t,\cdot; u_0, \nu_i,a_i^1)\le u_i(t,\cdot;u_0, \nu_i,a_i^2)$ for $t\ge 0.$
\end{itemize}
\end{proposition}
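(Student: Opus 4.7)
The plan is to establish (1) via an auxiliary exponentially growing perturbation that rules out any first-touching time; to deduce (2) from (1) using a Duhamel-type integral representation that spreads positivity; and to derive (3) by reducing it to an instance of (1) for a linear problem.

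For (1), let $w = u^2 - u^1$. Subtracting the sub/super-solution inequalities gives, in all three cases (Dirichlet, Neumann, periodic), an inequality of the form $\partial_t w \geq \nu_i \int k(y-x)\,w(t,y)\,dy + h_i(x) w$, where the integration domain is $D$ or $\RR^N$ accordingly. Pick $\beta > 0$ so that $h_i + \beta \geq 0$ on $\bar D$ and set $v = e^{\beta t} w$; then consider the perturbed function $\tilde v_\varepsilon = v + \varepsilon e^{C t}$ with $C$ chosen larger than $\nu_i + \max_{\bar D}(h_i + \beta)$. A direct calculation shows that $\tilde v_\varepsilon$ satisfies the \emph{strict} super-inequality $\partial_t \tilde v_\varepsilon > \nu_i \int k(y-x) \tilde v_\varepsilon(t,y)\,dy + (h_i+\beta) \tilde v_\varepsilon$. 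Since $\tilde v_\varepsilon(0,\cdot) \geq \varepsilon > 0$ and $\tilde v_\varepsilon$ is continuous on a compact spatial domain, any sign change must produce a first $(t_0,x_0)$ with $t_0 > 0$ at which $\tilde v_\varepsilon(t_0,x_0) = 0$, $\tilde v_\varepsilon(t_0,\cdot) \geq 0$, and $\partial_t \tilde v_\varepsilon(t_0,x_0) \leq 0$; the strict super-inequality then forces $\partial_t \tilde v_\varepsilon(t_0,x_0) > 0$, a contradiction. Letting $\varepsilon \downarrow 0$ yields $w \geq 0$ on $[0,\tau)$.

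For (2), part (1) already provides $w := u_i(t,\cdot;u^2,\nu_i,a_i) - u_i(t,\cdot;u^1,\nu_i,a_i) \geq 0$. Viewing the equation for $w$ pointwise in $x$ as a scalar linear ODE with nonnegative forcing, variation of constants gives a representation of the schematic form $w(t,x) = e^{h_i(x) t} w(0,x) + \nu_i \int_0^t e^{h_i(x)(t-s)} \int k(y-x) w(s,y)\,dy\,ds$. Since $w(0,\cdot) \not\equiv 0$, continuity supplies an open set $U_1$ on which $w(0,\cdot) > 0$, hence $w(t,\cdot) > 0$ on $U_1$ for every $t \geq 0$. Because $k(0) > 0$ and $k$ is continuous, every point within the dispersal radius of $U_1$ then has $w(t,x) > 0$ for $t > 0$; iterating this propagation step, and using compactness together with connectedness of $\bar D$ (respectively the torus in the periodic case), exhausts the whole domain in finitely many steps, giving $u_i(t,\cdot;u^1,\nu_i,a_i) \ll u_i(t,\cdot;u^2,\nu_i,a_i)$ for every $t > 0$. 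For (3), write $u^j = u_i(t,\cdot;u_0,\nu_i,a_i^j)$; since $u_0 \in X_i^+$, part (1) with sub-solution $0$ yields $u^j \geq 0$. The difference $w = u^2 - u^1$ then satisfies $\partial_t w = \nu_i \int k(y-x) w(t,y)\,dy + h_i^{[2]}(x) w + (a_i^2 - a_i^1)u^1$, where $h_i^{[2]}$ is built from $a_i^2$, and the last term is nonnegative. Hence $w$ is a super-solution of the linear equation with coefficient $a_i^2$ while $0$ is a sub-solution, and $w(0,\cdot) = 0$; part (1) gives $w \geq 0$.

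The main technical obstacle is the positivity-spreading iteration in (2): one must verify that repeating the dispersal step actually covers the entire spatial domain in finitely many iterations. This relies on $k(0) > 0$, so each step enlarges the positivity set by at least a fixed radius determined by the dispersal distance, combined with continuity of $k$ and the compactness/connectedness of $\bar D$ (or the torus). A secondary bookkeeping issue is unifying the Dirichlet, Neumann, and periodic cases, which amounts to keeping track of whether the kernel is integrated over $D$ or $\RR^N$ and using the correct form of $h_i$, but otherwise the arguments are identical.
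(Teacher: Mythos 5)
Your proposal is correct, and for part (3) it is essentially the same reduction the paper uses: the paper observes that $u_i(t,\cdot;u_0,\nu_i,a_i^2)$ is a super-solution of the equation with coefficient $a_i^1$ and then invokes part (1); you instead write the equation for the difference $w$ with a nonnegative forcing term $(a_i^2-a_i^1)u^1$ and again invoke part (1) — these are trivially equivalent formulations of the same step. For parts (1) and (2) the paper does not give a self-contained proof but simply cites Propositions 2.1 and 2.2 of Shen–Zhang [ShZh0]; your arguments are the standard ones used there. The perturbation $\tilde v_\varepsilon = e^{\beta t}w + \varepsilon e^{Ct}$ with $C > \nu_i + \max(h_i+\beta)$ correctly makes the super-inequality strict (using $\int_D k(y-x)\,dy \le 1$), and the first-touching-time contradiction is sound because the spatial domain is compact, $w$ is jointly continuous, and differentiability in $t$ is built into the definition of sub/super-solution. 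Your Duhamel-based positivity-spreading argument for (2) is likewise correct; the one point worth making explicit is that the iterated $\rho$-neighborhoods (with $\rho$ chosen so $B(0,\rho)\subset\{k>0\}$, guaranteed by $k(0)>0$ and continuity) exhaust $\bar D$ because a smooth bounded domain is in particular connected, so $\bigcup_n U_n$ is a nonempty, relatively open and closed subset of $\bar D$, hence all of $\bar D$; also, each $U_n$ meets the open set $D$ in a set of positive measure, which is what makes the kernel integral strictly positive. You have correctly flagged this as the main technical point. No gaps.
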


\begin{proof}
(1) It follows from the arguments in \cite[Proposition 2.1]{ShZh0}.

(2) It follows from the arguments in \cite[Proposition 2.2]{ShZh0}.

(3) We consider the case $i=1$. Other cases can be proved similarly.

 Note that $u_1(t,x;\nu_1,a_1^2)$ is a supersolution of \eqref{dirichlet-eq} with $a_1(\cdot)$ being replaced by $a_1^1(\cdot)$.
Then by (1),
$$
u_1(t,\cdot;u_0, \nu_1,a_1^1)\leq u_1(t,\cdot;u_0, \nu_1,a_1^2)\quad \forall \,\, t\ge 0.
$$
\end{proof}

Next, we consider \eqref{competition-system-eq} and present some basic properties for solutions of the two species competition system.

 For given $(u^1, v^1), (u^2, v^2)\in X_1\times X_2$, we define
 $$
 (u^1, v^1)\le_1 (u^2, v^2),\quad {\rm if}\quad u^1(x)\le u^2(x),\,\, v^1(x)\le v^2(x),
 $$
 and
 $$
 (u^1, v^1)\le_2(u^2, v^2),\quad {\rm if}\quad u^1(x)\le u^2(x),\,\, v^1(x)\ge v^2(x).
 $$

Let $T>0$ and $(u(t, \!x), v(t, \!x))\in C([0,T)\times\bar{D},\ensuremath{\mathbb{R}}^2)$ with $(u(t,\cdot),v(t,\cdot))\in X_1^+\times X_2^+$. Then
$(u(t, \!x), v(t,\!x))$ is called a {\it super-solution} ({\it sub-solution}) of \eqref{competition-system-eq} on $[0,T)$ if
$$
\begin{cases}
\partial_tu(t, x)\geq (\leq)\nu [\int_D k(y-x)u(t,y)dy-u(t,x)]+u (t,x)f(x,u(t,x)+v(t,x)),\quad x\in\bar D,\cr
\partial_tv(t, x)\leq (\geq
)\nu\int_D k(y-x)[v(t,y)-v(t,x)]dy+v(t,x)f(x,u(t,x)+v(t,x)),\quad x\in \bar D,
\end{cases}
$$
for $t\in [0,T)$.

\begin{proposition}
\label{comparison-system-prop}
\begin{itemize}
\item[\rm(1)]\!\! If $(0,\, \!0)\!\leq_1\!(u_0,\!v_0)$, then $(0,0)\!\leq_1\!
(u(t,\cdot;u_0,\!v_0),v(t,\!\cdot;u_0,\!v_0))$ for all $t>0$
at which $(u(t,\cdot;u_0,v_0),v(t,\cdot;u_0,v_0))$ exists.

\item[\rm(2)] \!\! If $(0,\, \!0)\!\!\leq_1\!\!(u_i,\!v_i)$, for $i=1,2$,
$(u_1(0,\cdot),v_1(0,\cdot))\leq_2 (u_2(0,\cdot)$, $v_2(0,\cdot))$, and
$(u_1(t, \!x),v_1(t,\!x))$  and $(u_2(t, x),v_2(t,\!x))$  are a sub-solution and  a super-solution of \eqref{competition-system-eq} on
$[0,T)$ respectively, then $(u_1(t,\cdot),v_1(t,\cdot))\leq _2 (u_2(t,\cdot)$, $v_2(t,\cdot))$ for $t\in (0,T)$.

\item[\rm(3)] \!\! If $(0,\, \!0)\!\leq_1\!(u_i,\!v_i)$, for $i=1,2$ and $(u_1,v_1)\leq_2 (u_2,v_2)$, then
$$(u(t,\cdot;u_1,v_1), v(t,\cdot;u_1,v_1))\leq _2
(u(t,\cdot;u_2,v_2),v(t,\cdot;u_2,v_2))$$ for all $t\!>\!0$ at which both $(u(t,\cdot;u_1,v_1)$,
$v(t,\cdot;u_1,v_1))$ and $(u(t,\cdot;u_2,v_2)$, $v(t,\cdot;u_2,v_2))$ exist.

\item[\rm(4)] Let $(u_0,v_0)\in X_1^+\times X_2^+$, then $(u(t,\cdot;u_0,v_0),v(t,\cdot;u_0,v_0))$
exists for all $t>0$.
\end{itemize}
\end{proposition}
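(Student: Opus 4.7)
For Proposition~\ref{comparison-system-prop}, I handle the four parts in order; the driving structural feature throughout is that $\partial_2 f<0$, which makes \eqref{competition-system-eq} quasimonotone in the competitive order $\le_2$ (equivalently, cooperative after the substitution $v\mapsto -v$ on the physical cone). For part (1) I use a barrier argument adapted to the nonlocal operators. Fix a compact subinterval $[0,T)$ of the local existence interval and take $\epsilon>0$ small together with $C>0$ large, to be chosen. Set $\tilde u=u+\epsilon e^{Ct}$ and $\tilde v=v+\epsilon e^{Ct}$, both strictly positive at $t=0$. If either first touches $0$ at some $(t_0,x_0)$, say $\tilde u$, then $\partial_t\tilde u(t_0,x_0)\le 0$; but using $u(t_0,y)\ge -\epsilon e^{Ct_0}$ for every $y$ in the equation for $u$ gives
\[
  \partial_t\tilde u(t_0,x_0)\ge \epsilon e^{Ct_0}\Bigl[C+\nu\bigl(1-{\textstyle\int_D} k(y-x_0)\,dy\bigr)-f\bigl(x_0,(u+v)(t_0,x_0)\bigr)\Bigr].
\]
Boundedness of $f$ on the range swept by $u+v$ over $[0,T)$ lets me pick $C$ large enough to make the bracket positive, a contradiction; sending $\epsilon\to 0$ yields $u,v\ge 0$.

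Part (2) is the main step. Put $U=u_2-u_1$, $V=v_1-v_2$ (both $\ge 0$ at $t=0$ by hypothesis), and introduce the nonnegative coefficient
\[
  \alpha(t,x):=-\!\int_0^1\!\partial_2 f\bigl(x,\,s(u_2+v_2)+(1-s)(u_1+v_1)\bigr)\,ds\ge 0.
\]
Subtracting the sub-/super-solution inequalities and using $f(x,u_2+v_2)-f(x,u_1+v_1)=-\alpha(U-V)$ produces
\begin{align*}
  U_t&\ge\nu(\mathcal{K}_1 U-U)+U\bigl[f(x,u_2+v_2)-u_1\alpha\bigr]+(u_1\alpha)V,\\
  V_t&\ge\nu\!\int_D k(y-x)\bigl[V(y)-V(x)\bigr]dy+V\bigl[f(x,u_2+v_2)-v_1\alpha\bigr]+(v_1\alpha)U,
\end{align*}
whose key feature is that the off-diagonal couplings $u_1\alpha,\,v_1\alpha$ are nonnegative --- the cooperative structure coming from $\partial_2 f<0$ together with the positivity of $u_1,v_1$ (part (1)). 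I then repeat the barrier argument on $U+\epsilon e^{Ct}$ and $V+\epsilon e^{Ct}$: at a hypothetical first touching time the cross-coupling contribution is $\ge 0$, the diagonal terms play the role of those in part (1), and large $C$ forces a strict inequality contradicting $\partial_t\le 0$. Letting $\epsilon\to 0$ concludes $(u_1,v_1)\le_2(u_2,v_2)$ on $(0,T)$.

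Parts (3) and (4) follow cleanly from what is in place. For (3), the two solution curves $(u(t,\cdot;u_i,v_i),v(t,\cdot;u_i,v_i))$ are simultaneously sub- and super-solutions of \eqref{competition-system-eq}, so (2) applies directly. For (4), I derive a priori $L^\infty$ bounds and invoke Proposition~\ref{comparison-prop}: since $v\ge 0$ by (1) and $\partial_2 f<0$, $u$ is a sub-solution of the scalar equation $w_t=\nu(\mathcal{K}_1 w-w)+w f(x,w)$; choosing $K\ge\|u_0\|_\infty$ so large that $f(\cdot,K)<0$ on $\bar D$ makes the constant $K$ a super-solution of this scalar equation (using $\mathcal{K}_1 K-K\le 0$), so $u(t,\cdot)\le K$. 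The same argument applied to the $v$-equation bounds $v$ uniformly, which rules out blow-up and yields global existence.

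The principal obstacle is part (2). The nonlocal operators admit no classical strong maximum principle, so a pure first-touching-time argument only produces $\partial_t\ge 0$, not $>0$. The $\epsilon e^{Ct}$ perturbation, together with the nonnegative off-diagonal couplings that depend crucially on $\partial_2 f<0$ and on the positivity from part (1), is precisely what upgrades $\ge 0$ to strict inequality and closes the argument.
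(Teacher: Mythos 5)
The paper does not actually prove Proposition~\ref{comparison-system-prop}; it simply states that it ``follows from the arguments in Proposition 3.1 in \cite{HeNgSh}.'' Your write-up supplies a complete argument along what is essentially the standard route for such systems: the order $\le_2$ turns the competition system into a cooperative (quasimonotone) one because $\partial_2 f<0$, and the $\epsilon e^{Ct}$ barrier compensates for the lack of a strong maximum principle for nonlocal operators. The argument is correct. One small attribution error in part (2): the nonnegativity of $u_1,v_1$, which you need so that the off-diagonal coefficients $u_1\alpha$ and $v_1\alpha$ are nonnegative, comes from the paper's \emph{definition} of sub-/super-solution (it requires $(u(t,\cdot),v(t,\cdot))\in X_1^+\times X_2^+$), not from part (1), which concerns actual solutions rather than comparison functions. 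Also worth noting explicitly: at a first touching point of $\tilde U$ one substitutes $V(t_0,x_0)\ge -\epsilon e^{Ct_0}$, after which the $\pm u_1\alpha$ contributions cancel, so the inequality that survives has exactly the same form as in part (1); this is why the same choice of $C$ closes the argument, and it is the sign of the coupling coefficient, not its size, that matters. Part (4) correctly uses that $\mathcal{K}_1 K - K\le 0$ for a constant $K$ (since $\int_D k(y-x)\,dy\le \int_{\RR^N}k=1$) to build a scalar super-solution and obtain the a priori bound.
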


\begin{proof}
It follows from the arguments in Proposition 3.1 in \cite{HeNgSh}.
\end{proof}

\subsection{Basic properties of principal spectrum points of nonlocal dispersal operators}

In this subsection, we prove some basic properties of principal spectrum points/principal eigenvalues
of nonlocal dispersal operators.

First of all,  we derive some properties of  the  principal spectrum points of nonlocal dispersal operators
by using
the spectral radius of the solution operators of the associated evolution equations.
To this end, define $\Phi_i(t;\nu_i,a_i):X_i\to X_i$  by
\begin{equation}
\label{solution-op}
\Phi_i(t;\nu_i,a_i)u_0=u_i(t,\cdot;u_0,\nu_i,a_i),\quad u_0\in X_i,\,\,\, i=1,2,3.
\end{equation}
Let $r(\Phi_i(t;\nu_i,a_i))$ be the spectral radius of $\Phi_i(t;\nu_i,a_i)$. We have the following propositions.

\begin{proposition}
\label{belong-prop}
Let $1\le i\le 3$ be given.
\begin{itemize}
\item[(1)]
For given  $t>0$, $
e^{\tilde\lambda_i(\nu_i,a_i) t}=r(\Phi_i(t;\nu_i,a_i)).
$

\item[(2)] $\tilde\lambda_i(\nu_i,a_i)\in\sigma(\nu_i\mathcal{K}_i+h_i(\cdot)\mathcal{I})$.
\end{itemize}
\end{proposition}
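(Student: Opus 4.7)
The plan is to prove (1) via the spectral mapping theorem for bounded generators, and to deduce (2) by a shift-and-Krein--Rutman argument based on the positivity of $k$ and the standard fact that the spectral radius of a bounded positive operator on a Banach lattice lies in its spectrum.

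First I would observe that $A_i := \nu_i\mathcal{K}_i + h_i(\cdot)\mathcal{I}$ is a \emph{bounded} linear operator on $X_i$: the integral operator $\mathcal{K}_i$ has norm at most $\sup_{x\in\bar D}\int_D k(y-x)\,dy$, while $h_i(\cdot)\mathcal{I}$ has norm at most $\|h_i\|_\infty$. Consequently $\Phi_i(t;\nu_i,a_i)$ coincides with the uniformly continuous semigroup $e^{tA_i}$ given by its norm-convergent exponential series. The Riesz--Dunford holomorphic functional calculus then yields the full spectral mapping identity $\sigma(e^{tA_i}) = e^{t\sigma(A_i)}$. Since $\sigma(A_i)$ is a nonempty compact subset of $\CC$, the supremum defining $\tilde\lambda_i(\nu_i,a_i)$ is actually attained, so
$$r(\Phi_i(t;\nu_i,a_i)) = \max_{\mu\in\sigma(A_i)} e^{t\operatorname{Re}\mu} = e^{t\tilde\lambda_i(\nu_i,a_i)},$$
which proves (1).

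For (2), I would pick $c\in\RR$ large enough that $h_i(x)+c\ge 0$ on $\bar D$ and set $B_i := A_i + c\mathcal{I} = \nu_i \mathcal{K}_i + (h_i(\cdot)+c)\mathcal{I}$. Since $k\ge 0$ and $h_i(x)+c\ge 0$, both summands map $X_i^+$ into itself, so $B_i$ is a bounded positive operator on the Banach lattice $X_i$ (each of $X_1,X_2,X_3$ is a sublattice of a space of continuous functions, hence a Banach lattice under the pointwise order). The classical Krein--Rutman/Schaefer result that $r(T)\in\sigma(T)$ for every bounded positive $T$ on a Banach lattice gives $r(B_i)\in\sigma(B_i)$. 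Being a real element of $\sigma(B_i)$, $r(B_i)\le s(B_i):=\sup\{\operatorname{Re}\mu:\mu\in\sigma(B_i)\}$, while the trivial bound $|\mu|\ge\operatorname{Re}\mu$ gives the reverse inequality, so $r(B_i)=s(B_i)$. The identity $\sigma(B_i)=\sigma(A_i)+c$ yields $s(B_i)=\tilde\lambda_i(\nu_i,a_i)+c$, hence $\tilde\lambda_i(\nu_i,a_i)+c \in\sigma(B_i)$, and shifting back gives $\tilde\lambda_i(\nu_i,a_i)\in\sigma(A_i)$.

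The main obstacle is the invocation of the Krein--Rutman lemma $r(T)\in\sigma(T)$ for positive operators on a Banach lattice; although classical, one must verify that the shift $B_i$ is genuinely positive (which needs the sign conditions $k\ge 0$ and the freedom to choose $c$) and that the ambient spaces $X_i$ qualify as Banach lattices with the requisite structure. Both checks are immediate here, so after these two inputs---spectral mapping for bounded $A$ and Krein--Rutman for positive $T$---are in place, the rest of the argument is bookkeeping.
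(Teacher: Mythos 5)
Your proof is correct and differs from the paper's in one interesting way. For part (1), both you and the paper use the fact that $A_i:=\nu_i\mathcal{K}_i+h_i(\cdot)\mathcal{I}$ is bounded, so that $\Phi_i(t)=e^{tA_i}$ and the Riesz--Dunford spectral mapping theorem gives $\sigma(\Phi_i(t))=e^{t\sigma(A_i)}$, from which $r(\Phi_i(t))=e^{t\tilde\lambda_i(\nu_i,a_i)}$ follows by compactness of the spectrum. For part (2), however, the paper applies the positivity principle ``$r(T)\in\sigma(T)$ for positive $T$'' (quoting Meyer-Nieberg) to the \emph{semigroup operators} $\Phi_i(t)$, which are positive by the comparison principle, and then has to transfer the conclusion back from $\sigma(\Phi_i(t))$ to $\sigma(A_i)$ through the spectral mapping identity; this last transfer has a small hidden point, namely that one must take $t$ small (or argue over several $t$) to rule out a spectral point $\tilde\lambda_i+2\pi ik/t$ with $k\neq 0$. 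You instead apply the same positivity principle directly to the \emph{shifted generator} $B_i=A_i+c\mathcal{I}$, which is a bounded positive operator once $c\ge -\min_{\bar D}h_i$ (here you correctly rely on $k\ge 0$), and then conclude via the elementary identities $r(B_i)=s(B_i)$ and $\sigma(B_i)=\sigma(A_i)+c$. Your route avoids passing through the exponential entirely in part (2), which makes the argument a bit cleaner; the price is the introduction of the shift $c$ and a separate verification that the $X_i$ are Banach lattices, both of which you correctly flag as the only substantive checks. Either argument is valid, and your version would be a fine replacement for the one in the paper.
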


\begin{proof}
 Observe that $\nu_i\mathcal{K}_i+h_i(\cdot)\mathcal{I}:X_i\to X_i$ is a bounded linear operator. Then by spectral mapping theorem,
\begin{equation}
\label{radius-eq1}
e^{\sigma (\nu_i\mathcal{K}_i+h_i(\cdot)\mathcal{I}) t}=\sigma(\Phi_i(t;\nu_i,a_i))\setminus\{0\}\quad \forall\,\, t>0.
\end{equation}
By Proposition \ref{comparison-prop},
\begin{equation}
\label{radius-eq2}
\Phi_i(t;\nu_i,a_i)X_i^+\subset X_i^+\quad \forall \,\, t>0.
\end{equation}
Hence  $\Phi_i(t;\nu_i,a_i)$ is a positive operator on $X_i$. Then by \cite[Proposition 4.1.1]{Mey},
 $r(\Phi_i(t;\nu_i,a_i)\in \sigma(\Phi_i(t;\nu_i,a_i))$ for any $t>0$. By \eqref{radius-eq1},
$$
e^{\tilde\lambda_i(\nu_i,a_i) t}=r(\Phi_i(t;\nu_i,a_i))\quad \forall\,\, t>0,
$$
and hence
$\tilde\lambda_i(\nu_i,a_i)\in \sigma(\nu_i\mathcal{K}_i+h_i(\cdot)\mathcal{I})$.
\end{proof}

\begin{proposition}
\label{most-basic-prop}
\begin{itemize}
\item[(1)] $\tilde\lambda_1(\nu_1,0)<0$.

\item[(2)] $\tilde\lambda_2(\nu_2,0)=0$.

\item[(3)] $\tilde\lambda_3(\nu_3,0)=0$.

\end{itemize}
\end{proposition}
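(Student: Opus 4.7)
The plan is to handle parts (2) and (3) together by an immediate stationary-solution plus maximum-principle argument, and to reduce part (1) to showing that the spectral radius of the compact integral operator $\mathcal{K}_1$ is strictly less than one.

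For parts (2) and (3), I would first note that when $a_i\equiv 0$ one has $h_2(x)=-\nu_2\int_D k(y-x)\,dy$ and $h_3(x)=-\nu_3$, and a direct calculation shows $(\nu_i\mathcal{K}_i+h_i(\cdot)\mathcal{I})1\equiv 0$ for $i=2,3$. Hence $0\in\sigma(\nu_i\mathcal{K}_i+h_i(\cdot)\mathcal{I})$ and $\tilde\lambda_i(\nu_i,0)\ge 0$. For the reverse inequality, since any constant is a stationary solution, $\pm\|u_0\|_\infty$ are super- and sub-solutions of \eqref{neumann-eq} (respectively \eqref{periodic-eq}), so Proposition \ref{comparison-prop}(1) yields $\|\Phi_i(t;\nu_i,0)u_0\|_\infty\le\|u_0\|_\infty$, hence $r(\Phi_i(t;\nu_i,0))\le 1$; by Proposition \ref{belong-prop}(1) this gives $\tilde\lambda_i(\nu_i,0)\le 0$, so equality holds.

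For (1), the same maximum-principle argument using $u\equiv 1$ (which is now only a \emph{strict} super-solution, since $\nu_1\int_D k(y-x)\,dy-\nu_1<0$ for $x$ near $\p D$) again gives $\tilde\lambda_1(\nu_1,0)\le 0$. To upgrade to strict inequality I would exploit that $h_1\equiv-\nu_1$ is constant, so $\Phi_1(t;\nu_1,0)=e^{-\nu_1 t}e^{\nu_1 t\mathcal{K}_1}$; the spectral mapping theorem for exponentials of bounded operators, together with the fact that for a positive operator the spectral radius coincides with the maximum real part of the spectrum, yields $r(\Phi_1(t;\nu_1,0))=e^{\nu_1 t(r(\mathcal{K}_1)-1)}$. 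By Proposition \ref{belong-prop}(1), $\tilde\lambda_1(\nu_1,0)=\nu_1(r(\mathcal{K}_1)-1)$, and it suffices to show $r(\mathcal{K}_1)<1$.

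To prove $r(\mathcal{K}_1)<1$, I would argue by contradiction. The operator $\mathcal{K}_1$ is compact on $C(\bar D)$ (by continuity of $k$ and Arzel\`a--Ascoli) and positive with $\mathcal{K}_1 1>0$ pointwise on $\bar D$, so Krein--Rutman yields a positive eigenvalue $r:=r(\mathcal{K}_1)>0$ with a nonnegative eigenfunction $\phi\not\equiv 0$. If $r\ge 1$, evaluating $r\phi(x)=\int_D k(y-x)\phi(y)\,dy$ at a point $x^*\in\bar D$ where $\phi$ attains its maximum forces $r=1$, $\int_D k(y-x^*)\,dy=1$, and $\phi\equiv\phi(x^*)$ on $\{y\in\bar D:k(y-x^*)>0\}$; using $k(0)>0$ and continuity of $k$, one concludes $\phi\equiv\phi(x^*)$ on a full neighborhood of $x^*$ in $\bar D$. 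Hence the set of maximum points of $\phi$ is both open and closed in the connected set $\bar D$ and must equal $\bar D$, whereupon $\int_D k(y-x)\,dy=1$ for every $x\in\bar D$ forces $B(x,\delta)\subset\bar D$ for every $x\in\bar D$, contradicting the boundedness of $D$ (take $x\in\p D$). This last propagation-of-maximum step is the only real obstacle in the proof; parts (2) and (3) are essentially immediate once one notices the stationary solution $u\equiv 1$.
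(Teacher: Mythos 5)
Your proof is correct, but part (1) is argued along a genuinely different route from the paper's. The paper stays at the semigroup level: it observes that $u_0\equiv 1$ is a strict super-solution of \eqref{dirichlet-eq} with $a_1\equiv 0$ (strict because $\int_D k(y-x_0)\,dy<1$ for some $x_0$), invokes the strong comparison principle (Proposition \ref{comparison-prop}(2)) to get $0\ll\Phi_1(t;\nu_1,0)1\ll 1$, hence $\|\Phi_1(t;\nu_1,0)\|=\|\Phi_1(t;\nu_1,0)1\|_\infty<1$ and $r(\Phi_1(t;\nu_1,0))<1$. You instead pass to the generator: using that $h_1\equiv-\nu_1$ is constant and $\mathcal{K}_1$ is a positive operator (so $r(\mathcal{K}_1)$ equals the spectral bound), you reduce (1) to $r(\mathcal{K}_1)<1$, which you prove by Krein--Rutman plus an elliptic propagation-of-maximum argument: at a maximum $x^*$ of a nonnegative eigenfunction, the eigenvalue equation forces $r=1$, $\int_D k(y-x^*)\,dy=1$, and constancy of $\phi$ on a neighborhood of $x^*$, whence the maximum set is open and closed in the connected set $\bar D$, giving a contradiction at a boundary point. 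Both arguments hinge on the same structural facts ($k(0)>0$, Dirichlet "mass loss" near $\p D$, connectedness of $D$); the paper packages these into the parabolic strong comparison principle and is therefore shorter, while your version exposes them explicitly at the elliptic level, dispenses with Proposition \ref{comparison-prop}(2), and incidentally identifies $\tilde\lambda_1(\nu_1,0)=\nu_1(r(\mathcal{K}_1)-1)$, which the paper does not record. Your treatment of (2) and (3) matches the paper's essentially verbatim (the constant $1$ is a stationary solution, so $r(\Phi_i(t;\nu_i,0))=1$). One small remark: the appeal to Krein--Rutman to produce a nonnegative eigenfunction at $r(\mathcal{K}_1)$ requires $r(\mathcal{K}_1)>0$, which is automatic under your contradiction hypothesis $r(\mathcal{K}_1)\ge 1$, so the argument is sound as written.
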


\begin{proof}
(1) Let $u_0(x)\equiv 1$. Observe that
$$
\int_D k(y-x)u_0(y)dy-u_0(x)\leq 0,
$$
and there is $x_0\in D$ such that
$$
\int_D k(y-x_0)u_0(y)dy-u_0(x)<0.
$$
By Proposition \ref{comparison-prop}(2),
$$
0\ll \Phi_1(t; \nu_1, 0)u_0\ll   u_0\quad\forall \,\, t>0,
$$
and then
$$
\|\Phi_1(t;\nu_1,0)u_0\|<1\quad \forall \,\, t>0.
$$
Note that for any $\tilde u_0\in X_1$ with $\|\tilde u_0\|\le 1$, by Proposition \ref{comparison-prop}(2)
again,
$$
\|\Phi_1(t;\nu_1,0)\tilde u_0\|\le \|\Phi_1(t;\nu_1,0)u_0\|<1\quad \forall \,\, t>0.
$$
This implies that
$$r(\Phi_1(t;\nu_1,0))<1\quad\forall\,\, t>0,
$$
and then
$
\tilde\lambda_1(\nu_1,0)<0.$

(2) Let $u_0(\cdot)\equiv 1$. Observe that
$$
\Phi_2(t;\nu_2,0)u_0=u_0\quad \forall\,\, t\ge 0,
$$
and
$$
\|\Phi_2(t;\nu_2,0)\tilde u_0\|\le \|\Phi_2(t;\nu_2,0)u_0\|=1
$$
for all $t\ge 0$ and $\tilde u_0\in X_2$ with $\|\tilde u_0\|\le 1$.
It then follows that
$$
r(\Phi_2(t;\nu_2,0))=1\quad \forall\,\, t\ge 0,
$$
and then
$
\tilde\lambda_2(\nu_2,0)=0.$

(3) It can be proved by the similar arguments as in (2).

\end{proof}

Next, we prove some properties of principal spectrum points of nonlocal dispersal operators
by using the spectral radius of the induced nonlocal operators $U^i_{a_i,\nu_i,\alpha_i}$ and
$V^i_{a_i,\nu_i,\alpha_i}$ ($i=1,2,3$), where $\alpha_i>\max_{x\in D}h_i(x)$ ($i=1,2,3$),
\begin{equation}
\label{u-1-2-op}
(U^i_{a_i, \nu_i, \alpha_i}u)(x)={\int}_D\frac{\nu_i k(y-x)u(y)}{\alpha_i-h_i(y)}dy,\quad i=1,2,
 \end{equation}
 \begin{equation}
\label{u-3-op}
(U^3_{a_3,\nu_3,
\alpha_3}u)(x)=\int_{\RR^N}\frac{\nu_3k(y-x)u(y)}{\alpha_3-h_3(y)}dy,
\end{equation}
and
 \begin{equation}
 \label{v-1-2-op}
  (V^i_{ a_i, \nu_i, \alpha_i}u)(x)=\frac{\nu_i\mathop{\int}_{D}k(y-x)u(y)dy}{\alpha_i-h_i(x)}=\frac{\nu_i(\mathcal K_i u)(x)}{\alpha_i-h_i(x)},\quad i=1,2,
\end{equation}
\begin{equation}
\label{v-3-op}
 (V^3_{ a_3, \nu_3, \alpha_3}u)(x)=\frac{\nu_3\mathop{\int}_{\RR^N}k(y-x)u(y)dy}{\alpha_3-h_3(x)}=\frac{\nu_3(\mathcal K_3 u)(x)}{\alpha_3-h_3(x)}.
\end{equation}

Observe that $U^i_{ a_i, \nu_i, \alpha_i}$  and $V^i_{a_i,\nu_i,\alpha_i}$  are  positive and compact operators on $X_i$ ($i=1,2,3$). Moreover,
there is $n\ge 1$ such that
$$
\big(U_{a_i,\nu_i,\alpha_i}^i\big)^n (X_i^+\setminus\{0\})\subset X_i^{++},\quad i=1,2,3,
$$
and
$$
\big(V_{a_i,\nu_i,\alpha_i}^i\big)^n (X_i^+\setminus\{0\})\subset X_i^{++},\quad i=1,2,3.
$$
Then by Krein-Rutman Theorem,
\begin{equation}
\label{radius-in-spectrum}
r(U^i_{a_i,\nu_i,\alpha_i})\in\sigma(U^i_{a_i,\nu_i,\alpha_i}),\,\, \, r(V^i_{a_i,\nu_i,\alpha_i})\in\sigma(V^i_{a_i,\nu_i,\alpha_i}),
\end{equation}
and
$r(U^i_{a_i,\nu_i,\alpha_i})$ and $r(V^i_{a_i,\nu_i,\alpha_i})$ are isolated algebraically simple eigenvalues of $U^i_{a_i,\nu_i,\alpha_i}$ and
$V^i_{a_i,\nu_i,\alpha_i}$ with positive eigenfunctions, respectively.

\begin{proposition}
\label{relation-prop}
\begin{itemize}
\item[(1)]  $\alpha_i>h_{i,\max}$ is an eigenvalue of $\nu_i\mathcal{K}_i+h_i(\cdot)\mathcal{I}$ with $\phi(x)$ being an eigenfunction iff $1$ is an eigenvalue
of $U^i_{a_i,\nu_i,\alpha_i}$ with $\psi(x)=(\alpha_i-h_i(x))\phi(x)$ being an eigenfunction.

\item[(2)] $\alpha_i>h_{i,\max}$ is an eigenvalue of $\nu_i\mathcal{K}_i+h_i(\cdot)\mathcal{I}$ with $\phi(x)$ being an eigenfunction iff $1$ is an eigenvalue
of $V^i_{a_i,\nu_i,\alpha_i}$ with $\phi(x)$ being an eigenfunction.
\end{itemize}
\end{proposition}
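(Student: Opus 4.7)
The plan is to prove both equivalences by a direct algebraic manipulation of the eigenvalue equation, using that $\alpha_i - h_i(x) > 0$ everywhere (since $\alpha_i > h_{i,\max}$), so that division by this factor is well-defined and the resulting operations are reversible. I will handle the three cases $i=1,2,3$ uniformly: the algebra does not see the difference between integration over $D$ (cases $i=1,2$) and integration over $\RR^N$ under periodic boundary conditions (case $i=3$), so I only need to write the argument once.

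For part (2), I will start from the eigenvalue relation
\begin{equation*}
\nu_i (\mathcal{K}_i\phi)(x) + h_i(x)\phi(x) = \alpha_i \phi(x),
\end{equation*}
rewrite it as $\nu_i(\mathcal{K}_i\phi)(x) = (\alpha_i - h_i(x))\phi(x)$, and divide both sides by $\alpha_i - h_i(x)$, which is strictly positive and continuous on the relevant domain. The right-hand side becomes $\phi(x)$ and the left-hand side is precisely $(V^i_{a_i,\nu_i,\alpha_i}\phi)(x)$ by the definitions in \eqref{v-1-2-op} and \eqref{v-3-op}. Since every step is reversible (multiply through by $\alpha_i - h_i(x)$ and transpose), this gives the "iff" and also shows the eigenfunctions coincide.

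For part (1), I will use the substitution $\psi(x) = (\alpha_i - h_i(x))\phi(x)$. Because $\alpha_i - h_i(\cdot)$ is continuous and bounded away from zero on $\bar D$ (or on $\RR^N$ with periodicity), the map $\phi \mapsto \psi$ is a bijection from $X_i$ to $X_i$ whose inverse is $\psi \mapsto \psi/(\alpha_i - h_i(\cdot))$. Substituting $\phi(y) = \psi(y)/(\alpha_i - h_i(y))$ into the rearranged eigenvalue equation $\nu_i(\mathcal{K}_i\phi)(x) = (\alpha_i - h_i(x))\phi(x) = \psi(x)$ yields
\begin{equation*}
\nu_i \int \frac{k(y-x)\psi(y)}{\alpha_i - h_i(y)}\, dy = \psi(x),
\end{equation*}
which is exactly $U^i_{a_i,\nu_i,\alpha_i}\psi = \psi$ by \eqref{u-1-2-op} and \eqref{u-3-op}. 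Reversibility is again immediate.

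There is no real obstacle here; the statement is essentially a change-of-variables observation, and the only thing to be mindful of is to record explicitly that $\alpha_i > h_{i,\max}$ guarantees $\alpha_i - h_i(x)$ is bounded below by a positive constant, so all divisions and bijective correspondences between eigenfunctions are legitimate in the Banach spaces $X_i$. I will state both directions symmetrically so the "iff" and the claimed correspondence between eigenfunctions are both evident from the same computation.
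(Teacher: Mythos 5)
Your proof is correct and takes essentially the same approach as the paper, which simply states the result follows directly from the definitions of $U^i_{a_i,\nu_i,\alpha_i}$ and $V^i_{a_i,\nu_i,\alpha_i}$; you have merely written out the algebraic substitution and the division by $\alpha_i - h_i(x)$ that the paper leaves implicit.
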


\begin{proof}
It follows directly from the definitions of $U^i_{a_i,\nu_i,\alpha_i}$ and $V^i_{a_i,\nu_i,\alpha_i}$.
\end{proof}

 \begin{proposition}
 \label{radius-continuous-prop}  Let $1\le i\le 3$ be given.
\begin{itemize}
\item[(a)]
$r(U^i_{a_i,\nu_i,\alpha_i})$ is continuous in $\alpha_i (>h_{i,\max})$,  strictly decreases as $\alpha_i$ increases,  and $r(U^i_{a_i,\nu_i,\alpha_i})\to 0$ as $\alpha_i\to\infty$.

\item[(b)]  $r(V^i_{a_i,\nu_i,\alpha_i})$ is continuous in $\alpha_i (>h_{i,\max})$,  strictly decreases as $\alpha_i$ increases,  and $r(V^i_{a_i,\nu_i,\alpha_i})\to 0$ as $\alpha_i\to\infty$.
\end{itemize}
\end{proposition}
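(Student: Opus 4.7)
The plan is to address the three assertions — continuity in $\alpha_i$, strict monotonicity in $\alpha_i$, and the limit $r\to 0$ as $\alpha_i\to\infty$ — separately for $U^i_{a_i,\nu_i,\alpha_i}$, noting that the argument for $V^i_{a_i,\nu_i,\alpha_i}$ proceeds identically since both operators are positive compact operators on $X_i$ whose $\alpha_i$-dependence enters only through the reciprocal factor $1/(\alpha_i-h_i(\cdot))$, and both have their spectral radius realized as an isolated algebraically simple eigenvalue with a positive eigenfunction (already noted via Krein--Rutman just before the proposition).

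For continuity, I would first show that $\alpha_i\mapsto U^i_{a_i,\nu_i,\alpha_i}$ is continuous in the operator norm on $(h_{i,\max},\infty)$. For $\alpha_i^1,\alpha_i^2$ in a compact subinterval of $(h_{i,\max},\infty)$, the kernel difference $\nu_i k(y-x)\bigl[1/(\alpha_i^1-h_i(y))-1/(\alpha_i^2-h_i(y))\bigr]$ is uniformly bounded in $y$ by $C\,|\alpha_i^1-\alpha_i^2|$, so the operator norms of $U^i_{a_i,\nu_i,\alpha_i^1}$ and $U^i_{a_i,\nu_i,\alpha_i^2}$ differ by $O(|\alpha_i^1-\alpha_i^2|)$. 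Continuity of $r(U^i_{a_i,\nu_i,\alpha_i})$ then follows from the standard perturbation theory of isolated algebraically simple eigenvalues of compact operators.

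The strict monotonicity is the main obstacle. Fix $h_{i,\max}<\alpha_i^1<\alpha_i^2$ and let $\psi\in X_i^{++}$ be a positive eigenfunction of $U^i_{a_i,\nu_i,\alpha_i^2}$ corresponding to $r_2:=r(U^i_{a_i,\nu_i,\alpha_i^2})$. Since $1/(\alpha_i^1-h_i(y))>1/(\alpha_i^2-h_i(y))$ pointwise and $\psi(y)>0$, a direct pointwise comparison of the integrands gives
$$(U^i_{a_i,\nu_i,\alpha_i^1}\psi)(x)>(U^i_{a_i,\nu_i,\alpha_i^2}\psi)(x)=r_2\,\psi(x)$$
for every admissible $x$. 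Both sides are continuous, the effective domain is compact ($\bar D$ for $i=1,2$, a torus for $i=3$), and $\psi$ is bounded away from $0$, so the quotient $(U^i_{a_i,\nu_i,\alpha_i^1}\psi)(x)/\psi(x)$ admits a strict positive minimum, producing $\epsilon>0$ with $U^i_{a_i,\nu_i,\alpha_i^1}\psi\geq(r_2+\epsilon)\psi$. Invoking the Collatz--Wielandt characterization
$$r(U^i_{a_i,\nu_i,\alpha_i^1})=\sup\bigl\{\lambda\geq 0\,:\,\exists\,\phi\in X_i^+\setminus\{0\}\text{ with }U^i_{a_i,\nu_i,\alpha_i^1}\phi\geq\lambda\phi\bigr\},$$
valid for strongly positive compact operators, yields $r(U^i_{a_i,\nu_i,\alpha_i^1})\geq r_2+\epsilon>r_2$.

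The limit as $\alpha_i\to\infty$ is immediate from
$$\|U^i_{a_i,\nu_i,\alpha_i}\|_{X_i\to X_i}\leq\frac{\nu_i}{\alpha_i-h_{i,\max}}\longrightarrow 0$$
(using $\int k\leq 1$) together with $r(T)\leq\|T\|$; the same estimate works for $V^i_{a_i,\nu_i,\alpha_i}$. The decisive step is the strict monotonicity argument — one must convert a pointwise strict inequality into a strict separation of spectral radii, which is exactly what the strong positivity combined with the compactness/continuity of $\psi$ delivers via Collatz--Wielandt.
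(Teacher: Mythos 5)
Your continuity and $\alpha_i\to\infty$ arguments match the paper's in substance: continuity is obtained from operator-norm continuity plus perturbation theory for the isolated simple eigenvalue, and the decay to $0$ follows from the operator-norm bound $\|U^i_{a_i,\nu_i,\alpha_i}\|\le\nu_i/(\alpha_i-h_{i,\max})$ together with $r\le\|\cdot\|$.

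For strict monotonicity, however, you take the dual of the paper's route, and the comparison is worth spelling out. The paper fixes the positive eigenfunction $\phi$ of $U^i_{a_i,\nu_i,\alpha_i}$ at the \emph{smaller} parameter $\alpha_i$ and observes the quantitative factorization
\[
\frac{1}{\tilde\alpha_i-h_i(y)}=\frac{1}{\alpha_i-h_i(y)}\cdot\frac{1}{1+\tfrac{\tilde\alpha_i-\alpha_i}{\alpha_i-h_i(y)}}\le\frac{1}{1+\delta_1}\cdot\frac{1}{\alpha_i-h_i(y)},
\]
which directly gives $U^i_{a_i,\nu_i,\tilde\alpha_i}\phi\le\frac{1}{1+\delta_1}\,r(U^i_{a_i,\nu_i,\alpha_i})\,\phi$ and hence $r(U^i_{a_i,\nu_i,\tilde\alpha_i})\le r(U^i_{a_i,\nu_i,\alpha_i})/(1+\delta_1)$. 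No compactness step is required because the factor $1/(1+\delta_1)$ is already uniform in $y$; the argument yields an explicit multiplicative gap. You instead fix the eigenfunction at the \emph{larger} parameter, deduce a pointwise strict inequality $U^i_{a_i,\nu_i,\alpha_i^1}\psi>r_2\psi$, pass from pointwise to uniform via compactness of $\bar D$ (and positivity/continuity of $\psi$), and then invoke the Collatz--Wielandt lower bound. Both routes are sound; your compactness-plus-Collatz--Wielandt step is a legitimate substitute for the paper's use of the companion upper bound ``$U\phi\le c\phi$ with $\phi\gg 0$ implies $r(U)\le c$''. The paper's version has the advantage of delivering a concrete quantitative rate of decrease, while yours only produces \emph{some} $\epsilon>0$ gap; but for the qualitative statement being proved, that distinction is immaterial.
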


\begin{proof}
We prove (a) in the case $i=1$. The other cases can be proved similarly.

First, note that $r(U_{a_1,\nu_1,\alpha_1}^1)$ is an isolated algebraically simple eigenvalue of $U_{a_1,\nu_1,\alpha_1}^1$.
It then follows from the perturbation theory of the spectrum of bounded operators that $r(U_{a_1,\nu_1,\alpha_1}^1)$ is continuous in $\alpha_1(>h_{1,\max})$.

Next, we prove that $r(U_{a_1,\nu_1,\alpha_1}^1)$  is strictly decreasing as $\alpha_1$ increases. To this end, fix any $\alpha_1>h_{1,\max}$.  Let
$\phi_1(\cdot)$ be a positive eigenfunction of $U_{a_1,\nu_1,\alpha_1}^1$ corresponding to the eigenvalue $r(U_{a_1,\nu_1,\alpha_1}^1)$.
Note that for any given $\tilde\alpha_1>\alpha_1$, there is $\delta_1>0$ such that
$$
\frac{\tilde\alpha_1-\alpha_1}{\alpha_1-h_1(x)}>\delta_1\quad \forall \,\, x\in\bar D.
$$
This implies that
\begin{align*}
\big(U_{a_1,\nu_1,\tilde\alpha_1}^1\phi_1\big)(x)&=\int_D \frac{\nu_1 k(y-x)\phi_1(y)}{\tilde\alpha_1-h_1(y)}dy\\
&=\int_D \frac{\nu_1 k(y-x)\phi_1(y)}{\alpha_1-h_1(y)}\cdot\frac{1}{1+\frac{\tilde\alpha_1-\alpha_1}{\alpha_1-h_1(y)}}dy\\
&\le \frac{1}{1+\delta_1}\int_D \frac{\nu_1k(y-x)\phi_1(y)}{\alpha_1-h_1(y)}dy\\
&=\frac{r(U_{a_1,\nu_1,\alpha_1}^1)}{1+\delta_1}\phi_1(x)\quad \forall\,\, x\in\bar D.
\end{align*}
It then follows that
$$
r(U_{a_1,\nu_1,\tilde\alpha_1}^1)\le \frac{r(U_{a_1,\nu_1,\alpha_1}^1)}{1+\delta_1}<r(U_{a_1,\nu_1,\alpha_1}^1),
$$
and hence $r(U_{a_1,\nu_1,\alpha_1}^1)$ is strictly decreasing as $\alpha_1$ increases.

Finally, we prove that $r(U^1_{a_1,\nu_1,\alpha_1})\to 0$ as $\alpha_1\to\infty$.
Note that for any $\epsilon>0$, there is $\alpha_1^*>0$ such that for $\alpha_1>\alpha_1^*$,
$$
\int_D \frac{\nu_1 k(y-x)}{\alpha_1-h_1(y)}dy<\epsilon \quad \forall\,\, x\in\bar D.
$$
This implies that
$$
\|U_{a_1,\nu_1,\alpha_1}^1\|<\epsilon\quad \forall\,\, \alpha_1>\alpha_1^*.
$$
Hence  $r(U^1_{a_1,\nu_1,\alpha_1})\to 0$ as $\alpha_1\to\infty$.
\end{proof}

\begin{proposition}
\label{sufficient-cond-prop2}
Let $1\le i\le 3$ be given.
\begin{itemize}
\item[(a)] If  there is $\alpha_i>h_{i,\max}$ such that $r(U^i_{a_i,\nu_i,\alpha_i})>1$, then
$\tilde\lambda_i(\nu_i,a_i)>h_{i,\max}$.

\item[(b)]  If there is $\alpha_i>h_{i,\max}$ such that $r(V^i_{a_i,\nu_i,\alpha_i})>1$, then
$\tilde\lambda_i(\nu_i,a_i)>h_{i,\max}$.

\end{itemize}
\end{proposition}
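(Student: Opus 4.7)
The plan is to exploit the continuity and monotonicity of $r(U^i_{a_i,\nu_i,\alpha_i})$ and $r(V^i_{a_i,\nu_i,\alpha_i})$ established in Proposition \ref{radius-continuous-prop} to locate some $\tilde\alpha_i > h_{i,\max}$ at which the relevant spectral radius equals exactly $1$. Proposition \ref{relation-prop} will then promote this to an honest eigenvalue of $\nu_i\mathcal{K}_i + h_i(\cdot)\mathcal{I}$ that sits strictly above $h_{i,\max}$, which by definition of $\tilde\lambda_i(\nu_i,a_i)$ yields the desired conclusion.

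Details for part (a): by hypothesis $r(U^i_{a_i,\nu_i,\alpha_i}) > 1$ for some $\alpha_i > h_{i,\max}$, while Proposition \ref{radius-continuous-prop}(a) gives that the map $\alpha \mapsto r(U^i_{a_i,\nu_i,\alpha})$ is continuous and strictly decreasing on $(h_{i,\max},\infty)$ and tends to $0$ as $\alpha\to\infty$. The intermediate value theorem then produces $\tilde\alpha_i > \alpha_i > h_{i,\max}$ with $r(U^i_{a_i,\nu_i,\tilde\alpha_i}) = 1$. Since some iterate of $U^i_{a_i,\nu_i,\tilde\alpha_i}$ maps $X_i^+\setminus\{0\}$ into $X_i^{++}$, the Krein--Rutman-type statement recorded around \eqref{radius-in-spectrum} gives a positive eigenfunction $\psi \in X_i^{++}$ with $U^i_{a_i,\nu_i,\tilde\alpha_i}\psi = \psi$. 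Proposition \ref{relation-prop}(1) then translates this into the statement that $\tilde\alpha_i$ is an eigenvalue of $\nu_i\mathcal{K}_i + h_i(\cdot)\mathcal{I}$ with positive eigenfunction $\phi(x) = \psi(x)/(\tilde\alpha_i - h_i(x))$, which is well-defined and positive because $\tilde\alpha_i > h_{i,\max}$. In particular $\tilde\alpha_i \in \sigma(\nu_i\mathcal{K}_i + h_i(\cdot)\mathcal{I})$, so
$$\tilde\lambda_i(\nu_i,a_i) \;\geq\; \tilde\alpha_i \;>\; h_{i,\max},$$
as required.

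Part (b) is obtained by the identical chain of reasoning, replacing $U^i_{a_i,\nu_i,\alpha_i}$ by $V^i_{a_i,\nu_i,\alpha_i}$ and using Proposition \ref{radius-continuous-prop}(b) together with Proposition \ref{relation-prop}(2); the eigenfunction of $V^i_{a_i,\nu_i,\tilde\alpha_i}$ is already the eigenfunction of $\nu_i\mathcal{K}_i + h_i(\cdot)\mathcal{I}$, so no conversion factor is needed. There is no real obstacle here: the proof is essentially a concatenation of already-established results, and the only point worth checking is that the $\tilde\alpha_i$ yielded by the intermediate value theorem indeed stays strictly above $h_{i,\max}$, which is automatic since $\tilde\alpha_i > \alpha_i > h_{i,\max}$.
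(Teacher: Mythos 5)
Your proof is correct and follows essentially the same route as the paper: use the continuity, strict monotonicity, and decay at infinity of the spectral radius from Proposition \ref{radius-continuous-prop} to find $\tilde\alpha_i>h_{i,\max}$ with radius exactly $1$, then invoke Proposition \ref{relation-prop} to place $\tilde\alpha_i$ in $\sigma(\nu_i\mathcal{K}_i+h_i(\cdot)\mathcal{I})$, so that $\tilde\lambda_i(\nu_i,a_i)\ge\tilde\alpha_i>h_{i,\max}$. The only difference is that you spell out the Krein--Rutman step yielding the positive eigenfunction, which the paper treats as already established around \eqref{radius-in-spectrum}.
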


\begin{proof}
We prove (b). (a) can be proved similarly.

Fix $1\le i\le 3$.
 Suppose  that there is $\alpha_i>h_{i,\max}$ such that $r(V^i_{a_i,\nu_i,\alpha_i})>1$. Then
By Proposition \ref{radius-continuous-prop},
there is $\alpha_0>h_{i,\max}$ such that
\begin{equation}
\label{radius-equal-1-eq2}
r(V^i_{a_i,\nu_i,\alpha_0})=1.
\end{equation}
By Proposition \ref{relation-prop}, $\alpha_0\in\sigma(\nu_i\mathcal{K}_i+h_i(\cdot)\mathcal{I})$. This implies that
$\tilde\lambda_i(\nu_i,a_i)\ge \alpha_0>h_{i,\max}.
$
\end{proof}

\begin{proposition}[Necessary and sufficient condition]
\label{iff-prop}
For given $1\leq i\leq 3$, $\lambda_i(\nu_i,a_i)$ exists if and only if
$\tilde\lambda_i(\nu_i,a_i)>h_{i, \max}$.
\end{proposition}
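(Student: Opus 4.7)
The plan is to set $L_i := \nu_i\mathcal{K}_i + h_i(\cdot)\mathcal{I}$ and exploit the factorization
\[
L_i - \alpha \;=\; -(\alpha - h_i(\cdot))\mathcal{I}\bigl(\mathcal{I} - V^i_{a_i,\nu_i,\alpha}\bigr),\qquad \alpha > h_{i,\max}.
\]
Since $(\alpha - h_i(\cdot))\mathcal{I}$ is invertible for such $\alpha$, this immediately upgrades Proposition \ref{relation-prop} from an eigenvalue statement to a full spectral equivalence: $\alpha \in \sigma(L_i)$ iff $1 \in \sigma(V^i_{a_i,\nu_i,\alpha})$, with eigenspaces and generalized eigenspaces in obvious bijection.

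For the \emph{only if} direction, suppose $\lambda_i(\nu_i,a_i)$ exists with positive eigenfunction $\phi$. Applying Proposition \ref{comparison-prop}(2) to the pair $0 \le \phi$, $0 \not\equiv \phi$, I get $0 \ll \Phi_i(t)\phi = e^{t\lambda_i}\phi$, so in fact $\phi \in X_i^{++}$. Rewriting the eigenvalue equation as $(\lambda_i - h_i(x))\phi(x) = \nu_i(\mathcal{K}_i\phi)(x)$ and using $k(0) > 0$ together with continuity of $k$ to force $(\mathcal{K}_i\phi)(x) > 0$ for every $x \in \bar D$, I conclude $\lambda_i > h_i(x)$ pointwise, hence $\tilde\lambda_i = \lambda_i > h_{i,\max}$.

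For the \emph{if} direction, I would assume $\tilde\lambda_i > h_{i,\max}$. Since $\tilde\lambda_i \in \sigma(L_i)$ by Proposition \ref{belong-prop}, the spectral equivalence gives $1 \in \sigma(V^i_{a_i,\nu_i,\tilde\lambda_i})$, so $r(V^i_{a_i,\nu_i,\tilde\lambda_i}) \ge 1$ by positivity. By Proposition \ref{radius-continuous-prop} (continuity, strict monotonicity in $\alpha$, limit $0$ at $\infty$), there is a unique $\alpha^* \ge \tilde\lambda_i$ with $r(V^i_{a_i,\nu_i,\alpha^*}) = 1$. Krein--Rutman applied to the compact positive operator $V^i_{a_i,\nu_i,\alpha^*}$ (whose iterate is strongly positive because $k(0) > 0$) shows that $1$ is an isolated algebraically simple eigenvalue with a positive eigenfunction $\psi$, strictly dominant in modulus among eigenvalues of $V^i_{a_i,\nu_i,\alpha^*}$. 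Via the correspondence, $\alpha^*$ is then an eigenvalue of $L_i$ with eigenfunction $\psi \in X_i^{++}$, which forces $\alpha^* \le \tilde\lambda_i$; combined with $\alpha^* \ge \tilde\lambda_i$, I obtain $\alpha^* = \tilde\lambda_i$, and algebraic simplicity for $L_i$ at $\tilde\lambda_i$ follows from algebraic simplicity for $V^i_{a_i,\nu_i,\tilde\lambda_i}$ at $1$ through the generalized-eigenspace bijection.

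The step I expect to be the main obstacle is ruling out any other $\mu \in \sigma(L_i)$ with $\mathrm{Re}\,\mu = \tilde\lambda_i$. Extending the spectral correspondence to complex $\alpha$ with $\mathrm{Re}\,\alpha > h_{i,\max}$, such $\mu$ would yield a complex eigenfunction $\phi$ with $V^i_\mu\phi = \phi$ and necessarily $\mathrm{Im}\,\mu \ne 0$. The elementary bound $|\mu - h_i(x)| \ge \tilde\lambda_i - h_i(x)$, strict when $\mathrm{Im}\,\mu \ne 0$, combined with $|\mathcal{K}_i\phi(x)| \le (\mathcal{K}_i|\phi|)(x)$, gives $|\phi|(x) = |V^i_\mu\phi|(x) < V^i_{\tilde\lambda_i}|\phi|(x)$ wherever $(\mathcal{K}_i|\phi|)(x) > 0$, which is everywhere by $k(0) > 0$. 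Pairing this strict pointwise inequality with a positive eigenfunction $\psi^*$ of the Banach-space dual $(V^i_{a_i,\nu_i,\tilde\lambda_i})^*$ at $1$ (again from Krein--Rutman on the dual) produces the contradiction $\langle\psi^*,|\phi|\rangle < \langle\psi^*,V^i_{\tilde\lambda_i}|\phi|\rangle = \langle\psi^*,|\phi|\rangle$. This Perron--Frobenius-type elimination of peripheral complex spectrum is the delicate part of the argument.
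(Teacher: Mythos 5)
Your proof is correct and actually fills a step that the paper's own proof glosses over, so let me compare the two routes.

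For the \emph{only-if} direction you use the pointwise identity $(\lambda_i - h_i(x))\phi(x) = \nu_i(\mathcal{K}_i\phi)(x)$ together with $\phi \gg 0$ (from Proposition \ref{comparison-prop}(2)) and $k(0)>0$ to force $(\mathcal{K}_i\phi)(x)>0$, hence $\lambda_i > h_i(x)$ for every $x$. This is more elementary than the paper's argument, which instead computes $\sigma_{\mathrm{ess}}(\nu_i\mathcal{K}_i+h_i\mathcal{I})=[h_{i,\min},h_{i,\max}]$ by Weyl invariance under the compact perturbation $\nu_i\mathcal{K}_i$, notes that an isolated finite-multiplicity eigenvalue lies in the discrete spectrum and so is disjoint from that interval, and combines this with $\tilde\lambda_i\ge h_{i,\max}$. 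Both are valid; yours avoids the essential-spectrum machinery.

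For the \emph{if} direction both you and the paper run the same core: $\tilde\lambda_i\in\sigma(L_i)$ by Proposition \ref{belong-prop}, transfer to the induced operator ($U^i$ in the paper, $V^i$ for you --- cosmetic, since the factorization $L_i-\alpha=-(\alpha-h_i)(\mathcal{I}-V^i_\alpha)$ makes the $V^i$ route especially transparent), deduce $r\ge 1$, and use Proposition \ref{radius-continuous-prop} plus Krein--Rutman to produce an isolated algebraically simple eigenvalue at $\alpha^*=\tilde\lambda_i$ with positive eigenfunction. Where you genuinely diverge is that Definition \ref{pev-def}(2) also requires $\mathrm{Re}\,\mu<\lambda_i$ for all other $\mu\in\sigma(L_i)$, i.e.\ one must exclude peripheral complex spectrum; the paper stops short of verifying this, while your Perron--Frobenius/duality argument (strict modulus gain from $|\mu-h_i(x)|>\tilde\lambda_i-h_i(x)$ when $\mathrm{Im}\,\mu\ne 0$, paired against a dual positive eigenfunctional of $(V^i_{\tilde\lambda_i})^*$) is exactly what is needed and is correct.

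One small imprecision in your last step: you claim $(\mathcal{K}_i|\phi|)(x)>0$ "everywhere" just from $k(0)>0$, but since $\phi$ is merely a nonzero complex eigenfunction, $|\phi|$ could a priori vanish on a set whose $\delta$-neighborhood misses some $x\in\bar D$. What the pairing argument actually needs is weaker and does hold: $|\phi|<V^i_{\tilde\lambda_i}|\phi|$ on the nonempty open set $\{|\phi|>0\}$, while the dual eigenfunctional $\psi^*$ assigns positive mass to every nonempty open subset of $\bar D$ (because an iterate of $V^i_{\tilde\lambda_i}$ is strongly positive, hence so is the corresponding iterate of its adjoint on $M(\bar D)^+$). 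With that correction the contradiction $\langle\psi^*,|\phi|\rangle<\langle\psi^*,|\phi|\rangle$ goes through.
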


\begin{proof}
For $1\leq i \leq 3$, $\nu_i\mathcal K_i$  is a compact operator. Hence $\nu_i\mathcal K_i+h_i(\cdot)\mathcal I$ can be viewed as compact perturbation of the operator $h_i(\cdot)\mathcal I$. Clearly, the essential spectrum
$\sigma_{\rm ess}(h_i\mathcal{I})$ of $h_i(\cdot)\mathcal I$ is given by
\[
\sigma_{\mathrm{ess}}({h_i \mathcal I})=[h_{i,\min},h_{i,\max}].
\]
Since the essential spectrum is invariant under compact perturbations (see \cite{Spectral theory}), we have
\[
\sigma_{\mathrm{ess}}(\nu_i\mathcal K_i+h_i\mathcal I)=[h_{i,\min},h_{i,\max}],
\]
where $\sigma_{ess}(\nu_i \mathcal K_i+h_i\mathcal I)$  is the essential spectrum of $\nu_i\mathcal K_i+h_i(\cdot)\mathcal I$. Let
\[
\sigma_{disc}(\nu_i\mathcal K_i+h_i\mathcal I)=\sigma(\nu_i\mathcal K_i+h_i\mathcal I)\backslash\sigma_{ess}(\nu_i\mathcal K_i+h_i\mathcal I).
\]
Note that if $\lambda\in \sigma_{disc}(\nu_i\mathcal K_i+h_i\mathcal I)$, then it is an isolated eigenvalue of finite multiplicity.

On the one hand, if $\tilde \lambda_i(\nu_i, a_i)>h_{i, \max}(x)$,
then $\tilde \lambda_i(\nu_i,a_i) \in \sigma_{\text{disc}}(\nu_i \mathcal K_i+h_i \mathcal I)$.
By Proposition \ref{relation-prop}, $1\in \sigma\Big(U^i_{a_i,\nu_i,\tilde \lambda_i(\nu_i, a_i)}\Big)$.
Hence
$$
r\Big(U^i_{a_i,\nu_i,\tilde \lambda_i(\nu_i, a_i)}\Big)\ge 1.
$$
By Proposition \ref{radius-continuous-prop}, there is $\tilde{\tilde\lambda}\ge \tilde\lambda_i(\nu_i,a_i)$ such that
$$
r\Big(U^i_{a_i,\nu_i,\tilde {\tilde\lambda}}\Big)=1.
$$
This together with Proposition \ref{relation-prop} implies that $\tilde{\tilde\lambda}$ is
an isolated algebraically simple eigenvalue of $\nu_i \mathcal K_i+h_i (\cdot)\mathcal I$ with a positive eigenfunction.
 By Definition 2.1 (2), $\lambda_i(\nu_i, a_i)$  exists.

On the other hand, if $\lambda_i(\nu_i, a_i)$ exists, then $\tilde \lambda_i(\nu_i, a_i)= \lambda_i(\nu_i, a_i)\in \sigma_{\text{disc}}(\nu_i \mathcal K_i+h_i \mathcal I)$.
This implies that
 $\tilde\lambda_i(\nu_i,a_i)>h_{i, \max}(x)$.
\end{proof}

Finally, we present some variational characterization of the principal spectrum points of nonlocal dispersal operators when the kernel
function is symmetric.
In the rest of this subsection, we assume that $k(\cdot)$ is symmetric with respect to $0$.
Recall
\[
\mathcal K_3: X_3 \to X_3, \,\  (\mathcal K_3 u)(x)=\int_{\RR^N}k(y-x)u(y)dy \,\ \ \forall \,\, u\in X_3.
\]
For given $a\in X_3$, let
\begin{equation}
\label{periodic-kernal-eq}
\hat k(z)=\sum_{j_1,j_2,\cdots,j_N\in\ZZ} k(z+(j_1p_1,j_2p_2,\cdots, j_Np_N)),
\end{equation}
where $p_1, p_2, \cdots p_N$ are periods of $a(x)$.  Then $\hat k(\cdot)$ is also symmetric with respect to $0$ and
\begin{equation}
\label{K-P-equivalent-eq}
(\mathcal{K}_3u)(x)=\int_D \hat k(y-x)u(y)dy\quad \forall \,\, u\in X_3,
\end{equation}
where $D=[0, p_1]\times [0, p_2] \times \cdots \times [0, p_N]$ (see \eqref{periodic-domain-eq}).

\begin{proposition}
\label{variation-characterization-prop}
Assume that $k(\cdot)$ is symmetric with respect to $0$. Then
$$
\tilde\lambda_i(\nu_i,a_i)=\sup_{u\in L^2(D),\|u\|_{L^2(D)}=1}\int_D [\nu_i(\mathcal{K}_iu)(x)u(x)+h_i(x)u^2(x)]dx\quad (i=1, 2, 3).
$$
\end{proposition}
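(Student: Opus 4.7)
The plan is to realize $\mathcal{L}_i := \nu_i \mathcal{K}_i + h_i(\cdot)\mathcal{I}$ as a bounded self-adjoint operator on a Hilbert space, then invoke the standard variational characterization of the supremum of its spectrum. For $i=1,2$ the natural space is $L^2(D)$; for $i=3$ I would use $L^2(D)$ with $D$ as in \eqref{periodic-domain-eq}, first rewriting $\mathcal{K}_3 u = \int_D \hat k(y-x)u(y)\,dy$ via \eqref{K-P-equivalent-eq} using the periodized kernel $\hat k$ from \eqref{periodic-kernal-eq}, which inherits the symmetry $\hat k(-z)=\hat k(z)$ from $k$. Since the kernel $k(y-x)$ (respectively $\hat k(y-x)$) is symmetric in $(x,y)$ on $D\times D$ and bounded, Fubini gives $\langle \mathcal{K}_i u,v\rangle_{L^2}=\langle u,\mathcal{K}_i v\rangle_{L^2}$, while multiplication by the continuous function $h_i$ is obviously bounded self-adjoint. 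Hence $\mathcal{L}_i$ is bounded self-adjoint on $L^2(D)$, and the RHS of the claimed identity is exactly $\sup_{\|u\|_{L^2}=1}\langle\mathcal{L}_i u,u\rangle_{L^2}$; the standard variational formula for bounded self-adjoint operators then gives
$$
\lambda_i^* \;:=\; \sup_{\|u\|_{L^2(D)}=1}\int_D\bigl[\nu_i(\mathcal{K}_i u)(x)u(x)+h_i(x)u^2(x)\bigr]\,dx \;=\; \sup\sigma_{L^2}(\mathcal{L}_i).
$$

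It remains to identify $\lambda_i^*$ with $\tilde\lambda_i(\nu_i,a_i)$. The essential-spectrum argument used inside the proof of Proposition \ref{iff-prop} works unchanged on $L^2$: since $\nu_i\mathcal{K}_i$ is compact on both $X_i$ and $L^2(D)$, and $h_i(\cdot)\mathcal{I}$ on $L^2$ has essential range $[h_{i,\min},h_{i,\max}]$, the essential spectra of $\mathcal{L}_i$ on $X_i$ and on $L^2(D)$ both coincide with $[h_{i,\min},h_{i,\max}]$, so $\lambda_i^*\ge h_{i,\max}$ and $\tilde\lambda_i(\nu_i,a_i)\ge h_{i,\max}$. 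If $\tilde\lambda_i>h_{i,\max}$, Proposition \ref{iff-prop} furnishes a positive continuous eigenfunction $\phi\in X_i\subset L^2(D)$ realizing $\tilde\lambda_i$ as an $L^2$ eigenvalue, so $\lambda_i^*\ge\tilde\lambda_i$. Conversely, if $\lambda_i^*>h_{i,\max}$, then $\lambda_i^*$ lies outside the $L^2$ essential spectrum and is therefore an isolated $L^2$ eigenvalue; rewriting the eigenrelation as $u(x)=\nu_i(\mathcal{K}_i u)(x)/(\lambda_i^*-h_i(x))$ and using that $\mathcal{K}_i u$ is continuous on $\bar D$ for every $u\in L^2(D)$ (because $k\in C^1$ has compact support), a one-step bootstrap yields $u\in X_i$, so $\lambda_i^*\in\sigma_{X_i}(\mathcal{L}_i)$ and hence $\lambda_i^*\le\tilde\lambda_i$. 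The boundary case $\lambda_i^*=h_{i,\max}=\tilde\lambda_i$ falls out of the same dichotomy, yielding the identity in every case.

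The main obstacle is exactly this reconciliation: a priori $\sigma_{X_i}(\mathcal{L}_i)$ and $\sigma_{L^2}(\mathcal{L}_i)$ are different Banach-space spectra, and only the $L^2$ one is controlled by the variational formula. The regularity bootstrap $u\in L^2\Rightarrow u\in C(\bar D)$ for putative eigenfunctions with eigenvalue above $h_{i,\max}$, combined with the coincidence of essential spectra inherited from the proof of Proposition \ref{iff-prop}, is what identifies the two suprema.
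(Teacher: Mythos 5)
Your proposal is correct and follows essentially the same route as the paper: view $\nu_i\mathcal{K}_i+h_i(\cdot)\mathcal{I}$ as a bounded self-adjoint operator on $L^2(D)$ (using the periodized kernel for $i=3$), apply the classical variational characterization there, and then identify the $C(\bar D)$ and $L^2(D)$ principal spectrum points via the coincidence of essential spectra ($[h_{i,\min},h_{i,\max}]$) together with the eigenvalue dichotomy above $h_{i,\max}$. The only cosmetic difference is that you make the regularity bootstrap $u\in L^2\Rightarrow u\in C(\bar D)$ explicit, whereas the paper asserts the cross-space eigenvalue transfer without spelling it out.
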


\begin{proof}
First of all, note that $\nu_i\mathcal{K}_i+h_i(\cdot)\mathcal{I}$ is also a bounded operator on $L^2(D)$ and $\nu_i\mathcal{K}_i$ is a compact operator on
$L^2(D)$, where $\mathcal{K}_i$ is defined as in \eqref{K-P-equivalent-eq} when $i=3$. Let $\sigma(\nu_i\mathcal{K}_i+h_i\mathcal{I},L^2(D))$ be the spectrum of
$\nu_i\mathcal{K}_i+h_i(\cdot)\mathcal{I}$ considered on $L^2(D)$ and
$$
\tilde\lambda(\nu_i,a_i,L^2(D))=\sup\{{\rm Re}\lambda\,|\, \lambda\in\sigma (\nu_i\mathcal{K}_i+h_i\mathcal{I},L^2(D))\}.
$$
Then we also have
$$
\tilde \lambda(\nu_i,a_i,L^2(D))\in\sigma (\nu_i\mathcal{K}_i+h_i\mathcal{I},L^2(D)),
$$
$$
[h_{i,\min},h_{i,\max}]\subset  \sigma (\nu_i\mathcal{K}_i+h_i\mathcal{I},L^2(D)),
$$
and
$$
\tilde \lambda(\nu_i,a_i,L^2(D))\ge h_{i,\max}.
$$
Moreover, if $\tilde\lambda_i(\nu_i,a_i)>h_{i,\max}$ (resp. $\tilde\lambda_i(\nu_i,a_i,L^2(D))>h_{i,\max}$), then
$ \tilde\lambda_i(\nu_i,a_i)$ (resp. $\tilde\lambda_i(\nu_i,a_i,L^2(D))$) is an eigenvalue of $\nu_i\mathcal{K}_i+h_i\mathcal{I}$ considered
on $L^2(D)$ (resp. $C(\bar D)$) and hence $\tilde\lambda_i(\nu_i,a_i,L^2(D))\ge\tilde\lambda_i(\nu_i,a_i)$ (resp. $\tilde\lambda_i(\nu_i,a_i)\ge \tilde\lambda_i(\nu_i,a_i,L^2(D))$).
We then must have
$$\tilde\lambda_i(\nu_i,a_i)=\tilde\lambda_i(\nu_i,a_i,L^2(D)).$$

Assume  now that  $k(\cdot)$ is symmetric with respect to $0$, that is, $k(-z)=k(z)$ for any $z\in\RR^N$. Then for any $u,  v\in L^2(D)$, in the case $i=1,2$,
\begin{align*}
\int_D (\mathcal{K}_i u)(x) v(x)dx&=\int_D\int_D k(y-x)u(y)v(x)dydx\\
&=\int_D\int_D k(x-y)u(x)v(y)dxdy\\
&=\int_D \int_D k(y-x)v(y)u(x)dydx\\
&=\int_D (\mathcal {K}_iv)(x)u(x)dx
\end{align*}
and in the case $i=3$,
\begin{align*}
\int_D (\mathcal{K}_3 u)(x) v(x)dx&=\int_D\int_D \hat k(y-x)u(y)v(x)dydx\\
&=\int_D\int_D\hat k(x-y)u(x)v(y)dxdy\\
&=\int_D \int_D \hat k(y-x)v(y)u(x)dydx\\
&=\int_D (\mathcal {K}_3v)(x)u(x)dx.
\end{align*}
Therefore $\mathcal{K}_i:L^2(D)\to L^2(D)$ is self-adjoint. By classical variational formula (see \cite{DoVa}), we have
$$
\tilde\lambda_i(\nu_i,a_i,L^2(D))=\sup_{u\in L^2(D),\|u\|_{L^2(D)}=1}\int_D [\nu_i(\mathcal{K}_iu)(x)u(x)+h_i(x)u^2(x)]dx.
$$
The proposition then follows.
\end{proof}

\subsection{A technical lemma}

In this subsection, we provide a useful technical lemma.

\begin{lemma}
\label{technical-lm}
Let $1\le i\le 3$ and $a_i\in X_i$ be given. For any $\epsilon>0$, there is $a_i^\epsilon\in X_i$ such that
$$
\|a_i-a_i^\epsilon\|<\epsilon,
$$
 $h_i^\epsilon(x)=-\nu_i+a_i^\epsilon(x)$ for $i=1$ or $3$ and
$h_i^\epsilon(x)=-\nu_i \int_D k(y-x)dy+a_i^\epsilon(x)$ for $i=2$ is in $C^N$, and   satisfies the following vanishing condition: there is $x_0\in {\rm Int}(D)$ such that
$h_i^\epsilon(x_0)=\max_{x\in\bar D}h_i^\epsilon(x)$ and the partial derivatives of $h_i^\epsilon(x)$ up to order $N-1$ at
$x_0$ are zero.
\end{lemma}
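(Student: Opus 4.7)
The plan is to perform two successive approximations: first a $C^N$-smoothing of $h_i$, and then a localized modification that flattens it on a small interior ball, creating a plateau where all partial derivatives vanish.

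\textbf{Step 1 (smooth approximation).} For $i=1$ or $i=3$, since $h_i=-\nu_i+a_i$, standard mollification (using a periodic mollifier when $i=3$, so that periodicity of $a_3$ is inherited) produces $\tilde a_i\in C^\infty$ with $\|a_i-\tilde a_i\|<\epsilon/2$, and I set $\tilde h_i=-\nu_i+\tilde a_i$. For $i=2$, because the kernel integral $\int_D k(y-x)\,dy$ is only known to be $C^1$, I would instead mollify $h_2$ directly to obtain a $C^\infty$ function $\tilde h_2$ with $\|h_2-\tilde h_2\|<\epsilon/2$, and postpone defining $a_2^\epsilon$ until after the second step.

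\textbf{Step 2 (interior plateau).} Let $M=\max_{\bar D}\tilde h_i$. Using that $\mathrm{Int}(D)$ is dense in $\bar D$ and $\tilde h_i$ is continuous, pick $x_0\in\mathrm{Int}(D)$ with $\tilde h_i(x_0)>M-\epsilon/6$, and then $r>0$ so small that $\overline{B(x_0,r)}\subset\mathrm{Int}(D)$ and $|\tilde h_i(x)-\tilde h_i(x_0)|<\epsilon/6$ on $B(x_0,r)$. Choose $\phi_0\in C^\infty_c(\RR^N)$ with $0\le\phi_0\le 1$, $\phi_0\equiv 1$ on $B(x_0,r/2)$, and $\phi_0\equiv 0$ outside $B(x_0,r)$; for $i=3$ I replace $\phi_0$ by its $D$-periodic sum $\phi(x)=\sum_{j\in\ZZ^N}\phi_0(x-(j_1p_1,\ldots,j_Np_N))$, whose translates are mutually disjoint since $B(x_0,r)\subset\mathrm{Int}(D)$, so $\phi$ is periodic yet equals $\phi_0$ on $\bar D$. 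Define
\[
h_i^\epsilon(x)=\tilde h_i(x)+\phi(x)\bigl(M+\tfrac{\epsilon}{6}-\tilde h_i(x)\bigr).
\]
On $B(x_0,r/2)$ we have $h_i^\epsilon\equiv M+\epsilon/6$, so $h_i^\epsilon$ is locally constant at $x_0$ and all partial derivatives of $h_i^\epsilon$ at $x_0$ vanish; on $B(x_0,r)$ the value is a convex combination of $\tilde h_i\le M$ and $M+\epsilon/6$, hence $\le M+\epsilon/6$; and outside $B(x_0,r)$, $h_i^\epsilon=\tilde h_i\le M$. Thus $x_0\in\mathrm{Int}(D)$ is a global maximizer of $h_i^\epsilon$ on $\bar D$, $h_i^\epsilon\in C^N(\bar D)$, and
\[
\|h_i^\epsilon-\tilde h_i\|\le\sup_{x\in B(x_0,r)}\bigl(M+\tfrac{\epsilon}{6}-\tilde h_i(x)\bigr)\le\bigl(M-\tilde h_i(x_0)\bigr)+\tfrac{\epsilon}{6}+\tfrac{\epsilon}{6}<\tfrac{\epsilon}{2},
\]
which combined with Step 1 gives $\|h_i-h_i^\epsilon\|<\epsilon$.

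\textbf{Step 3 (recovery of $a_i^\epsilon$).} Set $a_i^\epsilon=h_i^\epsilon+\nu_i$ for $i=1,3$ and $a_2^\epsilon=h_2^\epsilon+\nu_2\int_D k(y-x)\,dy$. Then $a_i^\epsilon\in X_i$ and $\|a_i-a_i^\epsilon\|=\|h_i-h_i^\epsilon\|<\epsilon$, and $h_i^\epsilon$ has the stated regularity, interior maximum, and vanishing of partial derivatives up to order $N-1$ (in fact of all orders) at $x_0$. The main technical wrinkle is in Step 2: when the maximum of $\tilde h_i$ is attained only on $\partial D$ we cannot simply place the plateau at a literal maximizer, and it is density of $\mathrm{Int}(D)$ in $\bar D$ together with continuity of $\tilde h_i$ that supplies the interior point $x_0$ where $\tilde h_i$ is within $\epsilon/6$ of its true supremum.
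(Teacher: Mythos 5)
Your proof is correct and takes essentially the same approach as the paper's: produce a $C^N$ (in fact $C^\infty$) approximation by mollification, then graft on a small flat plateau at an interior point so that all derivatives vanish at the maximizer. The paper orders the steps differently (it first moves the maximum into the interior with a continuous bump, builds a continuous plateau, extends past $\bar D$, and only then mollifies, relying on the plateau to survive mollification), whereas you mollify first and add the plateau with an explicit smooth cutoff $\phi$, which streamlines the error bookkeeping; the observation for $i=2$ that one should mollify $h_2$ rather than $a_2$ (since $\int_D k(y-x)\,dy$ is only $C^1$) matches exactly what the paper does by recovering $a_2^\epsilon$ from $h_2^\epsilon$ at the end.
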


\begin{proof}
 We prove the case  $i=2$. Other cases can be proved similarly.

 First, let $\tilde x_0\in \bar D$ be such that
$$
h_2(\tilde x_0)=\max_{x\in\bar D} h_2(x).
$$
For any $\epsilon>0$, there is $\tilde x_\epsilon \in {\rm Int}(D)$  such that
\begin{equation}
\label{eq1}
h_2(\tilde x_0)-h_2(\tilde x_\epsilon)<\frac{\epsilon}{3}.
\end{equation}
Let $\tilde\sigma>0$ be such that
$$
B(\tilde x_\epsilon,\tilde\sigma)\Subset D,
$$
where $B(\tilde x_\epsilon,\tilde\sigma)$ denotes the open ball with center $\tilde x_\epsilon$ and radius
$\tilde\sigma$.

Note that there is $\xi(\cdot)\in C(\bar D)$  such that $0\leq \xi(x)\leq 1$, $ \xi(\tilde x_\epsilon)=1$, and
${\rm supp}( \xi)\subset B(\tilde x_\epsilon,\tilde\sigma)$.
\begin{equation}
\label{eq0}
 h_{2,\epsilon}(x)=h_2(x)+\frac{\epsilon}{3} \xi(x).
\end{equation}
Then  $ h_{2,\epsilon}(\cdot)$ is continuous on $D$ and  $h_{2,\epsilon}(\cdot)$ attains its maximum in ${\rm Int}(D)$.

Let $\tilde D\subset \RR^N$ be such that $D\Subset \tilde D$. Note that $h_{2,\epsilon}(\cdot)$ can be continuously extended to $\tilde D$.
Without loss of generality, we may then assume that $h_{2,\epsilon}(\cdot)$ is a continuous function on $\tilde D$ and there is
$x_0\in {\rm Int}(D)$  such that
$h_{2,\epsilon}(x_0)=\sup_{x\in\tilde D}h_{2,\epsilon}(x)$.
Observe that there is $\sigma>0$ and $\bar h_{2,\epsilon}(\cdot)\in C(\tilde D)$  such that
$B(x_0,\sigma)\Subset D$,
\begin{equation}
\label{eq3}
0\leq\bar h_{2,\epsilon}(x)-h_{2,\epsilon}(x)\le\frac{ \epsilon}{3}\quad \forall \,  x\in\tilde D,
\end{equation}
and
$$
\bar h_{2,\epsilon}(x)=h_{2,\epsilon}(x_0)\quad \forall \, x\in B(x_0,\sigma).
$$

Let
$$
\eta(x)=\begin{cases} C\exp(\frac{1}{\|x\|^2-1})\quad &{\rm if}\,\ \|x\|<1,\cr\cr
0\quad &{\rm if}\,\ \|x\|\geq 1,
\end{cases}
$$
where $C>0$ is such that $\int_{\RR^N}\eta(x)dx=1$.
For given $\delta>0$, set
$$
\eta_\delta(x)=\frac{1}{\delta^N}\eta(\frac{x}{\delta}).
$$
Let
$$
h_{2,\epsilon,\delta}(x)=\int_{\tilde D}\eta_\delta(y-x)\bar h_{2,\epsilon}(y)dy.
$$
By \cite[Theorem 6, Appendix C]{Eva}, $ h_{2,\epsilon,\delta}(\cdot)$ is in $C^\infty(\tilde D)$ and when $0<\delta\ll 1$,
\begin{equation}
\label{eq4}
|h_{2,\epsilon,\delta}(x)-\bar h_{2,\epsilon}(x)|<\frac{\epsilon}{3}\quad \forall \, x\in\bar D.
\end{equation}
It is not difficulty to see that for $0<\delta\ll 1$,
$$
h_{2,\epsilon,\delta}(x)\leq
\bar h_{2,\epsilon}(x_0)\quad \forall x\in B(x_0,\sigma),
$$
and
$$
h_{2,\epsilon,\delta}(x)=\bar h_{2,\epsilon}(x_0)\quad
\forall x\in B(x_0,\sigma/2).
$$

Fix $0<\delta\ll 1$. Let
$$h_{2}^{\epsilon}(x)=h_{2,\epsilon,\delta}(x).
$$
Then
$h_{2}^{\epsilon}(\cdot)$ attains its maximum at some $x_0\in{\rm Int}(D)$,
and the partial derivatives of $h_{2}^{\epsilon}(\cdot)$ up to order $N-1$ at $x_0$ are zero.
Let
$$
a_{2}^{\epsilon}(x)=h_2^\epsilon(x)+\nu_2\int_D k(y-x)dy\quad\forall \, x\in\bar D.
$$
Then $a_{2}^{\epsilon}\in{X}_2$,
$-\nu_2\int_D k(y-x)dy+a_2^\epsilon(x)=h_2^\epsilon(x)$,  and
$$
\|a_2-a_{2}^{\epsilon}\|=\|h_2^\epsilon-h_2\|\leq \|h_2^\epsilon- \bar h_{2,\epsilon}\|+\|\bar h_{2,\epsilon}-h_{2,\epsilon}\|+\|h_{2,\epsilon}-h_2\|<\epsilon.
$$
 The
lemma is thus proved.
\end{proof}

\section{Effects of Spatial Variations and the Proof of Theorem \ref{spatial-effect-thm}}

In this section, we investigate the effects of spatial variations on the principal spectrum points/principal eigenvalues
of nonlocal dispersal operators and prove Theorem \ref{spatial-effect-thm}.

First of all,
for given $1\le i\le 3$ and $c_i\in\RR$, let
$$
X_i(c_i)=\{ a_i\in X_i\,|\, \hat a_i=c_i\}
$$
(see \eqref{spacial-average} for the definition of $\hat a_i$). For given $x_0\in\RR^N$ and $\sigma>0$, let
$$
B(x_0, \sigma)=\{y\in\RR^N\,|\, \|y-x_0\|<\sigma\}.
$$

\begin{proof}[Proof of Theorem \ref{spatial-effect-thm}]

(1) We first prove the case $i=1$.  Let $x_0\in\bar D$ be such that
$$
h_1(x_0)=h_{1,\max}.
$$
Note that there is $\epsilon_0>0$ such that
$$
0\le a_1(x_0)-a_1(x)<\nu_1\inf_{x\in\bar D}\int_D k(y-x)dy-\epsilon_0\le \nu_1\int_D k(y-x)dy-\epsilon_0\quad \forall\,\,  x\in\bar D.
$$
For any $0<\epsilon<\epsilon_0$, put
$$
\lambda_\epsilon=h_1(x_0)+\epsilon (=-\nu_1+a_1(x_0)+\epsilon).
$$
Then
\begin{align*}
\frac{\nu_1\int_D k(y-x)dy}{\lambda_\epsilon-h_1(x)}&=\frac{\nu_1\int_D k(y-x)dy}{a_1(x_0)-a_1(x)+\epsilon}\\
&\ge \frac{\nu_1\int_D k(y-x)dy}{\nu_1\int_D k(y-x)dy+\epsilon-\epsilon_0}\\
&>1\quad \forall x\in\bar D.
\end{align*}
This implies
$$
r(V^1_{a_1,\nu_1,\lambda_\epsilon})>1\quad \forall\,\,  0<\epsilon\ll 1.
$$
Then by Proposition \ref{sufficient-cond-prop2} (b), $\tilde\lambda_1(\nu_1,a_1)>h_{1,\max}$. By
Proposition \ref{iff-prop}, $\lambda_1(\nu_1,a_1)$ exists.

We now prove the case $i=2$. Similarly,
let $x_0\in\bar D$ be such that
$$
h_2(x_0)=h_{2,\max}.
$$
Note that there is $\epsilon_0>0$ such that
$$
0\le a_2(x_0)-a_2(x)<\nu_2\inf_{x\in\bar D}\int_D k(y-x)dy-\epsilon_0\le \nu_2\int_D k(y-x_0)dy-\epsilon_0.
$$
For any $0<\epsilon<\epsilon_0$, put
$$
\lambda_\epsilon=h_2(x_0)+\epsilon(=-\nu_2\int_D k(y-x_0)dy+a_2(x_0)+\epsilon).
$$
Then
\begin{align*}
\frac{\nu_2\int_D k(y-x)dy}{\lambda_\epsilon-h_2(x)}&=\frac{\nu_2\int_D k(y-x)dy}{a_2(x_0)-\nu_2 \int_D k(y-x_0)dy+\nu_2\int_D k(y-x)dy-a_2(x)+\epsilon}\\
&\ge \frac{\nu_2\int_D k(y-x)dy}{\nu_2\int_D k(y-x)dy+\epsilon-\epsilon_0}\\
&>1\quad \forall x\in\bar D.
\end{align*}
This again implies that
$$
r(V^2_{a_2,\nu_2,\lambda_\epsilon})>1\quad \forall\,\,  0<\epsilon\ll 1.
$$
Then by Proposition \ref{sufficient-cond-prop2} (b), $\tilde\lambda_2(\nu_2,a_2)>h_{2,\max}$. By
Proposition \ref{iff-prop}, $\lambda_2(\nu_2,a_2)$ exists.

(2) It can be proved by the similar arguments as in \cite[Theorem B(2)]{ShZh0}.
For the completeness, we provide a proof in the following.

 Let $x_0\in {\rm Int}(D)$ be such that $h_i(x_0)=h_{i,\rm{max}}$ and  the partial derivatives of $h_i(x)$ up to order $N-1$ at $x_0$ are zero. Then  there is $M>0$ such that
$$
h_i(x_0)-h_i(y)\leq M||x_0-y||^N \quad  \forall \,\ y\in D.
$$
Fix  $\sigma>0$  such that  $B(x_0, 2\sigma)\subset D$ and $B(0, 2\sigma)\Subset{\rm supp}(k(\cdot))$. Let $v^*\in X_i^+$ be such that
\begin{equation*}
v^*(x)=
\begin{cases}
1 \quad &\forall\,\ x\in B(x_0, \sigma),\\
0\quad & \forall\,\ x\in D\backslash B(x_0, 2\sigma).
\end{cases}
\end{equation*}
Clearly, for every $x\in D\backslash B(x_0, 2\sigma)$ and $\gamma>1$, we have
\begin{equation}
\label{thm1-eq1}
(U^i_{a_i,\nu_i, h_{i}(x_0)+\epsilon}v^*)(x)\ge \gamma v^*(x)=0\quad \forall\,\, \epsilon>0.
\end{equation}
Note that there is $\tilde M>0$ such that for any $x\in B(x_0, 2\sigma)$,
$$
k(y-x)\geq \tilde M \quad \forall \,\ y\in B(x_0, \sigma).
$$
It then follows that for $x\in B(x_0, 2\sigma)$
\begin{align*}
(U^i_{a_i,\nu_i,h_{i}(x_0)+\epsilon}v^*)(x)&=\int_D\frac{\nu_ik(y-x)v^*(y)}{h_i(x_0)+\epsilon-h_i(y)}dy\\
&\geq \mathcal{\int}_{B(x_0, \sigma)}\frac{\nu_ik (y-x)}{M||x_0-y||^N+\epsilon}dy\\
&\geq \mathcal{\int}_{B(x_0, \sigma)}\frac{\nu_i \tilde M}{M||x_0-y||^N+\epsilon}dy.
\end{align*}
Notice that $\mathcal{\int}_{B(x_0, \sigma)}\frac{\tilde M}{M||x_0-y||^N}dy=\infty$. This implies that for $0<\epsilon \ll 1$, there is $\gamma>1$ such that
\begin{equation}
\label{thm1-eq2}
(U^i_{a_i,\nu_i,h_{i}(x_0)+\epsilon}v^*)(x)>\gamma v^*(x) \quad \forall \,\ x \in B(x_0, 2\sigma).
\end{equation}
By \eqref{thm1-eq1} and \eqref{thm1-eq2},
$$
U^i_{a_i,\nu_i,h_{i}(x_0)+\epsilon} v^*(x)\ge \gamma v^*(x)\quad \forall\,\ x\in D.
$$
Hence, $r(U^i_{a_i,\nu_i,h_{i}(x_0)+\epsilon})>1$. By Proposition \ref{sufficient-cond-prop2}(a),  $\tilde \lambda_i(\nu_i, a_i)>h_i(x_0)=h_{i, \max}$. By Proposition \ref{iff-prop}, the principle eigenvalue $\lambda_i(\nu_i, a_i)$ exists.

(3) Recall that $\tilde \lambda_i(\nu_i, \tilde a)=\sup\{\mathrm{Re}\mu|\mu\in \sigma(\nu_i\mathcal K_i+\tilde h_i(\cdot)\mathcal I)\}$
with $\tilde h_i(x)=-\nu_i+\tilde a(x)$ for $i=1, 3 $ and
  $\tilde h_i(x)=-\nu_2\int_D k(y-x) d y+\tilde a(x)$ for $i=2$.
 By the arguments of Proposition \ref{iff-prop},
  $$
  \sigma_{\mathrm{ess}}(\nu_i\mathcal K_i+\tilde h_i\mathcal I)=[\min_{x\in \bar D}\tilde h_i(x), \max_{x\in \bar D}\tilde h_i(x)].
  $$
  Note that
  $$\sup_{\tilde a\in X_i(c_i)}(\max_{x\in \bar D}\tilde a(x))=\infty.
  $$
 Then
\[
\sup_{\tilde a\in X_i(c_i)}\tilde \lambda_i(\nu_i, \tilde a)\geq \sup_{\tilde a\in X_i(c_i)}(\max_{x\in D}\tilde h_i(x))\geq-\nu_i+\sup_{\tilde a\in X_i(c_i)}(\max_{x\in D}\tilde a(x))=\infty.
\]

(4) We first assume that the principal eigenvalue $\lambda_2(\nu_2,a_2)$ exists.
 Suppose that $u_2(x)$ is a strictly positive principal eigenfunction with respect to the eigenvalue $\lambda_2(\nu_2, a_2)$. We divide both sides of (1.2) by $u_2(x)$ and integrate
 with respect to $x$ over $D$ to obtain
\[
\int_D\left[\frac{\nu_2[\int_Dk(y-x)(u_2(y)-u_2(x))dy]+a_2(x)u_2(x)}{u_2(x)}\right]dx=\int_D\lambda_2(\nu_2, a_2)dx,
\]
or
\begin{align*}
\lambda_2(\nu_2, a_2)&=\frac{\nu_2}{|D|}\int_D\int_Dk(y-x)\frac{u_2(y)-u_2(x)}{u_2(x)}dydx+\frac{1}{|D|}\int_Da_2(x)dx\\
&=\frac{\nu_2}{|D|}\int_D\int_Dk(y-x)\frac{u_2(y)-u_2(x)}{u_2(x)}dydx+\hat a_2.
\end{align*}
By the symmetry of $k(\cdot)$,
\begin{align}
\label{thm2-1-eq1}
&\int_D\int_Dk(y-x)\frac{u_2(y)-u_2(x)}{u_2(x)}dydx\nonumber\\
&=\frac{1}{2}\int\int_{D\times D} k(y-x)\frac{u_2(y)-u_2(x)}{u_2(x)}dydx+\frac{1}{2}\int\int_{D\times D}k(y-x)\frac{u_2(y)-u_2(x)}{u_2(x)}dydx\nonumber\\
&=\frac{1}{2}\int\int_{D\times D}k(y-x)\frac{u_2(y)-u_2(x)}{u_2(x)}dydx+\frac{1}{2}\int\int_{D\times D}k(y-x)\frac{u_2(x)-u_2(y)}{u_2(y)}dydx\nonumber\\
&=\frac{1}{2}\int\int_{D\times D}k(y-x)\frac{(u_2(y)-u_2(x))^2}{u_2(x)u_2(y)}dydx\nonumber\\
&\geq 0.
\end{align}
So,
\[
\inf\{\lambda_2(\nu_2, a_2)| a_2\in X_2, \hat a_2=c_2\}\geq \hat a_2=c_2.
\]
And clearly, $ \lambda_2(\nu_2, \hat a_2)=\hat a_2$. Together, we get
\[
\inf\{\lambda_2(\nu_2, a_2)| a_2\in X_2, \hat a_2=c_2\}= \lambda_2(\nu_2, \hat a_2)=c_2.
\]

Second, by Lemma 3.1, for   any $\epsilon>0$, there is $a_2^{\epsilon}\in X_2\cap C^N$, such that
\[
\|a_2-a_2^{\epsilon}\|<\epsilon,
\]
and $h_2^{\epsilon}(\cdot)\in C^N (=-\nu_2\int_D k(y-x)dy+a_2^{\epsilon})$  satisfies the vanishing condition in Theorem 2.1 (2). So, the principal eigenvalue $\lambda_2(\nu_2,a_2^{\epsilon})$
exists and
$\tilde \lambda_2(\nu_2, a_2^{\epsilon})=\lambda_2(\nu_2, a_2^{\epsilon})$.
By the above arguments,
\begin{equation}
\label{thm2-1-eq2}
\tilde \lambda_2(\nu_2, a_2^{\epsilon})=\lambda_2(\nu_2, a_2^{\epsilon})\geq  \lambda_2(\nu_2, \hat a_2^{\epsilon})=\hat a_2^{\epsilon}.
\end{equation}
We claim that
\[
\lim_{\epsilon \to 0}\tilde\lambda_2(\nu_2, a_2^{\epsilon})=\tilde \lambda_2(\nu_2, a_2).
\]
In fact, $\|a_2^{\epsilon}-a_2\|\leq \epsilon$, that is
\[
a_2(x)-\epsilon\le a_2^{\epsilon}(x)\leq a_2(x)+\epsilon \quad \forall \,\ x\in \bar D.
\]
 Note that $\Phi_2(t; \nu_2, a_2+\epsilon)u_0=e^{\epsilon t}\Phi_2(t;\nu_2, a_2)u_0$, where $\Phi_2(t;\nu_2, a_2)u_0$ is the solution of (3.2) with the initial value $u_0(\cdot)$. Similarly, we have $\Phi_2(t; \nu_2, a_2-\epsilon)u_0=e^{-\epsilon t}\Phi_2(t;\nu_2, a_2)u_0$. So
\[
r(\Phi_2(t;\nu_2, a_2\pm \epsilon))=e^{\pm \epsilon t}r(\Phi_2(t; \nu_2, a_2)).
\]
Hence
\begin{equation}
\label{thm2-1-eq3}
\tilde \lambda_2(\nu_2, a_2\pm \epsilon)=\tilde \lambda_2 (\nu_2, a_2)\pm \epsilon.
\end{equation}
By Proposition 3.1, we have
\[
\Phi_2(t;\nu_2, a_2-\epsilon)u_0\le \Phi_2(t;\nu_2, a_2^{\epsilon})u_0\leq \Phi_2(t;\nu_2, a_2+\epsilon)u_0.
\]
Hence
\[
r(\Phi_2(t;\nu_2, a_2-\epsilon))\le r(\Phi_2(t;\nu_2, a_2^{\epsilon}))\leq r(\Phi_2(t;\nu_2, a_2+\epsilon)).
\]
By\eqref{thm2-1-eq3},
\begin{equation*}
\tilde \lambda_2(\nu_2, a_2-\epsilon)\le \tilde \lambda_2(\nu_2, a_2^{\epsilon})\leq \tilde \lambda_2(\nu_2, a_2+\epsilon).
\end{equation*}
Taking the limit of \eqref{thm2-1-eq2} as $\epsilon \to 0$, we have
\[
\tilde \lambda_2 (\nu_2, a_2)\geq  \hat a_2
\]
So, $\rm{inf}\{\tilde \lambda_2(\nu_2, a_2)| a_2\in X_2, \hat a_2=c_2\}=\lambda_2(\nu_2, c_2)(=c_2)$.

When the principal eigenvalue exists, it is not difficult to prove that the $"="$ holds if and only if $a_2(\cdot)\equiv c_2$. In fact,
suppose that $\lambda_2(\nu_2,a_2)$ exists and $u_2(\cdot)$ is a corresponding positive eigenfunction.
By \eqref{thm2-1-eq1}, $\lambda_2(\nu_2,a_2)=\hat a_2(=c_2)$ iff $u_2(x)=u_2(y)$ for all $x,y\in \bar D$.
Hence $\lambda_2(\nu_2,a_2)=\hat a_2(=c_2)$ iff $u_2(\cdot)\equiv$constant, which implies that $a_2(x)=\lambda_2(\nu_2,a_2)=\hat a_2$.

(5) Suppose that $a_i^1,a_i^2\in X_i$ and $a_i^1\le a_i^2$. By Proposition \ref{comparison-prop}, for any $u_0\in X_i^+$ and $t\ge 0$,
$$
\Phi_i(t;\nu_i,a_i^1)u_0\le \Phi_i(t;\nu_i,a_i^2)u_0.
$$
This implies that
$$
r(\Phi_i(t;\nu_i,a_i^1))\le r(\Phi_i(t;\nu_i,a_i^2)).
$$
By Proposition \ref{belong-prop}, we have
$$
\tilde\lambda_i(\nu_i,a_i^1)\le \tilde\lambda_i(\nu_i,a_i^2).
$$
\end{proof}

\begin{remark}
\label{rk-1}
(1) Theorem 2.1 (3) is not true in the random dispersal case when the space dimension is one.
In fact,  for $1\leq i \leq 3$, we have $\lambda_{R, i}\leq c_i+{c_i}^2L^2$ for any $a_i(\cdot)\in X_i^{++}$, $\hat a_i=c_i$
and $D=(0, L)$. For the periodic boundary case, see Lemma 4.1 in \cite{LLM}. The proof of Neumann or Dirichlet
boundary case is similar to that of the  periodic boundary case.

{\rm
We give a proof for the Neumann boundary case. Let $\psi(x)$ be the eigenvalue function of the operator
$ \Delta +a_2(\cdot)\mathcal I$ defined on $C^2([0, L])$ with Neumann boundary condition. So $\psi(x)>0$ and we have
\begin{equation*}
\begin{cases}
\psi''(x)+a_2(x)\psi(x)=\lambda_{R, 2}\psi(x), \quad & x\in (0, L),  \\
\frac{\partial \psi}{\partial n}(x)=0, & x=0 \text{ or } L.
\end{cases}
\end{equation*}
Multiplying this by $\psi(x)$ and integrating it from $0$ to $L$, we have
\[
-\int_0^L\psi'^2(x)dx+\int_0^La_2(x)\psi^2(x)dx=\lambda_{R, 2}\int_0^L\psi^2(x)dx.
\]
Hence
\[
\lambda_{R, 2}=\frac{-\int_0^L\psi'^2(x)dx+\int_0^L a_2(x)\psi^2(x)dx}{\int_0^L\psi^2(x)dx}.
\]
Take $x_1, x_2\in[0, L)$, we have
\[
\psi^2(x_2)-\psi^2(x_1)=\int_{x_1}^{x_2}2\psi(x)\psi'(x)dx.
\]
Hence, for any positive number $k>0$,
\[
\psi^2(x_2)-\psi^2(x_1)\leq \frac 1k \int_0^L\psi'^2(x)dx+k\int_0^L\psi^2(x)dx.
\]
Multiplying the above inequality by $a_2(x_2)$ and integrating it with respect to $x_1\in [0, L )$ and $x_2\in [0, L)$, we get
\[
L\int_0^La_2(x_2)\psi^2(x_2)dx_2-c_2L\int_0^L\psi^2(x_1)dx_1\leq c_2L^2\left( \frac 1k \int_0^L\psi'^2(x)dx+k\int_0^L\psi^2(x)dx \right).
\]
This is equivalent to
\[
L\int_0^La_2(x)\psi^2(x)dx-c_2L\int_0^L\psi^2(x)dx\leq c_2L^2\left( \frac 1k \int_0^L\psi'^2(x)dx+k\int_0^L\psi^2(x)dx \right).
\]
Letting $k=c_2L$, we obtain
\[
-\int_0^L\psi'^2(x)dx+\int_0^La_2(x)\psi^2(x)dx\leq (c_2+c_2^2L^2)\int_0^L\psi^2(x)dx.
\]
So, we have
\[
\lambda_{R, 2}\leq c_2+c_2^2L^2.
\]

}

(2) Theorem 2.1 (4) may not be true for the Dirichlet type boundary condition. That is, $\tilde \lambda_1(\nu_1, a_1)\geq  \lambda_1(\nu_1, \hat a_1) $  may not be  true,
 where $a_1\in X_1 $.

In the random dispersal case, There is an  example in \cite{ShXi1} which  shows that the principal eigenvalue
$ \lambda_{R, 1}(\nu_1, a_1)$ of (1.4) is smaller than the principal eigenvalue $\lambda_{R, 1}(\nu_1, c_1)$ of (1.4) with $a_1(x)$ being replaced by
$c_1(=\hat a_1)$. It is prove in \cite{HeNgSh2} that
\[
\tilde\lambda_1(\nu_1, a_1, \delta)\to \lambda_{R, 1}(\nu_1, a_1)
\]
as $\delta \to 0$. So, for any $0<\delta\ll 1$, $\tilde\lambda _1(\nu_1, a_1, {\delta})$ is close to $\lambda_{R, 1}(\nu_1, a_1)$, and $\tilde \lambda_1(\nu_1, c_1, {\delta})$ is close to $\lambda_{R, 1}(\nu_1,c_1)$.
Hence $\tilde \lambda_1(\nu_1, a_1, {\delta})$ can be smaller than $\tilde \lambda_1(\nu_1, c_1, \delta)=\lambda_1(\nu_1, c_1, \delta)$ for $\delta\ll 1$.

(3) Theorem 2.1 (4) holds for periodic case (see  \cite{ShZh2}).
When $\lambda_i(\nu_i,a_i)$ does not exist ($i=2,3$), we may have $\tilde\lambda_i(\nu_i,a_i)=\hat a_i$, but $a_i(\cdot)$ is not a constant function.
For example,
let $X_3\!=\!\{u(x)\!\in \!C(\RR^N, \RR)| u(x+{\bf e_j})=u(x)), x\in \RR^N, j=1, 2, \cdots, N\}$,
and $q\in X_3$ with
\begin{equation*}
q(x)=
\begin{cases}
e^{\frac{\|x\|^2}{\|x\|^2-\sigma^2}}\quad &\rm{if }\,\  \|x\|<\sigma,\\
0&\rm{if } \,\ \sigma\leq \| x\| \leq \frac12.
\end{cases}
\end{equation*}
Then $\mathcal K_3+h_3(\cdot)\mathcal I$ with $k(z)=k_\delta(z)$ has no principal eigenvalue for $M>1$, $0<\sigma \ll 1$, $\delta\gg 1$ and $h_3(x)=-1 +Mq(x)$ where $x\in \RR^N$ and $N\ge 3$
(see \cite{ShZh0}). Hence
 $\tilde \lambda_3=\max_{x\in \bar D}h_3(x)=-1+M\max_{x\in \bar D} q(x)=-1+M$. Choosing $M=\frac1{1-\hat q}$, we have $M\hat q=-1+M$, that is $\hat a_3=\tilde \lambda_3$, but
 $a_3(x)=M q(x)$ is not a constant function.
\end{remark}

\section{Effects of Dispersal Rates and the Proof of Theorem \ref{dispersal-rate-effect-thm}}

In this section, we investigate the effects of the dispersal rates
 on the principal spectrum points and the
existence of principal eigenvalues of nonlocal dispersal operators and prove Theorem  \ref{dispersal-rate-effect-thm}.

\begin{proof}[Proof of Theorem \ref{dispersal-rate-effect-thm}]

(1) Assume that $k(\cdot)$ is symmetric. Observe that for any $u(\cdot)\in L^2(D)$,
\begin{align*}
&\int\int_{D\times D}k(y-x)u(x)u(y)dydx-\int_D u^2(x)dx\\
&\leq \int_D \int_D k(y-x)u(y)u(x)dydx-\int_D\int_D k(y-x)dy u^2(x)dx\\
&=\int_D\int_Dk(y-x)(u(y)-u(x))u(x)dydx\\
&=\frac{1}{2}\int\int_{D\times D} k(y-x)(u(y)-u(x))u(x)dydx+\frac{1}{2}\int\int_{D\times D}k(y-x)(u(y)-u(x))u(x)dydx\\
&=\frac{1}{2}\int\int_{D\times D}k(y-x)(u(y)-u(x))u(x)dydx+\frac{1}{2}\int\int_{D\times D}k(y-x)(u(x)-u(y))u(y)dydx\\
&=-\frac{1}{2}\int\int_{D\times D}k(y-x)(u(y)-u(x))^2dydx\\
&\leq 0.
\end{align*}
Then (1) follows from the following facts: $\forall \,\nu_i>0$,
\[
\tilde \lambda_i(\nu_i, a_i)=\sup_{u\in L^2(D), ||u||_{L^2(D)}=1}\left[\nu_i \left(\int_D\int_Dk(y-x)u(y)u(x)dydx-\int_D u^2(x)dx\right)+\int_D a_i(x)u^2(x)dx\right]
\]
in the case $i=1$,
\[
\tilde \lambda_i(\nu_i, a_i)=\sup_{u\in L^2(D), ||u||_{L^2(D)}=1}\left[-\frac{\nu_i}{2}\int\int_{D\times D}k(y-x)(u(y)-u(x))^2dydx+\int_Da_i(x)u^2(x)dx\right]
\]
in the case $i=2$, and
\[
\tilde \lambda_i(\nu_i, a_i)=\sup_{u\in L^2(D), ||u||_{L^2(D)}=1}\left[\nu_i\left(\int_D\int_D \hat k(y-x)u(y)u(x)dydx-\int_D u^2(x)dx\right)+\int_D a_i(x)u^2(x)dx\right]
\]
in the case $i=3$ (see \eqref{K-P-equivalent-eq}).

(2) We prove the case $i=1$. The case $i=3$ can be proved similarly.

Without loss of generality, assume $a_1(x)>0$ for $x\in \bar D$.
 Assume that $\nu_1>0$ is such that $\lambda_1(\nu_1,a_1)$ exists and $\tilde\nu_1>\nu_1$.
 By proposition \ref{iff-prop},  $\lambda_1(\nu_1, a_1)>\max_{x\in \bar D}h_1(x)$,
that is,
\[
\lambda_1(\nu_1, a_1)>\max_{x\in \bar D}(-\nu_1+a_1(x)).
\]

Let $\phi_1(\cdot)$ be a positive principal eigenfunction with $||\phi_1||_{L^2(D)}=1$. Then
\begin{align*}
\lambda_1(\nu_1, a_1)=\nu_1\int\int_{D\times D}k(y-x)\phi_1(y)\phi_1(x)dydx-\nu_1+\int_Da_1(x)
\phi_1^2(x)dx>\max_{x\in\bar D}(-\nu_1+a_1(x)).
\end{align*}
By Proposition \ref{variation-characterization-prop},
\begin{align*}
\tilde\lambda_1(\tilde\nu_1,a_1)&\ge \tilde\nu_1\int\int_{D\times D} k(y-x)\phi_1(y)\phi_1(x)dydx-\tilde\nu_1+\int_D a_1(x)\phi_1^2(x)dx\\
&=\lambda_1(\nu_1, a_1)+(\tilde \nu_1-\nu_1)\int\int_{D\times D}k(y-x)\phi_1(y)\phi_1(x)dydx+\nu_1-\tilde\nu_1\\
&>\max_{x\in\bar D}(-\nu_1+a_1(x))+\nu_1-\tilde \nu_1+(\tilde\nu_1-\nu_1)\int\int_{D\times D}k(y-x)\phi_1(y)\phi_1(x)dydx\\
&> \max_{x\in\bar D}(-\tilde\nu_1+a_1(x)).
\end{align*}
By proposition \ref{iff-prop} again, $\lambda_1(\tilde\nu_1,a_1)$ exists.

(3)
It follows from Theorem 2.1(1) and can also be proved as follows.

 To show $\lambda_i(\nu_i, a_i)$ exists, we only need to show $\tilde \lambda_i(\nu_i, a_i)>\max_{x\in \bar D}h_i(x)$, where $h_i(x)=-\nu_i+a_i(x)$ for $i=1$ and $3$ and $h_i(x)=-\nu_i\int_Dk(y-x)dy+a_i(x)$ for $i=2$.
In the case $i=2$ or $3$,
$\tilde \lambda_i(\nu_i,a_i)\geq \hat a_i$  by theorem \ref{spatial-effect-thm}(4). This implies that
$$
\tilde \lambda_i(\nu_i,a_i)>h_{i,\max}\quad \forall\,\, \nu_i\gg 1.
$$
In the case $i=1$, note that $\lambda_1(1,0)$ exists and
$$
-1<\lambda_1(1,0)<0.
$$
This implies that $\lambda_1(1,\frac{a_1}{\nu_1})$ exists for $\nu_1\gg 1$
and then $\lambda_1(\nu_1,a_1)$ exists for $\nu_1\gg 1$.

(4) On the one hand, we have
$$
\tilde\lambda_i(\nu_i,a_i)\geq h_{i,\max}\geq -\nu_i+a_{i,\max}.
$$

On the other hand, for any $\lambda>a_{i,\max}$, $\lambda \mathcal{I}-a_i(\cdot)\mathcal {I}$ has bounded inverse.
This implies that
$$
a_{i, \max}+\epsilon>\tilde\lambda_i(\nu_i,a_i)\quad \forall\,\, 0<\nu_i\ll 1.
$$
Therefore,
$$
\lim_{\nu_i\to 0}\tilde\lambda_i(\nu_i,a_i)=a_{i,\max}.
$$

(5) We prove the cases $i=1$ and $i=2$. The case $i=3$ can be proved by the similar arguments as in the case $i=2$.

 First of all, we prove the case $i=1$. By Proposition \ref{most-basic-prop},
$$
\tilde\lambda_1(1,0)<0.
$$
Observe that
$$
\tilde\lambda_1(\nu_1,a_1)=\nu_1\tilde\lambda_1\left(1,\frac{a_1}{\nu_1}\right)
\quad {\rm and}\quad
\tilde\lambda_1\left(1,\frac{a_1}{\nu_1}\right)\to \tilde\lambda_1(1,0)
$$
as $\nu_1\to \infty$. It then follows that
$$
\tilde\lambda_1(\nu_1,a_1)\le\frac{\nu_1}{2}\tilde\lambda_1(1,0)\quad \forall \,\,\nu_1\gg 1.
$$
This implies that
$$
\lim_{\nu_1\to\infty}\tilde\lambda_1(\nu_1,a_1)=-\infty.
$$

Second of all, we prove the case $i=2$.
By (3),  $\lambda_2(\nu_2,a_2)$ exists for $\nu_2\gg 1$.
In the following, we assume $\nu_2\gg 1$ such that $\lambda_2(\nu_2,a_2)$ exists. Let $\phi_{2,\nu_2}(x)$ be a positive principal
eigenfunction with $\int_D \phi^2_{2, \nu_2}(x)dx=1$.

Note that
$$
\hat a_2\le \lambda_2(\nu_2,a_2)\le a_{2,\max},
$$
and
$$
\nu_2\int_D\int_D k(y-x)(\phi_{2,\nu_2}(y)-\phi_{2,\nu_2}(x))\phi_{2,\nu_2}(x)dydx+\int_D a_2(x)\phi_{2,\nu_2}^2(x)dx=\lambda_2(\nu_2,a_2).
$$
This implies that
$$
\frac{\nu_2}{2}\int_D\int_D k(y-x)(\phi_{2,\nu_2}(y)-\phi_{2,\nu_2}(x))^2 dydx=\int_D a_2(x)\phi_{2,\nu_2}^2(x)dx-\lambda_2(\nu_2,a_2)\le a_{2,\max}-\hat a_2,
$$
and then
\begin{equation}
\label{nu-2-eq1}
\int_D\int_D k(y-x)(\phi_{2,\nu_2}(y)-\phi_{2,\nu_2}(x))^2 dydx\leq \frac{2(a_{2,\max}-\hat a_2)}{\nu_2}.
\end{equation}

Let $\psi_{2,\nu_2}(x)=\phi_{2,\nu_2}(x)-\hat \phi_{2,\nu_2}$.
Then
\begin{equation*}
\nu_2\int_D\int_D k(y-x)(\phi_{2,\nu_2}(y)-\phi_{2,\nu_2}(x))dydx+\int_D a_2(x)\phi_{2,\nu_2}(x) dx=\int_D a_2(x)(\psi_{2,\nu_2}(x)+\hat \phi_{2,\nu_2})dx,
\end{equation*}
and hence
\begin{equation*}
\lambda_2(\nu_2,a_2)\int_D \phi_{2,\nu_2}(x)dx=\hat \phi_{2,\nu_2}\int_D a_2(x)dx+\int_D a_2(x) \psi_{2,\nu_2}(x)dx.
\end{equation*}
This implies that
\begin{equation}
\label{nu-2-eq2}
\lambda_2(\nu_2,a_2)\hat \phi_{2,\nu_2}=\hat a_2\hat \phi_{2,\nu_2}+\frac1{|D|}\int_D a_2(x)\psi_{2,\nu_2}(x)dx.
\end{equation}

To show $\lambda_2(\nu_2,a_2)\to \hat a_2$ as $\nu_2\to\infty$, we first show that
$ \int_D a_2(x)\psi_{2,\nu_2}(x)dx\to 0$ as $\nu_2\to\infty$.

Note that $\tilde\lambda_2(1,0)=0$ and $\tilde\lambda_2(1,0)$ is the principal eigenvalue of
$\mathcal{K}_2+b_0(\cdot)\mathcal{I}$ with $\phi(\cdot)\equiv 1$ being a principal eigenfunction, where
$$
b_0(x)=-\int_D k(y-x)dy.
$$
Moreover, $\tilde\lambda_2(1,0)$ is also an isolated algebraically simple   eigenvalue of $\mathcal{K}_2+b_0(\cdot)\mathcal{I}$ on $L^2(D)$.

Note also that
\begin{equation}
\label{inequ}
\int_D\Big((-\mathcal{K}_2-b_0\mathcal {I})u\Big)(x)u(x)dx=\frac{1}{2}\int_D\int_D k(y-x)(u(y)-u(x))^2 dydx\geq 0
\end{equation}
for any $u(\cdot)\in L^2(D)$ and $-\mathcal{K}_2-b_0(\cdot)\mathcal{I}$ is a self-adjoint operator on $L^2(D)$.
Then there is a bounded linear operator $A:L^2(D)\to L^2(D)$ such that
\begin{equation}
\label{nu-2-eq3}
\int_D\Big((-\mathcal{K}_2-b_0\mathcal {I})u\Big)(x)u(x)dx=\int_D(Au)(x)(Au)(x)dx\quad \forall\,\, u\in L^2(D).
\end{equation}
Let
$$
E_1={\rm span}\{\phi(\cdot)\},
$$
and
$$
E_2=\{u(\cdot)\in L^2(D)\,|\, \int_D u(x)dx=0\}.
$$
Then
$$
L^2(D)=E_1\oplus E_2
$$
and
$$
\Big(\mathcal{K}_2+b_0(\cdot)\mathcal{I}\Big)(E_2)\subset E_2.
$$
Moreover, $(\mathcal{K}_2+b_0(\cdot)\mathcal{I})|_{E_2}$ is invertible. We claim that
there is $C>0$ such that
\begin{equation}
\label{nu-2-eq4}
\int_D (Au)(x)(Au)(x)dx\geq C\int_D u^2(x)dx\quad \forall\,\, u\in E_2.
\end{equation}
For otherwise, there is $u_n\in E_2$ with $\int_D u_n^2(x)dx=1$ such that
$$
\int_D (Au_n)(x)(Au_n)(x)dx\to 0
$$
as $n\to\infty$. It then follows that $0\in\sigma((\mathcal{K}_2+b_0(\cdot)\mathcal{I})|_{{E_2}})$, a contradiction.
Hence \eqref{nu-2-eq4} holds.

By \eqref{inequ}, \eqref{nu-2-eq3} and \eqref{nu-2-eq4},
for any $\nu_2\gg 1$,
\begin{equation}
\label{nu-2-eq5}
\int_D \psi_{2,\nu_2}^2(x)dx\le \frac{1}{2C} \int_D\int_D k(y-x)(\psi_{2,\nu_2}(y)-\psi_{2,\nu_2}(x))^2dydx.
\end{equation}
Observe that
$$
\int_D\int_D k(y-x)(\phi_{2,\nu_2}(y)-\phi_{2,\nu_2}(x))^2dydx=\int_D\int_D k(y-x)(\psi_{2,\nu_2}(y)-\psi_{2,\nu_2}(x))^2dydx.
$$
This together with \eqref{nu-2-eq1} and \eqref{nu-2-eq5} implies that
$$
\int_D \psi_{2,\nu_2}^2(x) dx\to 0\quad {\rm as}\quad \nu_2\to\infty,
$$
and then
 $$
 \int_D a_2(x)\psi_{2,\nu_2}(x)dx\to 0\quad {\rm as}\quad \nu_2\to\infty.
 $$

Second, assume $\lambda_2(\nu_2,a_2)\not \to\hat a_2$ as $\nu_2\to\infty$. By \eqref{nu-2-eq2},  we must have
 $\hat\phi_{2,\nu_{2,n}}\to 0$ for some sequence $\nu_{2,n}\to \infty$.  This and \eqref{nu-2-eq1} implies that
 \begin{align*}
 \int_D\phi_{2,\nu_{2,n}}^2(x)dx&\leq C_0\int_D\int_D k(y-x)\phi^2_{2,\nu_{2,n}}(x)dydx\\
 &=C_0\int_D \int_D k(y-x)(\phi_{2,\nu_{2,n}}^2(x)-\phi_{2,\nu_{2,n}}(x)\phi_{2,\nu_{2,n}}(y))dydx\\
 &\quad +C_0\int_D\int_D k(y-x)\phi_{2,\nu_{2,n}}(y)\phi_{2,\nu_{2,n}}(x)dydx\\
 &\leq \frac{C_0}{2}\int_D\int_D k(y-x)(\phi_{2,\nu_{2,n}}(y)-\phi_{2,\nu_{2,n}}(x))^2dydx+|D|^2C_0M\hat\phi_{2,\nu_{2,n}}\hat\phi_{2,\nu_{2,n}}\\
 &\leq \frac{C_0(a_{2, \max}-\hat a_2)}{\nu_2}+|D|^2C_0M\hat\phi_{2,\nu_{2,n}}\hat\phi_{2,\nu_{2,n}}
 \end{align*}
where $C_0=(\min_{x\in\bar D}\int_D k(y-x)dy)^{-1}$ and $M=\sup_{x,y\in\bar D}k(y-x)$.
That is
\[
\int_D\phi_{2,\nu_{2,n}}^2(x)dx\to 0 \quad \text{ as } \nu_{2, n}\to\infty.
\]
This is a contradiction. Therefore
$$
\lambda_2(\nu_2,a_2)\to\hat a_2
$$
as $\nu_2\to\infty$.
\end{proof}

\section{Effects of Dispersal Distance and the Proof of Theorem \ref{dispersal-distance-effect-thm}}

In this section, we investigate the effects of the dispersal distance
 on the principal spectrum points and the
existence of principal eigenvalues and prove Theorem  \ref{dispersal-distance-effect-thm}.

\begin{proof}[Proof of Theorem \ref{dispersal-distance-effect-thm}]

(1) As mentioned in Remark \ref{dispersal-distance-effect-rk}, the cases $i=1$  and $3$ are proved in \cite[Theorem 2.6]{KaLoSh1}.
The case $i=2$  can be proved by the similar arguments as in  \cite[Theorem 2.6]{KaLoSh1}.  For completeness, we
provide a proof for the case $i=2$ in the following.

By Proposition \ref{variation-characterization-prop},
$$
\tilde \lambda_i(\nu_i,a_i,\delta)=\sup_{u\in L^2(D),\|u\|_{L^2(D)}=1}\int_D\left[\nu_i\int_D k_\delta(y-x) (u(y)-u(x))dy+a_i(x)u(x)\right] u(x) dx.
$$
On the one hand,
\begin{equation*}
\tilde \lambda_i(\nu_i,a_i,\delta)
\!=\!\sup_{u\in L^2(D),\|u\|_{L^2(D)}=1}\left[-\frac{\nu_i}2\int_D\int_Dk_{\delta}(y-x)(u(y)-u(x))^2dydx+\int_D\int_Da_i(x)u^2(x)dx \right]
\leq a_{i, \max}.
\end{equation*}
On the other hand, assume that $x_0\in\bar D$ is such that $a_i(x_0)=a_{i,\max}$. Then for
any $0<\epsilon<1$, there are $\sigma_0^*>0$ and  $x_0^*\in {\rm
Int} D$ such that $B(x_0^*, \sigma_0^*)\subset \bar D$ and
$$ a_i(x_0)-a_i(x)<\epsilon/2\quad {\rm for}\quad x\in
B(x_0^*, \sigma_0^*).
$$
Let $u_0(\cdot)$ be a smooth
function with ${\rm supp}(u_0(\cdot))\cap D\subset
B(x_0^*, \sigma_0^*)$ and $\|u_0\|_{L^2(D)}=1$.
Then
\begin{align*}
\tilde\lambda_i(\nu_i,a_i,\delta)&\geq
\int_D \left(\nu_i\int_Dk_\delta(y-x)(u_0(y)-u_0(x))dy+a_i(x)u_0(x)\right)u_0(x)dx\\
&\geq
\nu_i\int_D \left(\int_Dk_\delta(y-x)(u_0(y)-u_0(x))dy\right)u_0(x)dx+\left(a_{i,\max}-\frac{\epsilon}{2}\right).
\end{align*}
Note that
$$
\int_D k_\delta(y-x)(u_0(y)-u_0(x))dy \to 0\quad
 \forall \, x\in {\rm Int}(D)
$$
as $\delta\to 0$. And
$$
\left|\int_Dk_\delta(y-x)(u_0(y)-u_0(x))dy\right|\leq 2 \max_{y\in \bar D}|u_0(y)|\quad
\forall\, x\in D.
$$
Hence, there exists $\delta_0>0$, such that for any $\delta<\delta_0$, we have
$$
\left|\nu_i\int_D\left(\int_Dk_\delta(y-x)(u_0(y)-u_0(x))dy\right)u_0(x)dx\right|\leq \frac{\epsilon}{2}
$$
It then follows that
$$
a_{i,\max}\geq \tilde \lambda_i(\nu_i,a_i,\delta)\geq a_{i,\max}-{\epsilon}
$$
 This implies that $\tilde\lambda_i(\nu_i,a_i,\delta)\to a_{i,\max}$ as
$\delta\to 0$.

(2) First, for $i=1$,
$$
\left|\int_D k_\delta(y-x)u(y)dy\right|\leq \|u\|\int_D k_\delta(y-x)dy \to 0
$$
as $\delta\to \infty$ uniformly in $u\in X_1$ with $\|u\|\le 1$. Therefore,
$$\tilde\lambda_1(\nu_1,a_1,\delta)\to \sup\{{\rm Re}\lambda|\lambda\in \sigma((-\nu_1+a_1(\cdot))\mathcal{I})\}=-\nu_1+a_{1,\max}
$$
as $\delta\to\infty$.

For $i=2$,
$$
\left|\int_D k_\delta(y-x)(u(y)-u(x))dy\right|\leq 2\|u\|\int_D k_\delta(y-x)dy \to 0
$$
as $\delta\to\infty$ uniformly in $u\in X_2$ with $\|u\|\le 1$. Hence
$$
\tilde\lambda_2(\nu_2,a_2,\delta)\to \sup\{{\rm Re}\lambda|\lambda\in \sigma(a_2(\cdot)\mathcal{I})\} =a_{2,\max}
$$
as $\delta\to\infty$.

For $i=3$, recall that
$$
\bar\lambda_3(\nu_3,a_3)=\sup\{{\rm Re}\lambda\,|\, \lambda \in\sigma(\nu_2\mathcal{\bar I}+h_3(\cdot)\mathcal{I})\},
$$
where
$$
\mathcal{\bar I}u=\frac{1}{p_1p_2\cdots p_N}\int_0^{p_1}\int_0^{p_2}\cdots\int_0^{p_N}u(x)dx.
$$
We first assume that $a_3(\cdot)$ satisfies the conditions in Remark \ref{spatial-effect-rk} (2). Then by similar arguments as in Theorem
\ref{spatial-effect-thm} (2),
 $\bar\lambda_3(\nu_3,a_3)$ is the principal eigenvalue of $\nu_3\mathcal{\bar I}+h_3(\cdot)\mathcal{I}$. Let
 $\phi_3(\cdot)$ be the positive principal eigenfunction of  $\nu_3\mathcal{\bar I}+h_3(\cdot)\mathcal{I}$ with $\hat \phi_3=\frac{1}{|D|}\int_D\phi_3(x)dx=1$. We then have
$\bar\lambda_3(\nu_3,a_3)>h_{3,\max}$ and
\begin{equation}
\label{del-3-eq0}
\frac{1}{|D|}\int_D \frac{\nu_3\psi_3(x)}{\bar \lambda_3(\nu_3,a_3)+\nu_3-a_3(x)}dx=1,
\end{equation}
where
$$
\psi_3(x)=(\bar\lambda_3(\nu_3,a_3)+\nu_3-a_3(x))\phi_3(x).
$$
Fix $0<\epsilon<\bar\lambda_3(\nu_3,a_3)-h_{i,\max}$. Then
\begin{equation}
\label{del-3-eq1}
\frac{1}{|D|}\int_D \frac{\nu_3\psi_3(x)}{\bar \lambda_3(\nu_3,a_3)-\epsilon+\nu_3-a_3(x)}dx>1.
\end{equation}

Observe that for any ${\bf k}=(k_1,k_2,\cdots,k_N)\in \ZZ^N\setminus\{0\}$,
$$
\int_{\RR^N} \tilde k(z)\cos\Big(\sum_{i=1}^N k_i p_i x_i+\delta \sum_{i=1}^N k_i p_i z_i\Big)dz\to 0,
$$
and
$$
\int_{\RR^N} \tilde k(z)\sin\Big(\sum_{i=1}^N k_i p_i x_i+\delta \sum_{i=1}^N k_i p_i z_i\Big)dz\to 0
$$
as $\delta \to \infty$. This implies that for any $a\in X_3$,
$$
\int_{\RR^N}\tilde k(z)a(x+\delta z)dz\to \hat a
$$
as $\delta\to\infty$ and then
\begin{align*}
\int_{\RR^N}\frac{\nu_3k_\delta(y-x)\psi_3(y)}{\bar\lambda_3(\nu_3,a_3)-\epsilon+\nu_3-a_3(y)}dy&=\int_{\RR^N}\frac{\nu_3\tilde k(z)\psi_3(x+\delta z)}{\bar\lambda_3(\nu_3,a_3)-\epsilon+\nu_3-a_3(x+\delta z)}dz\\
&\to \frac{1}{|D|}\int_D \frac{\nu_3\psi_3(x)}{\bar \lambda_3(\nu_3,a_3)-\epsilon+\nu_3-a_3(x)}dx
\end{align*}
as $\delta\to\infty$ uniformly in $x\in\RR^N$. This together with \eqref{del-3-eq1} implies that
$$
\int_{\RR^N}\frac{\nu_3k_\delta(y-x)\psi_3(y)}{\bar\lambda_3(\nu_3,a_3)-\epsilon+\nu_3-a_3(y)}dy>1\quad \forall\,\, x\in\RR^N,\,\,\, \delta\gg 1.
$$
It then follows that
\begin{equation}
\label{del-3-eq2}
\tilde\lambda_3(\nu_3,a_3,\delta)>\bar\lambda_3(\nu_3,a_3)-\epsilon>h_{i, \max}\quad \forall\,\, \delta \gg 1
\end{equation}
and $\lambda_3(\nu_3,a_3,\delta)$ exists for $\delta\gg 1$.

Now for any $\epsilon>0$,
by \eqref{del-3-eq0},
\begin{equation}
\label{del-3-eq3}
\frac{1}{|D|}\int_D \frac{\nu_3\psi_3(x)}{\bar \lambda_3(\nu_3,a_3)+\epsilon+\nu_3-a_3(x)}dx< 1.
\end{equation}
Then by the similar arguments in the above,
\begin{equation}
\label{del-3-eq4}
\tilde\lambda_3(\nu_3,a_3,\delta)<\bar\lambda_3(\nu_3,a_3)+\epsilon\quad \forall \,\,\delta\gg 1.
\end{equation}
By \eqref{del-3-eq2} and \eqref{del-3-eq4},
$$
\tilde\lambda_3(\nu_3,a_3,\delta)\to \bar\lambda_3(\nu_3,a_3)\quad {\rm as}\quad \delta\to\infty.
$$

Now for general $a_3\in X_3$, and for any $\epsilon>0$, there is $a_{3,\epsilon}\in X_3$ such that
$$
\|a_3-a_{3,\epsilon}\|<\epsilon\quad \forall \, x\in\RR^N,
$$
and $a_{3,\epsilon}(\cdot)$ satisfies the conditions in Remark \ref{spatial-effect-rk} (2).  By Theorem \ref{spatial-effect-thm} (5),
$$
\tilde\lambda_3(\nu_3,a_{3,\epsilon},\delta)-\epsilon\le\tilde\lambda_3(\nu_3,a_3,\delta)\le\tilde\lambda_3(\nu_3,a_{3,\epsilon},\delta)+\epsilon.
$$
By the above arguments,
$$
\bar\lambda_3(\nu_3,a_3)-3\epsilon\le \bar\lambda_3(\nu_3,a_{3,\epsilon})-2\epsilon\le \tilde\lambda_3(\nu_3,a_3,\delta)\le\bar \lambda_3(\nu_3,a_{3,\epsilon})+2\epsilon\le\bar\lambda_3(\nu_3,a_3)+3\epsilon
\quad \forall\,\,\delta\gg 1.
$$
We hence also have
$$
\tilde\lambda_3(\nu_3,a_3,\delta)\to \bar\lambda_3(\nu_3,a_3)\quad {\rm as}\quad \delta\to\infty.
$$

(3) By (1),  for any $\epsilon>0$,
$$
\tilde\lambda_i(\nu_i,a_i,\delta)> a_{i,\max}-\epsilon\quad \forall\,\, 0<\delta\ll 1.
$$
This implies that there is $\delta_0>0$ such that
$$
\tilde\lambda_i(\nu_i,a_i,\delta)>h_{i,\max}\quad \forall   \,\ 0<\delta<\delta_0.
$$
Then by Proposition \ref{variation-characterization-prop},  $\lambda_i(\nu_i,a_i)$ exists for $0<\delta<\delta_0$.
\end{proof}

\section{Asymptotic Dynamics of Two Species Competition System}

In this section, we consider the asymptotic dynamics of the two species competition system \eqref{competition-system-eq} and
prove Theorem \ref{competition-system-thm}
by applying some of the principal spectrum properties developed in previous sections.
Throughout this section, we assume that $k(-z)=k(z)$,  $\tilde \lambda_1(\nu,f(\cdot,0))>0$, $f(x,w)<0$ for $w\gg 1$, and $\p_2 f(x,w)<0$ for $w\ge 0$.

We first present two lemmas.

\begin{lemma}
\label{dirichlet-neumann-bc-lm}
For any given $\nu>0$ and $a\in X_1 (=X_2)$,
$$
\tilde\lambda_1(\nu,a)\le \tilde\lambda_2(\nu,a)
$$
and if $\lambda_1(\nu,a)$ exists, then
$$
\tilde\lambda_1(\nu,a)(=\lambda_1(\nu,a))< \tilde\lambda_2(\nu,a)
$$

\end{lemma}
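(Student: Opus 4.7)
The plan is to use the variational characterization of Proposition 3.6 (which applies since we are assuming $k(-z)=k(z)$ throughout Section 7). The key observation is that with $\mathcal{K}_1=\mathcal{K}_2$ and the same $a$, the only difference between the two Rayleigh quotients lies in the zeroth-order terms: for $i=1$ we have $h_1(x)=-\nu+a(x)$, while for $i=2$ we have $h_2(x)=-\nu\int_D k(y-x)\,dy+a(x)$. Since $k\ge 0$ and $\int_{\mathbb{R}^N}k(z)\,dz=1$, we get $\int_D k(y-x)\,dy\le 1$, hence $h_2(x)\ge h_1(x)$ pointwise on $\bar D$.

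For any $u\in L^2(D)$ with $\|u\|_{L^2(D)}=1$, subtracting the two Rayleigh quotients gives
\begin{equation*}
\int_D[\nu(\mathcal{K}_2 u)u + h_2 u^2]\,dx - \int_D[\nu(\mathcal{K}_1 u)u + h_1 u^2]\,dx
= \int_D \nu\left(1-\int_D k(y-x)\,dy\right) u^2(x)\,dx \;\ge\; 0.
\end{equation*}
Taking the supremum over the unit sphere in $L^2(D)$ on each side and invoking Proposition 3.6 yields $\tilde\lambda_1(\nu,a)\le\tilde\lambda_2(\nu,a)$. (Alternatively, one can argue by the comparison principle of Proposition 3.1: if $u(t,x)$ solves the Neumann evolution with non-negative initial data, then since $-\nu\int_D k(y-x)dy\cdot u \ge -\nu u$, it is a super-solution of the Dirichlet equation, so the Neumann semigroup dominates the Dirichlet semigroup and thus has larger spectral radius.)

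For the strict inequality when $\lambda_1(\nu,a)$ exists, let $\phi_1\in X_1^{++}$ be the positive principal eigenfunction normalized by $\|\phi_1\|_{L^2(D)}=1$. Then $\phi_1$ realizes the Dirichlet supremum, so
\begin{equation*}
\tilde\lambda_2(\nu,a)\;\ge\;\int_D[\nu(\mathcal{K}_2\phi_1)\phi_1+h_2\phi_1^2]\,dx
=\tilde\lambda_1(\nu,a)+\int_D \nu\left(1-\int_D k(y-x)\,dy\right)\phi_1^2(x)\,dx.
\end{equation*}
Thus it suffices to check that the correction integral is strictly positive. Since $k\in C^1$ with $k(0)>0$, there exists $\rho>0$ such that $k>0$ on $B(0,\rho)$; then for any $x\in\bar D$ within distance $\rho$ of $\partial D$, a portion of $B(x,\rho)$ lies in $\mathbb{R}^N\setminus D$, forcing $\int_D k(y-x)\,dy<1$. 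This defines a nonempty open subset of $\bar D$, hence a set of positive Lebesgue measure, on which $1-\int_D k(y-x)\,dy>0$; combined with $\phi_1(x)>0$ everywhere on $\bar D$, the integral is strictly positive and $\tilde\lambda_1(\nu,a)<\tilde\lambda_2(\nu,a)$.

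The main (mild) obstacle is ensuring the strict inequality $\int_D k(y-x)dy<1$ holds on a set of positive measure; this is where one must use the geometry of $D$ as a bounded domain together with $k(0)>0$ and the compact support of $k$. Everything else reduces to a direct comparison of two Rayleigh quotients via Proposition 3.6.
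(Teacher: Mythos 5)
Your proof is correct, and it takes a genuinely different route from the paper's. The paper proves the strict inequality first (when $\lambda_1(\nu,a)$ exists) by a dynamical comparison argument: applying the comparison principle (Proposition \ref{comparison-prop}) to the Dirichlet principal eigenfunction $\phi$, since $h_2(x)\ge h_1(x)$ pointwise the Neumann semigroup dominates, $\Phi_2(t;\nu,a)\phi\gg\Phi_1(t;\nu,a)\phi=e^{\lambda_1 t}\phi$, forcing $\tilde\lambda_2>\lambda_1$; the weak inequality for general $a$ is then obtained by approximating $a$ via Lemma \ref{technical-lm} and Theorem \ref{spatial-effect-thm}(2) so that the Dirichlet principal eigenvalue exists for $a_\epsilon$, applying the strict inequality to $a_\epsilon$, and letting $\epsilon\to 0$. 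You instead run the argument through Proposition \ref{variation-characterization-prop}: the Rayleigh functionals differ only by the zero-order term $\nu\bigl(1-\int_D k(y-x)\,dy\bigr)u^2\ge 0$, so the weak inequality follows directly for every $a$ without any approximation step, and the strict case is obtained by plugging the (existing) Dirichlet principal eigenfunction into the Neumann functional and observing the correction integral is strictly positive because $1-\int_D k(y-x)\,dy>0$ on an open neighborhood of $\partial D$ (using $k(0)>0$ and $k\in C^1$). Both approaches hinge on exactly the same pointwise comparison $h_2\ge h_1$; yours is shorter because the variational formula absorbs the approximation argument, while the paper's comparison-semigroup argument does not require the self-adjoint structure (though symmetry of $k$ is assumed in Section 7 anyway, so this buys nothing here). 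One small remark: when you assert that $\phi_1$ realizes the Dirichlet supremum you are implicitly using that $\lambda_1(\nu,a)>h_{1,\max}$ (Proposition \ref{iff-prop}), so that $\phi_1$ is continuous and strictly positive and the eigenvalue identity can be tested in $L^2$; it is worth making that dependence explicit, but it does not affect correctness.
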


\begin{proof}
First, assume that $\lambda_1(\nu,a)$ exists. Let $\phi(\cdot)$ be the positive principal
eigenfunction of $\nu\mathcal{K}_1-\nu\mathcal{I}+a(\cdot)\mathcal{I}$ with $\|\phi\|=1$.
Then
$$
\Phi_1(t;\nu,a)\phi= e^{\lambda_1(\nu,a)t}\phi, \quad {\rm and}\quad \Phi_2(t;\nu,a)\phi= e^{\tilde \lambda_2(\nu,a)t}\phi\quad \forall\,\, t>0.
$$
By Proposition \ref{comparison-prop},
$$
\Phi_2(t;\nu,a)\phi\gg \Phi_1(t;\nu,a)\phi\quad \forall\,\, t>0.
$$
This implies that
$$
\tilde \lambda_2(\nu,a)>\lambda_1(\nu,a).
$$

In general, by Lemma \ref{technical-lm} and Theorem \ref{spatial-effect-thm} (2), for any $\epsilon>0$,  there is $a_\epsilon\in X_1$ such that
$\lambda_1(\nu,a_\epsilon)$ exists and
$$
a_\epsilon(x)-\epsilon\le a(x)\le a_\epsilon(x)+\epsilon.
$$
By the above arguments,
$$
\tilde\lambda_2(\nu,a_\epsilon)>\lambda_1(\nu,a_\epsilon).
$$
Observe that
$$
\tilde\lambda_2(\nu,a)\ge \tilde\lambda_2(\nu,a_\epsilon)-\epsilon\quad {\rm and}\quad \lambda_1(\nu,a_\epsilon)\geq\tilde\lambda_1(\nu,a)-\epsilon.
$$
Hence
$$
\tilde\lambda_2(\nu,a)\ge \tilde\lambda_1(\nu,a)-2\epsilon.
$$
Letting $\epsilon\to 0$, we have
$$
\tilde\lambda_2(\nu,a)\ge\tilde\lambda_1(\nu,a).
$$
\end{proof}

Consider
\begin{equation}
\label{single-eq1}
u_t=\nu\left[\int_D k(y-x)u(t,y)dy-u(t,x)\right]+u(t,x)g(x,u(t,x)),\quad x\in\bar D
\end{equation}
and
\begin{equation}
\label{single-eq2}
v_t=\nu \int_D k(y-x)[v(t,y)-v(t,x)]dy+v(t,x)g(x,v(t,x)),\quad x\in\bar D,
\end{equation}
where $g$ is a $C^1$ function, $g(x,w)<0$ for $w\gg 1$, and $\p_2 g(x,w)<0$ for $w\ge 0$.

\begin{lemma}
\label{single-eq-lm}
\begin{itemize}
\item[(1)] If $\lambda_1(\nu, g(\cdot,0))>0$, then there is $u^*\in X_1^{++}$ such that $u=u^*$ is a stationary solution of
\eqref{single-eq1} and for any solution $u(t,x)$ of \eqref{single-eq1} with $u(0,\cdot)\in X_1^{+}\setminus\{0\}$,
$u(t,\cdot)\to u^*(\cdot)$ in $X_1$.

\item[(2)]  If $\lambda_2(\nu, g(\cdot,0))>0$, then there is $v^*\in X_2^{++}$ such that $v=v^*$ is a stationary solution of
\eqref{single-eq2} and for any solution $v(t,x)$ of \eqref{single-eq2} with $v(0,\cdot)\in X_2^{+}\setminus\{0\}$,
$v(t,\cdot)\to v^*(\cdot)$ in $X_2$.
\end{itemize}
\end{lemma}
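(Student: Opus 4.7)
The plan is to prove (1) in detail; part (2) follows by the same argument applied to \eqref{single-eq2}, noting that in the Neumann case the kernel term $\int_D k(y-x)[v(y)-v(x)]dy$ annihilates constants so the super-solution construction is even simpler. The strategy is the classical monotone-dynamics sandwich: build a time-monotone sub- and super-solution converging to a common positive steady state $u^*$, then use Proposition~\ref{comparison-prop}(2) and comparison to handle arbitrary positive initial data.

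For the super-solution, since $g(x,w)<0$ for $w\gg 1$, pick $M>0$ with $g(x,M)<0$ on $\bar D$; because $\int_D k(y-x)\,dy\le 1$, the constant $M$ satisfies $\nu[\mathcal K_1 M-M]+Mg(\cdot,M)\le 0$ and is a super-solution. For the sub-solution, let $\phi\in X_1^{++}$ be the positive principal eigenfunction of $\nu\mathcal K_1-\nu\mathcal I+g(\cdot,0)\mathcal I$, so that
\begin{align*}
\nu[\mathcal K_1(\epsilon\phi)-\epsilon\phi]+\epsilon\phi\,g(\cdot,\epsilon\phi)
=\epsilon\phi\bigl[\lambda_1(\nu,g(\cdot,0))+g(\cdot,\epsilon\phi)-g(\cdot,0)\bigr]\ge 0
\end{align*}
for $\epsilon>0$ small enough, by continuity of $g$ in its second argument and the strict positivity of the principal eigenvalue. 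Shrinking $\epsilon$ further gives $\epsilon\phi\ll M$.

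Proposition~\ref{comparison-prop} then makes $t\mapsto u(t,\cdot;M)$ nonincreasing and $t\mapsto u(t,\cdot;\epsilon\phi)$ nondecreasing in $X_1$ (compare the solution starting at time $s>0$ with the one starting at $0$), both confined to $[\epsilon\phi,M]$; compactness of $\mathcal K_1$ yields limits $\bar u\ge\underline u$ that are positive stationary solutions of \eqref{single-eq1} lying in $X_1^{++}$ (strict positivity from Proposition~\ref{comparison-prop}(2)). To identify $\bar u=\underline u=:u^*$, I use a sweeping argument: set $\tau^*=\sup\{\tau>0:\tau\underline u\le\bar u\}$, assume $\tau^*<1$, and pick a contact point $x_0$ with $\tau^*\underline u(x_0)=\bar u(x_0)$. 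Subtracting $\tau^*$ times the stationary equation for $\underline u$ from that for $\bar u$ and evaluating at $x_0$ gives
\[
\nu\bigl[\mathcal K_1\bar u-\mathcal K_1(\tau^*\underline u)\bigr](x_0)+\tau^*\underline u(x_0)\bigl[g(x_0,\bar u(x_0))-g(x_0,\underline u(x_0))\bigr]=0,
\]
in which the kernel term is $\ge 0$ and the second term is strictly positive (since $\bar u(x_0)=\tau^*\underline u(x_0)<\underline u(x_0)$ and $\p_2 g<0$), a contradiction. Hence $\tau^*\ge 1$, and the symmetric argument gives $\underline u\ge\bar u$. For arbitrary $u_0\in X_1^+\setminus\{0\}$, Proposition~\ref{comparison-prop}(2) places $u(1,\cdot;u_0)$ in $X_1^{++}$; choosing $\epsilon,M$ to bracket it, comparison and the preceding step yield $u(t,\cdot;u_0)\to u^*$ in $X_1$.

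The main obstacle is the uniqueness step: unlike the local case, the nonlocal integral $\nu[\mathcal K_1\bar u-\mathcal K_1(\tau^*\underline u)](x_0)$ is only automatically nonnegative, and the argument above still works since the second term contributes the required strict positivity. The more delicate variant arises if $\bar u\equiv\tau^*\underline u$ on all of $\bar D$; then the stationary equation forces $g(\cdot,\tau^*\underline u)\equiv g(\cdot,\underline u)$, which contradicts $\tau^*<1$ together with $\p_2 g<0$. Handling the Dirichlet-type case (1) near $\p D$, where $\int_D k(y-x)dy$ can be small, relies on the strict interior positivity of $\phi$ and compactness of $\bar D$ to get a uniform positive lower bound on $\underline u$; this is the main technical point one must control carefully.
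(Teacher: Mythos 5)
Your write-up is a self-contained proof, whereas the paper dispatches this lemma with the single citation ``It follows from \cite[Theorem E]{RaSh}.'' So you have taken a genuinely different and more explicit route: you build the monotone sandwich directly, with a large constant $M$ as super-solution and $\epsilon\phi$ (with $\phi$ the principal eigenfunction of $\nu\mathcal K_1-\nu\mathcal I+g(\cdot,0)\mathcal I$) as sub-solution, whereas the paper outsources the entire statement to the time-periodic principal-eigenvalue framework of \cite{RaSh}. Your construction is standard and would be accepted as a direct proof; the paper's version trades transparency for brevity and reuse of a result established once for a broader class of nonautonomous operators.

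Two places need tightening. First, the sweeping step is set up backwards: with $\tau^*=\sup\{\tau>0:\tau\underline u\le\bar u\}$ you automatically have $\tau^*\ge1$ because $\underline u\le\bar u$ by construction, so ``assume $\tau^*<1$'' is vacuous and that half of the argument proves nothing new. The content is entirely in what you label the symmetric argument: set $\tau^{**}=\sup\{\tau>0:\tau\bar u\le\underline u\}$, show $\tau^{**}\ge1$ by the same contact-point computation using $\partial_2 g<0$, and conclude $\bar u\le\underline u$, which closes the sandwich. (The ``delicate variant'' you flag, where everything is proportional, is not an obstruction: the reaction term alone already supplies strict positivity at $x_0$.) Second, ``compactness of $\mathcal K_1$'' is not by itself the mechanism for upgrading the monotone pointwise limits to limits in $X_1$. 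Rather, pass to the limit in the evolution equation to get $\bar u(x)\bigl[\nu-g(x,\bar u(x))\bigr]=\nu(\mathcal K_1\bar u)(x)$; the right side is continuous, and $w\mapsto w[\nu-g(x,w)]$ is strictly increasing on the relevant range (since $\nu-g(x,w)\ge0$ there and $\partial_2 g<0$), so $\bar u$ is continuous, and Dini's theorem then gives uniform convergence. With these two repairs the proof is complete.
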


\begin{proof}
It follows from \cite[Theorem E]{RaSh}.
\end{proof}

\begin{proof}[Proof of Theorem \ref{competition-system-thm}]
(1) By $\tilde \lambda_1(\nu,f(\cdot,0))>0$ and Lemma \ref{dirichlet-neumann-bc-lm}, we  have $\tilde \lambda_2(\nu,f(\cdot,0))>0$.
Then by Lemma \ref{single-eq-lm},
 there are $u^*\in X_1^{++}$ and $v^*\in X_2^{++}$ such that
$(u^*,0)$ and $(0,v^*)$ are stationary solutions of \eqref{competition-system-eq}. Moreover,
for any $(u_0,v_0)\in X_1^+\times X_2^+$ with $u_0\not = 0$ and $v_0=0$ (resp. $u_0=0$ and $v_0\not = 0$),
$(u(t,\cdot;u_0,v_0),v(t,\cdot;u_0,v_0))\to (u^*(\cdot),0)$ (resp.
$(u(t,\cdot;u_0,v_0),v(t,\cdot;u_0,v_0))\to (0,v^*(\cdot))$) as $t\to\infty$.

(2)
Observe that
\begin{equation}
\label{competition-eq1}
\nu\left[\int_D k(y-x)u^*(y)dy-u^*(x)\right]+f(x,u^*(x))u^*(x)=0,\quad x\in\bar D.
\end{equation}
This implies that $\lambda_1(\nu,f(\cdot,u^*(\cdot)))$ exists and $\lambda_1(\nu,f(\cdot,u^*(\cdot)))=0$.
By Lemma \ref{dirichlet-neumann-bc-lm},  we have
$$
\tilde\lambda_2(\nu,f(\cdot,u^*(\cdot)))>0.
$$

By Lemma \ref{technical-lm}, there are $\epsilon>0$ and  $a\in X_1$ such that
$\lambda_2(\nu,a)$ exists,
$$
a(x)\le f(x,u^*(x))-\epsilon,\quad \lambda_2(\nu,a)>0,
$$
and
$$
\tilde\lambda_2(\nu, f(\cdot,u^*(\cdot)+\epsilon))>0.
$$

Let $\phi(\cdot)$ be the positive eigenfunction of $\nu\mathcal{K}_2-\nu b(\cdot)\mathcal{I}+a(\cdot)\mathcal{I}$ with $\|\phi\|=1$,
where $b(x)=\int_D k(y-x)dy$.
Let
$$
u_\delta(x)=u^*(x)+\delta^2\quad {\rm  and}\quad v_\delta(x)=\delta\phi(x).
$$
Then
\begin{align*}
0&=\nu\left[\int_D k(y-x)u^*(y)dy-u^*(x)\right]+u^*(x)f(x,u^*(x))\\
&=\nu\left[\int_D k(y-x)u_\delta(y)dy-u_\delta(x)\right]+u_\delta(x) f(x, u_\delta(x)+v_\delta(x))\\
&\quad +\nu\delta^2 \left(1-\int_D k(y-x)dy\right)-\delta^2 f(x,u^*(x))\\
&\quad +u_\delta \left[f(x,u^*(x))-f(x,u_\delta(x)+v_\delta(x))\right]\\
&\ge \nu\left[\int_D k(y-x)u_\delta(y)dy-u_\delta(x)\right]+u_\delta(x) f(x, u_\delta(x)+v_\delta(x))
\end{align*}
for $0<\delta\ll 1$, and
\begin{align*}
0&\le \lambda_2(\nu,a)v_\delta(x)\\
&=\nu\int_D k(y-x)[v_\delta(y)-v_\delta(x)]dy+a(x)v_\delta(x)\\
&\le \nu \int_D k(y-x)[v_\delta(y)-v_\delta(x)]dy +[f(x,u^*(x))-\epsilon]v_\delta(x)\\
&=\nu \int_D k(y-x)[v_\delta(y)-v_\delta(x)]dy + v_\delta(x) f(x,u_\delta(x)+v_\delta(x))\\
&\quad +v_\delta(x)\left [f(x,u^*(x))-f(x,u_\delta(x)+v_\delta(x))-\epsilon\right]\\
&\le \nu \int_D k(y-x)[v_\delta(y)-v_\delta(x)]dy + v_\delta(x) f(x,u_\delta(x)+v_\delta(x))
\end{align*}
for $0<\delta\ll 1$. It then follows that for $0<\delta\ll 1$, $(u_\delta(x),v_\delta(x))$ is a super-solution of
\eqref{competition-system-eq}.
By Proposition \ref{comparison-system-prop},
\begin{equation}
\label{monotone-eq}
(u(t_2,\cdot;u_\delta,v_\delta),v(t_2,\cdot;u_\delta,v_\delta))\leq_2 (u(t_1,\cdot;u_\delta,v_\delta),v(t_1,\cdot;u_\delta,v_\delta))\quad \forall\,\, 0<t_1<t_2.
\end{equation}
Let
$$
(u_\delta^{**}(x),v_\delta^{**}(x))=\lim_{t\to\infty} (u(t,x;u_\delta,v_\delta),v(t,x;u_\delta,v_\delta)) \quad \forall\,\, x\in\bar D
$$
(this pointwise limit exists because of \eqref{monotone-eq}).

We claim that $(u_\delta^{**}(\cdot),v_\delta^{**}(\cdot))=(0,v^*(\cdot))$. Observe that  $u_\delta^{**}(\cdot)$ and $v_\delta^{**}(\cdot)$ are semi-continuous and
$(u_\delta^{**}(\cdot), v_\delta^{**}(\cdot))$ satisfies that
\begin{equation}
\label{competition-system-star-eq}
\begin{cases}
\nu[\int_D k(y-x)u_\delta^{**}(y)dy-u_\delta^{**}(x)]+u_\delta^{**}(x)f(x,u_\delta^{**}(x)+v_\delta^{**}(x))=0,\quad x\in\bar D,\cr
\nu \int_D k(y-x)[v_\delta^{**}(y)-v_\delta^{**}(x)]dy+v_\delta^{**}(x) f(x,u_\delta^{**}(x)+v_\delta^{**}(x))=0,\quad x\in\bar D
\end{cases}
\end{equation}
(see the arguments in \cite[Theorem A]{HeNgSh}). Multiplying the first equation in \eqref{competition-system-star-eq} by $v_\delta^{**}(x)$,
second equation by $u_\delta^{**}(x)$, and integrating over $D$, we have
$$
\int_D u_\delta^{**}(x)v_\delta^{**}(x)dx=\int_D \left(\int_D k(y-x)dy\right)u_\delta^{**}(x)v_\delta^{**}(x)dx.
$$
This together with $v_\delta^{**}(x)\ge \delta\phi(x)>0$  implies that
$$
\left[1-\int_D k(y-x)dy\right]u_\delta^{**}(x)=0\quad \forall\,\, x\in\bar D.
$$
Note that $\int_D k(y-x)dy<1$ for $x$ near $\p D$. This together with the first equation in \eqref{competition-system-star-eq}
implies that $u_\delta^{**}(x)=0$ for all $x\in\bar D$. We then must have $v_\delta^{**}(x)=v^*(x)$ for all $x\in \bar D$.
Moreover, by \eqref{monotone-eq} and Dini's theorem,
\begin{equation}
\label{limit-behavior-eq}
\lim_{t\to\infty} (u(t,\cdot;u_\delta,v_\delta),v(t,\cdot;u_\delta,v_\delta))=(0,v^*(\cdot)) \quad {\rm in}\quad X_1\times X_2.
\end{equation}

Now, for any $(u_0,v_0)\in (X_1^+\setminus\{0\})\times (X_2^+\setminus\{0\})$, there is $M_0>0$ such that
$$
(u_0,v_0)\le_2 (M,0).
$$
Then by Proposition \ref{comparison-system-prop},
$$
(u(t,\cdot;u_0,v_0),v(t,\cdot;u_0,v_0))\le_2(u(t,\cdot;M,0),v(t,\cdot;M,0))\quad \forall\,\, t>0.
$$
Since $(u(t,\cdot;M,0),v(t,\cdot;M,0))\to (u^*(\cdot),0)$ in $X_1\times X_2$ for $0<\delta\ll 1$, there is $T>0$ such that
$$
(u(t,\cdot;u_0,v_0),v(t,\cdot;u_0,v_0))\le_2 (u_\delta(\cdot),0)\quad \forall\,\, t\ge T.
$$
Then $v(t,\cdot;u_0,v_0)$ satisfies
$$
v_t(t,x)\ge \nu\int_Dk(y-x)[v(t,y)-v(t,x)]dy+ v(t,x)f(x,u^*(x)+\epsilon+v(t,x))
$$
for $t\ge T$. Note that $\tilde\lambda_2(\nu,f(\cdot,u^*(\cdot)+\epsilon))>0$. By Lemma \ref{single-eq-lm}, for $0<\delta\ll 1$,
there is $\tilde T\ge T$ such that
$$
v(t,\cdot;u_0,v_0)\ge v_\delta(\cdot)\quad \forall\,\, t\ge 0.
$$
We then have
$$
(u(t+\tilde T,\cdot;u_0,v_0),v(t+\tilde T,\cdot;u_0,v_0))\le_2 (u(t,\cdot;u_\delta,v_\delta),v(t,\cdot;u_\delta,v_\delta))\quad \forall  \,\, t\ge 0.
$$
By \eqref{limit-behavior-eq},
$$
\lim_{t\to\infty}(u(t,\cdot;u_0,v_0),v(t,\cdot;u_0,v_0))=(0,v^*(\cdot)).
$$
The theorem is thus proved.
\end{proof}

\end{document}